\newcommand{\R}{\mathbb R}
\newcommand{\N}{\mathbb N}
\def\norm#1{\|#1\|}
\newcommand\blfootnote[1]{%
	\begingroup
	\renewcommand\thefootnote{}\footnote{#1}%
	\addtocounter{footnote}{-1}%
	\endgroup
}
\numberwithin{equation}{section}
\newtheorem{theorem}{Theorem}[section]
\newtheorem{proposition}{Proposition}[section]
\newtheorem{remark}{Remark}[section]
\newtheorem{lemma}{Lemma}[section]
\newtheorem{corollary}{Corollary}[section]
\newtheorem{remarks}{Remarks}[section]
\begin{document}
\title{The cubic nonlinear fractional Schr\"odinger equation on the half-line}
\author{M\'arcio Cavalcante  
	and Gerardo Huaroto }
\address{\emph{Instituto de Matem\'{a}tica, Universidade Federal de Alagoas (UFAL), Macei\'o (AL), Brazil}}
\email{marcio.melo@im.ufal.br}
\address{\emph{Instituto de Matem\'{a}tica, Universidade Federal de Alagoas (UFAL), Macei\'o (AL), Brazil}}

\email{gerardo.cardenas@im.ufal.br}

\date{}

\maketitle

\blfootnote{Keywords: cubic nonlinear fractional Schr\"odinger equation, half-line, local well-posedness, smoother}
%%%%%%%%%%%%%%%%%%%%%%%%%%%%%%%%%%%%%%%%%%%%%%%%%%%%%%%%%%%%%%%%%%%%%%%%%%%%
\begin{abstract}
We study the cubic nonlinear
fractional  Schr\"odinger  equation with L\'evy indices $\frac{4}{3}<\alpha< 2$ posed on the half-line. More precisely,  we define the notion of a solution for this model and we obtain a result of local-well-posedness almost sharp with respect for known results on the full real line $\mathbb R$. Also, we prove for the same model
that the solution of the  nonlinear part  is smoother than the initial
data. To get our results we use the Colliander and Kenig approach based in the Riemann--Liouville fractional operator combined with Fourier restriction method of Bourgain \cite{Bourgain3} and some ideas of the recent work of Erdogan, Gurel and Tzirakis \cite{tzirakis2}. The method applies to both focusing and defocusing nonlinearities. As the consequence of our analysis we prove a smothing effect for the cubic nonlinear fractional Schr\"odinger equation posed in full line $\mathbb R$ for the case of the low regularity assumption,  which was point out at the recent work \cite{tzirakis2}.

\end{abstract}
%%%%%%%%%%%%%%%%%%%%%%%%%%%%%%%%%%%%%%%%%%%%%%%%%%%%%%%%%%%%%%%%%%%%%%%%%%%%

\maketitle

%%%%%%%%%%%%%%%%%%%%%%%%%
\section{Introduction}
\label{INTRO}

\subsection{Presentation of the model} The one dimensional fractional cubic nonlinear Schr\"odinger equation
$$i\partial_tu(x,t)+(-\Delta)^{\alpha/2}u(x,t) =\lambda|u(x,t)|^2u(x,t),\ x,t\in\R$$
 was introduced in the
theory of the fractional quantum mechanics where the Feynmann path integrals
approach is generalized to $\alpha$-stable L\'evy process \cite{Laskin}. Also it appears in the water
wave models (for example, see \cite{Ionescu} and references therein).

In the  mathematical contex is more studied the folowing initial value problem (IVP) associated to the fractional cubic NLS equation
\begin{equation}\label{frac}
\begin{cases}
i\partial_tu(x,t)+(-\Delta)^{\alpha/2}u(x,t) =\lambda|u(x,t)|^2u(x,t),& (x,t)\in\mathbb{R}\times I,
\\[5pt]
u(x,0)=u_0(x),& x\in I,
\end{cases}
\end{equation}
where $I=\R$ or $I=\mathbb{T}$.

For $s > 1/2$, the Sobolev embedding and the energy method one
can easily show the local well-posedness in $H^s(\R)$ for $1 < \alpha < 2$. For less regular initial data, i.e. $s<\frac12$, the local well-posedness for the fractional NLS on the real line was recently studied by Cho et al.
\cite{Cho}. The authors showed that the equation is locally well-posed in $H^{s}(\R)$, for $s\geq\frac{2-\alpha}{4}$.
They also proved that the solution operator fails to be uniformly continuous in time for $s<\frac{2-\alpha}{4}$.

More recently, for the IVP \eqref{frac} Erdogan, Gurel and Tzirakis \cite{tzirakis2} proved the nonlinear part of the solution is smoother than the initial
data. More precisely, they obtained the following smoothing effect result:
\begin{theorem}\label{theorem0}
Consider the IVP \eqref{frac} on $\mathbb{R}$. Fix $\alpha \in (1,2)$. For any $s>\frac12$ and $a<2\alpha-1$, we have that the solution of \eqref{frac}  satisfies 
\begin{equation}
u(t)-e^{it(-\Delta)^{\alpha/2}}u_0\in H^{s+a}(\R)
\end{equation} and
\begin{equation*}
\begin{split}
\|u(t)-e^{it(-\Delta)^{\alpha/2}}u_0\|_{H^{s+a}(\R)}\lesssim& \|u_0\|_{H^s(\R)}\left(1+\|u_0\|_{H^{1/2^+}(\R)}^2\right)\\
&\quad +\|u(t)\|_{H^s(\R)}\|u(t)\|_{H^{1/2^+}(\R)}^2\\
&\quad +\int_0^t \|u(t')\|_{H^s(\R)}\left(\|u(t')\|_{H^{1/2+}(\R)}^2+\|u(t')\|_{H^{1/2+}(\R)}^4\right)dt'
\end{split}
\end{equation*}
for all $t$ in the maximal interval of existence. 

In particular, for $s\geq \alpha$ and $a<2\alpha-1$ we have
\begin{equation}
\|u(t)-e^{it(-\Delta)^{\alpha/2}}u_0\|_{H^{s+a}(\R)}\lesssim \|u_0\|_{H^s(\R)}+\|u(t)\|_{H^s(\R)}+\int_0^t\|u(t')\|_{H^s(\R)}dt'
\end{equation}
for all $t$.
	\end{theorem}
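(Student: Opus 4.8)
The plan is to reduce the estimate to a single trilinear smoothing bound obtained by the \emph{normal form} (differentiation by parts in time) technique, which the very shape of the claimed right-hand side—two boundary contributions evaluated at the endpoints $0$ and $t$, plus a time integral of a cubic and a quintic expression—strongly signals. First I would pass to the interaction representation $v(t):=e^{-it(-\Delta)^{\alpha/2}}u(t)$. Since $e^{it(-\Delta)^{\alpha/2}}$ is unitary on every $H^{\sigma}(\R)$, one has
\[
\|u(t)-e^{it(-\Delta)^{\alpha/2}}u_0\|_{H^{s+a}(\R)}=\|v(t)-v(0)\|_{H^{s+a}(\R)},
\]
so it suffices to control $v(t)-v(0)$. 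On the Fourier side $\widehat v(\xi,t)$ satisfies
\[
\partial_t\widehat v(\xi,t)=-i\lambda\int_{\xi=\xi_1-\xi_2+\xi_3}e^{it\Phi}\,\widehat v(\xi_1,t)\overline{\widehat v(\xi_2,t)}\,\widehat v(\xi_3,t)\,d\xi_1\,d\xi_2,
\]
with resonant phase $\Phi=|\xi_1|^{\alpha}-|\xi_2|^{\alpha}+|\xi_3|^{\alpha}-|\xi|^{\alpha}$, and integrating from $0$ to $t$ realizes $v(t)-v(0)$ as one oscillatory cubic integral.

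Next I would split the frequency domain into a resonant region, where $|\Phi|$ is small, and a non-resonant region, where $|\Phi|$ is bounded below. On the non-resonant part I would use $e^{it'\Phi}=\frac{1}{i\Phi}\partial_{t'}e^{it'\Phi}$ and integrate by parts in time. This produces exactly three kinds of terms. The two boundary terms at $t'=0$ and $t'=t$, after returning to $u$ and using the lower bound on $|\Phi|$ to absorb the extra weight $\langle\xi\rangle^{a}$, yield the contributions $\|u_0\|_{H^s}(1+\|u_0\|_{H^{1/2^+}}^2)$ and $\|u(t)\|_{H^s}\|u(t)\|_{H^{1/2^+}}^2$. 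The interior term, in which $\partial_{t'}$ falls on one of the three factors $\widehat v(\xi_j)$, splits further: leaving $\partial_{t'}$ on a factor and estimating directly gives the $\|u\|_{H^s}\|u\|_{H^{1/2^+}}^2$ part of the time integral, while substituting the cubic evolution equation for $\partial_{t'}\widehat v$ turns that factor into a further cubic expression, producing the genuinely quintic integrand responsible for the $\|u\|_{H^s}\|u\|_{H^{1/2^+}}^4$ part.

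The decisive analytic input is a pointwise lower bound on $|\Phi|$ in terms of the frequencies, exploiting the strict convexity of $t\mapsto|t|^{\alpha}$ for $\alpha<2$: grouping $(\xi,\xi_2)$ and $(\xi_1,\xi_3)$, which share the common sum $\xi_1+\xi_3=\xi+\xi_2$, one writes $\Phi$ as a difference of values of a strictly convex even profile and applies the mean value theorem, obtaining a bound of the form $|\Phi|\gtrsim\langle\xi_{\max}\rangle^{\alpha-1}|\xi-\xi_1|$ away from the diagonal resonant set. One then verifies that the multiplier
\[
m=\frac{\langle\xi\rangle^{s+a}}{|\Phi|\,\langle\xi_1\rangle^{s}\langle\xi_2\rangle^{1/2^+}\langle\xi_3\rangle^{1/2^+}},
\]
in which the full Sobolev weight is assigned to the largest frequency and only $\langle\cdot\rangle^{1/2^+}$ to the other two (this is precisely what the $H^{1/2^+}$ factors in the statement encode), gives a convergent dyadic sum over all frequency interactions; the borderline summation over the two comparable high frequencies is exactly what forces $s>1/2$ and determines the admissible range of $a$ in the stated form. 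The resonant region, where $\Phi\approx0$ pins an algebraic relation among $\xi_1,\xi_2,\xi_3$, is handled by a direct $L^2$/bilinear argument using the two $H^{1/2^+}$ factors and the embedding $H^{1/2^+}(\R)\hookrightarrow L^\infty(\R)$.

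The main obstacle is precisely the uniform, clean lower bound on $|\Phi|$ for the non-smooth symbol $|\xi|^{\alpha}$: the mean value estimate degenerates near the origin and along the near-cancellation configurations of the convolution constraint $\xi=\xi_1-\xi_2+\xi_3$, so the critical high--low interactions must be analyzed with care, and the resonant set must be shown to contribute only unsmoothed but summable terms. Once the trilinear (and the resulting quintic) multiplier bounds are established in the full range, the general estimate follows by assembling the boundary and interior contributions, and the simplified bound for $s\ge\alpha$ is then immediate, since $s\ge\alpha>1>1/2^+$ allows one to dominate every $H^{1/2^+}$ norm by the $H^s$ norm.
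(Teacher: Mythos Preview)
The theorem you are attempting to prove is not proved in this paper at all: it is quoted verbatim from Erdo\u{g}an--G\"urel--Tzirakis \cite{tzirakis2} as background (see the sentence introducing Theorem~\ref{theorem0}). There is therefore no ``paper's own proof'' to compare against. What the present paper actually proves is the complementary low-regularity statement, Corollary~\ref{theorem2} (the range $\tfrac{2-\alpha}{4}<s<\tfrac12$), and it does so by a \emph{different} mechanism: a trilinear $X^{s,b}$ estimate (Proposition~\ref{trilinear}) combined with the standard Duhamel bounds in Bourgain spaces, not by a normal-form/differentiation-by-parts argument.

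That said, your plan is essentially the strategy used in \cite{tzirakis2}, and the structure you read off from the right-hand side (boundary trilinear terms at $t'=0,t$ plus a time integral containing a quintic contribution) is correct. Two points would need sharpening before this becomes a proof. First, the phase lower bound you quote, $|\Phi|\gtrsim \langle\xi_{\max}\rangle^{\alpha-1}|\xi-\xi_1|$, is not the right form: the relevant algebraic inequality (stated in this paper as Lemma~\ref{Lemma3}) is
\[
\big||\xi|^{\alpha}-|\xi_1|^{\alpha}+|\xi_2|^{\alpha}-|\xi-\xi_1+\xi_2|^{\alpha}\big|\ \gtrsim\ \frac{|\xi-\xi_1|\,|\xi_1-\xi_2|}{(|\xi-\xi_1|+|\xi_1-\xi_2|+|\xi|)^{2-\alpha}},
\]
so both difference factors appear and the exponent is $-(2-\alpha)$; your version captures only one frequency regime and would not by itself yield the full range $a<2\alpha-1$. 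Second, the cubic contribution $\|u(t')\|_{H^s}\|u(t')\|_{H^{1/2^+}}^2$ inside the time integral does not come from ``leaving $\partial_{t'}$ on a factor'' after integration by parts (that term is always quintic once you substitute the equation for $\partial_{t'}\widehat v$); it comes from the \emph{resonant} piece that you estimate directly without integrating by parts. With those corrections, the outline matches the argument in \cite{tzirakis2}.
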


The  smoothing results of this type were first obtained by Linares and Scialon \cite{Linares} for the generalized Korteweg-de Vries equation on the  line $\R$. After Bourgain \cite{Bourgain2} obtained  for the cubic NLS on the $\R^2$. A generalization of this result in $\R^n$ was obtained by Keraani, and Vargas \cite{ker}, and \cite{Compaan} for the cubic NLS on $\R$. Also, Erdogan and Tzirakis \cite{ET} obtained a gain of the regularity for the classical NLS on the torus. Finally, important contributions for the models posed on the half-line was given by Tzirakis et. al (\cite{CT,tzirakis,ET,ET2}).

\subsection{Setting of the problem} In this work, we study the following initial boundary value problem  (IBVP) on the positive half-line
\begin{equation}\label{frach}
\begin{cases}
i\partial_tu(x,t)+(-\Delta)^{\alpha/2}u(x,t) =\lambda|u(x,t)|^2u(x,t),& (x,t)\in\mathbb{R}^+\times(0,T),
\\[5pt]
u(x,0)=u_0(x),& x\in\mathbb{R}^+,\\
u(0,t)=f(t),& t\in (0,T),
\end{cases}
\end{equation}
 where the nonlocal operator $(-\Delta)^{\alpha/2}$,  is defined by
 \begin{equation}
 (-\Delta)^{\alpha/2}v(x)=\int_{\R} e^{ix \xi} |\xi|^{\alpha} \hat{\widetilde{v}}(\xi)d\xi\ 
 \end{equation}
 and $$\widetilde v=\begin{cases}
 v(x),& \text{for}\ x\geq 0,\\
 v(-x),& \text{for}\  x<0.
 \end{cases}$$

It is well-known by Kenig, Ponce and Vega \cite{KPV} that the local smoothing effect for the free linear $e^{it(-\Delta)^{\alpha/2}}$
group operator that solves the associated for the linear fractional Schr\"odinger equation $$i\partial_tu(x,t)+(-\Delta)^{\alpha/2}u(x,t)=0$$
on the all line $\R$, 
$$
\Vert \varphi(t) e^{it(-\Delta)^{\alpha/2}}\phi(x)\Vert_{C(\R_x;H^{\frac{2s-1+\alpha}{2\alpha}}(\R_t))}\leq c \Vert \phi \Vert_{H^s(\R)}.
$$

This motivates the study of IBVP \eqref{frach} in the following setting
\begin{equation}\label{initial}
u_0\in H^s(\R^+)\ \text{and}\ f \in H^{\frac{2s-1+\alpha}{2\alpha}}(\R^+)
\end{equation}

As far as we know this problem never was studied on the half-line. Thus, in this work, we are interested in the following questions for the IBVP \eqref{frach}:

\vspace{0.2cm}
\begin{itemize}
\item \noindent \textit{Is the IBVP \eqref{frach} local well-posedness  in the low regularity Sobolev space?}

\item \textit{Is there some smoothing effect for the IBVP \eqref{frach} similar of the IVP \eqref{frac} context of full line?}
\end{itemize}

Also, as a consequence of the study of these two questions, we can solve the following question for the IVP posed in full line:

\begin{itemize}
	\item \noindent \textit{Does hold  the result of Theorem \ref{theorem0}  for more low regularities?}
\end{itemize}

\vspace{0.2cm}

Thus, now we are able to present the main goal in this paper: to answer these questions, that is, to show  local well-posedness of \eqref{frach} in the low regularity Sobolev spaces, more precisely in $H^s(\R^+)$, for $\frac{2-\alpha}{4}< s<\frac{\alpha-1}{2}$.

We state the main theorem for IBVP \eqref{frach} as follows.
\begin{theorem}\label{theorem1}
Fix $\alpha\in(\frac43,2)$	Let $s \in (\frac{2-\alpha}{4},\frac{\alpha-1}{2})$. For given initial-boundary data $u_0$ and $f$ satisfying \eqref{initial} there exist a positive time $T:=T\left(\|u_0\|_{H^s(\mathbb{R}^+)},\|f\|_{H^{\frac{2s-1+\alpha}{2\alpha}}(\mathbb{R}^+)}\right)$ and a unique solution $u(x,t) \in C((0 , T);H^s(\R^+))$ of the IBVP \eqref{frach}, satisfying
	\[u \in C\bigl(\mathbb{R}^+;\; H^{\frac{2s-1+\alpha}{2\alpha}}(0,T)\bigr) \cap X^{s,b}((0,T) \times \R^+),\]
	for some $b(s) < \frac12$.  Moreover, the map $(u_0,f)\mapsto u$ is analytic from $H^s(\mathbb{R}^+)\times H^{\frac{2s-1+\alpha}{2\alpha}}(\mathbb{R}^+)$ to  $C\big((0,T);\,H^s(\mathbb{R}^+)\big)$.
	
	 Moreover, for $a<\text{min}\{\frac{\alpha-1}{2},\frac{4s+\alpha-2}{2}\}$ holds
	 \begin{equation}\label{regularity}
	u(x,t)-L_{u_0,f}(x,t)\in H^{s+a}(\R^+),
	\end{equation} 
	for all $t$ in $(0,T)$, where $L_{u_0,f}(x,t)$ denotes the solution  of the corresponding linear IBVP \eqref{frach} with $\lambda=0$  \eqref{frach}.
\end{theorem}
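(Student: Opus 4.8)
The plan is to recast the IBVP \eqref{frach} as an integral equation on the whole line and to solve it by a contraction argument in Bourgain spaces $X^{s,b}$, following the Riemann--Liouville/boundary-forcing strategy of Colliander--Kenig. First I would fix an extension $U_0\in H^s(\R)$ of $u_0$ and a smooth compactly supported time cut-off $\psi$ with $\psi\equiv 1$ on $[0,T]$, and seek a solution of the form
\begin{equation*}
u(t)=\psi(t)\Big[e^{it(-\Delta)^{\alpha/2}}U_0-i\lambda\int_0^t e^{i(t-t')(-\Delta)^{\alpha/2}}|u|^2u\,(t')\,dt'\Big]+\psi(t)\,\mathcal{L}\big(h\big)(t),
\end{equation*}
where $\mathcal{L}$ is the boundary forcing operator built from the Riemann--Liouville fractional integral and the auxiliary datum $h$ is fixed by imposing that the spatial trace at $x=0$ equal $f$ minus the traces of the first two terms. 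The point of this reformulation is that $\mathcal{L}$ produces, by construction, a function whose restriction to $\R^+$ solves the linear equation with zero initial data and the prescribed boundary value, so that a fixed point of the map restricts to a genuine solution of \eqref{frach}.

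Second, I would assemble the linear estimates needed to close the argument, all in the range $b=b(s)<\tfrac12$ forced by the low regularity $s<\tfrac12$: the standard group and Duhamel bounds in $X^{s,b}$, the Kato-type spatial-trace estimate placing the free evolution in $C(\R_x;H^{(2s-1+\alpha)/(2\alpha)}_t)$ (precisely the local smoothing of Kenig--Ponce--Vega recalled in the introduction), and the boundary-operator estimates for $\mathcal{L}$ in both $X^{s,b}$ and the time-trace space. Since the nonlinearity is a cubic polynomial, the solution map is analytic, so the analytic dependence statement comes for free once the fixed point is produced.

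The analytic heart of the proof, and the step I expect to be the main obstacle, is the trilinear estimate
\begin{equation*}
\big\||u|^2u\big\|_{X^{s,b-1}}\lesssim \|u\|_{X^{s,b}}^{3},
\end{equation*}
valid for $s\in(\tfrac{2-\alpha}{4},\tfrac{\alpha-1}{2})$ and some $b<\tfrac12$. The difficulty is twofold: working below $b=\tfrac12$ removes the comfortable embedding $X^{s,b}\hookrightarrow C_tH^s_x$ and forces a careful use of the modulation weights, while the fractional dispersion $|\xi|^\alpha$ with $\alpha$ close to $\tfrac43$ makes the resonance function $\tau-|\xi|^\alpha$ and the associated frequency interactions delicate. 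I would treat this by dyadic decomposition in modulation and frequency, separating resonant from non-resonant regions and bounding each by Plancherel together with the dispersive lower bounds on the resonance function. With this estimate and the linear bounds, the contraction on a small ball of $X^{s,b}\cap C(\R^+;H^{(2s-1+\alpha)/(2\alpha)}(0,T))$ follows for $T$ small depending on the data norms, yielding existence and uniqueness.

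Finally, for the smoothing statement \eqref{regularity} I would observe that $u-L_{u_0,f}$ is exactly the Duhamel term $-i\lambda\int_0^t e^{i(t-t')(-\Delta)^{\alpha/2}}|u|^2u\,dt'$ (the boundary contribution being absorbed into $L_{u_0,f}$), and prove the refined bound
\begin{equation*}
\Big\|\int_0^t e^{i(t-t')(-\Delta)^{\alpha/2}}|u|^2u\,dt'\Big\|_{H^{s+a}(\R^+)}\lesssim \|u\|_{X^{s,b}}^{3}
\end{equation*}
for $a<\min\{\tfrac{\alpha-1}{2},\tfrac{4s+\alpha-2}{2}\}$, in the spirit of Erdogan--Gurel--Tzirakis. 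The gain of $a$ derivatives is extracted from the same resonance function: on the non-resonant region the large modulation can be traded for extra spatial derivatives, and the two competing bounds defining the admissible $a$ reflect respectively the intrinsic smoothing available from the fractional dispersion and the room left in the trilinear estimate at regularity $s$. I expect the constraint $\tfrac{4s+\alpha-2}{2}$ to be the binding one for $s$ near $\tfrac{2-\alpha}{4}$, and verifying the estimate uniformly up to this threshold to demand the most careful bookkeeping.
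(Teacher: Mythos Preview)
Your outline matches the paper's approach closely: extend the data to $\R$, use the Colliander--Kenig boundary forcing operator $\mathcal{L}$ to enforce the trace at $x=0$, and run a contraction in the space $C_tH^s_x\cap C_xH^{(2s-1+\alpha)/(2\alpha)}_t\cap X^{s,b}$ with $b<\tfrac12$, driven by a cubic Bourgain-space estimate; the smoothing follows from the same trilinear estimate with a built-in gain of $a$ derivatives.

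Two technical points need adjusting. First, writing the trilinear bound as $\||u|^2u\|_{X^{s,b-1}}\lesssim\|u\|_{X^{s,b}}^3$ will not close the loop here: since $b<\tfrac12$ forces $b-1<-\tfrac12$, the Duhamel estimate $\|\psi\mathcal{D}w\|_{X^{s,b}}\lesssim\|w\|_{X^{s,c}}$ is only available for $-\tfrac12<c\le 0$, so the correct target space is $X^{s,-b}$ (or $X^{s+a,-b}$ for the smoothing), not $X^{s,b-1}$. The paper proves precisely $\||u|^2u\|_{X^{s+a,-b}}\lesssim\|u\|_{X^{s,b}}^3$ for $\tfrac12-\epsilon<b<\tfrac12$, and this stronger exponent is where the work lies. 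Second, your description of $u-L_{u_0,f}$ as ``exactly the Duhamel term'' is not quite right: $L_{u_0,f}$ solves the \emph{linear} IBVP with boundary datum $f$, so the difference is
\[
u-L_{u_0,f}=\psi\,\mathcal{D}(\psi_T|u|^2u)\;-\;\psi\,\mathcal{L}\big(\psi\,\mathcal{D}(\psi_T|u|^2u)\big|_{x=0}\big),
\]
i.e.\ there is an additional boundary-forcing correction of the Duhamel trace that must also be estimated in $H^{s+a}$. This is harmless because the space-trace bound for $\mathcal{L}$ holds up to $s<\tfrac{2\alpha-1}{2}$, which accommodates $s+a$ under your hypothesis on $a$, but it does need to be accounted for.
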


Also, as the consequence of the proofs of  Theorem \ref{theorem1}, we have the following result that completes in some sense the recent result of Erdogan, Gurel and Tzirakis \cite{tzirakis2}  described in Theorem \ref{theorem0}.  
\begin{corollary}\label{theorem2}
	Consider the IVP \eqref{frac} on $\mathbb{R}$. Fix $\alpha \in (1,2)$. For any $\frac{2-\alpha}{4}<s<\frac{1}{2}$ and $a<2\alpha-1$, we have that the solution of \eqref{frac}  satisfies 
	\begin{equation}
	u(t)-e^{it(-\Delta)^{\alpha/2}}u_0\in H^{s+a}(\R)
	\end{equation}
	for all $t$ in the maximal interval of existence, where $e^{it(-\Delta)^{\alpha/2}}$ denotes the linear group that solves the linearized IVP \eqref{frac} with $\lambda=0$ . 
	\end{corollary}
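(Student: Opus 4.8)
The plan is to isolate the nonlinear part of the solution and to show that, even though the data only lives in $H^s(\R)$ with $s<\frac12$, this part gains $2\alpha-1$ derivatives. Writing Duhamel's formula for \eqref{frac} on $\R$,
\begin{equation*}
u(t)-e^{it(-\Delta)^{\alpha/2}}u_0=-i\lambda\int_0^t e^{i(t-t')(-\Delta)^{\alpha/2}}\bigl(|u|^2u\bigr)(t')\,dt',
\end{equation*}
so the statement reduces to proving that the right-hand side belongs to $H^{s+a}(\R)$ for every $a<2\alpha-1$. The mechanism is exactly the one already isolated in the proof of Theorem \ref{theorem1}: on the full line there is no boundary forcing term and no Riemann--Liouville extension, so the smoothing is governed purely by the interior cubic interaction and is therefore not capped by the quantity $\min\{\frac{\alpha-1}{2},\frac{4s+\alpha-2}{2}\}$ that limits the half-line gain.

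First I would recall that, by the local theory of Cho et al.\ \cite{Cho}, for $s\geq\frac{2-\alpha}{4}$ the solution $u$ belongs to a Bourgain space $X^{s,b}$ on a small interval $(0,T)$, with $b<\frac12$ close to $\frac12$; this is the same functional setting already used for Theorem \ref{theorem1}, including the technical handling of the Duhamel term at $b<\frac12$. I would then reduce the claim to the trilinear smoothing estimate
\begin{equation*}
\bigl\|\,|u|^2u\,\bigr\|_{X^{s+a,\,b-1}}\lesssim \|u\|_{X^{s,b}}^3,
\end{equation*}
valid for $\frac{2-\alpha}{4}<s<\frac12$ and $a<2\alpha-1$, after which the inhomogeneous $X^{s,b}$ estimates for the linear propagator convert this into the desired $H^{s+a}$ bound.

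The heart of the argument, and the step I would develop in detail, is this trilinear estimate. Passing to the space--time Fourier side, the output frequency is $\xi=\xi_1-\xi_2+\xi_3$ and the gain is controlled by the resonance function
\begin{equation*}
\Phi=|\xi|^\alpha-|\xi_1|^\alpha+|\xi_2|^\alpha-|\xi_3|^\alpha,
\end{equation*}
whose modulus one bounds below by a power of $\max_i|\xi_i|$. Splitting the frequency region into the usual interactions (high--high and high--low) and distributing the modulation weights $\langle\tau-|\xi|^\alpha\rangle^{b}$ among the three inputs, the lower bound on $|\Phi|$ can be traded for up to $2\alpha-1$ derivatives on the output. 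Because $b<\frac12$, there is enough room in the modulation variable to sum the dyadic pieces without sacrificing the full gain, which is precisely why the range of $s$ can be pushed below $\frac12$, unlike in Theorem \ref{theorem0}.

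The main obstacle I expect is the near-resonant region, where two of the input frequencies nearly cancel and $|\Phi|$ degenerates: there the naive lower bound on $\Phi$ fails and the gain must instead be recovered from the modulation weights, which is delicate at the low-regularity endpoints $s\to\frac{2-\alpha}{4}$ and $a\to 2\alpha-1$. Handling this region requires a careful case analysis together with a sharp summation over dyadic blocks; away from it the estimate is comparatively routine. Once the trilinear bound is established for all $\frac{2-\alpha}{4}<s<\frac12$, the corollary follows by inserting it into the Duhamel representation above, exactly as in the interior estimates already carried out for Theorem \ref{theorem1}.
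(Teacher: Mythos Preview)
Your Duhamel-plus-trilinear outline is the paper's route, but two choices diverge and one conceals a genuine misconception.

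The paper fixes $b>\tfrac12$, not $b<\tfrac12$. The half-line constraint $b<\tfrac12$ in Theorem~\ref{theorem1} was forced solely by the boundary forcing estimate Lemma~\ref{edbf}(c); on $\R$ there is no $\mathcal L$, so one reverts to the standard $b>\tfrac12$, which gives the embedding $X^{s+a,b}\hookrightarrow C_tH^{s+a}_x$ for free. With your $b<\tfrac12$ the exponent $b-1$ drops below $-\tfrac12$, and Lemma~\ref{duhamel}(a) no longer converts $\||u|^2u\|_{X^{s+a,b-1}}$ into a $C_tH^{s+a}_x$ bound; your final step does not close as written. Your remark that ``because $b<\tfrac12$ there is enough room in the modulation variable'' is also backwards: smaller $b$ gives \emph{less} decay in $\langle\tau-|\xi|^\alpha\rangle^{b}$, not more.

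More seriously, you misidentify where the cap $a<\min\{\tfrac{\alpha-1}{2},\tfrac{4s+\alpha-2}{2}\}$ comes from. It is exactly the output of Proposition~\ref{trilinear}, an \emph{interior} trilinear estimate that never sees $\mathcal L$ or the Riemann--Liouville machinery; the restriction on $a$ is produced by the frequency-space case analysis in Lemmas~\ref{Lemma4}--\ref{Lemma5}. So the sentence ``on the full line there is no boundary forcing\ldots and is therefore not capped by $\min\{\ldots\}$'' is self-defeating: the cap arises precisely from the interior cubic interaction you say governs the smoothing. Removing the boundary does not by itself enlarge the admissible $a$. What the paper actually does is take $b>\tfrac12$, run the contraction on $X^{s,b}$, and then chain the embedding $X^{s+a,b}\subset C_tH^{s+a}_x$, Lemma~\ref{duhamel}, and Proposition~\ref{trilinear} to obtain the bound for $a<2\alpha-1$, without proposing any new frequency-space decomposition.
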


	\begin{remarks}Finally, the following comments are now given in order:
		\begin{itemize}
			%\item[1.]Theorem \ref{theorem1} is sharp in the $L^2$-based Sobolev spaces compared with the classical results on real line and the torus.
			\item[1.]The proof of Theorem \ref{theorem1} is based on the Fourier restriction method for a suitable extension of solutions for all line $\R$. We first convert the IBVP of \eqref{frach} posed in $\R^+ \times \R^+$ to a the initial value problem (IVP) (integral equation formula) in the whole space $\R \times \R$ (see Section \ref{boundary}) by using the Duhamel boundary forcing operator of Colliander and Kenig (\cite{CK}). The energy and nonlinear estimates (will be established in Sections \ref{linear}) allow us to apply the Picard iteration method for the extended problem, and hence we can complete the proof.
			\medskip
			
			\item[2.] The news ingredients here are the Duhamel boundary forcing operator for the linearized equation and its analysis  (see Section \ref{boundary}), and the crucial trilinear estimates on the Borgain spaces (see Lemma \ref{trilinear}). This estimates is more complicated if compared with the more known nonlinear estimates  for nonlinear dispersive with integer order, since  for in the estimate of Bourgain spaces associated to fractional NLS equation  does not have a exact resonance relation for the frequencies. Also the need of to work with Bougain spaces $X^{s,b}$ with $b<\frac12$ brings more technical difficulties in the proof of this estimate.

			\item[3.] Note that in context of IVP posed in $\R$ the critical regularity is  $s=\frac{2-\alpha}{4}$, then our results on Theorem \ref{theorem1} in context of half-lines are almost sharp with respect the low regularity index. Also note that the upper bound $s<\frac{\alpha-1}{2}$ occurs because of the estimate for the new boundary forcing operator $\mathcal{L}$ give us this bound. Also the bound $\frac{2-\alpha}{4}<s<\frac{\alpha-1}{2}$, give us the restriction $\frac43<\alpha<2$ in our principal result. 
			
					\medskip
						\item[4.] Corollary \ref{theorem2}, in some sense, completes the  gain of regularity obtained by \cite{tzirakis2} for the IVP \ref{frac} posed on the all line $\R$ for few regular regularities, more precisely for $\frac{2-\alpha}{4}<s<\frac{1}{2}$.
				\medskip	
		%	\item[5.] We decide don't explore here the problem of local well-posedness for IBVP \eqref{frach} on the higher regularities, i.e. $s>\frac12$, since in this case  the result can be obtained by using the fact of the Sobolevs spaces $H^s(\R^+)$ is a Banach algebra. 
		\end{itemize}
	\end{remarks}

\section{Notations and function spaces}\label{notations}
 For $\phi\in S(\mathbb{R})$, we will define the Fourier transform of $\phi$ by  
$
\hat{\phi}(\xi)=\int_\R e^{-i\xi x}\phi(x)dx
$
and the  inverse transform  by
$
\check{\phi}(\xi)=\dfrac{1}{2\pi}\int_\R e^{i\xi x}\phi(x)dx
$.
For $s\in \R$  the fractional Sobolev space is denoted by
$
H^s(\R)=\left\lbrace  u\in \mathcal{S}':\int_\R(1+|\xi|^2)^s|\hat{u}(\xi)|^2d\xi<\infty
\right\rbrace
$
 with norm 
$
\Vert u \Vert_{H^s(\R)}^2=\int_\R(1+|\xi|^2)^s|\hat{u}(\xi)|^2d\xi
$
for all $u\in H^s(\R)$. The homogeneous Sobolev spaces is defined by $
\dot{H}^s(\R)=\left\lbrace  u\in \mathcal{S}':\int_\R|\xi|^{2s}|\hat{u}(\xi)|^2d\xi<\infty
\right\rbrace
$
with norm
$
\Vert u \Vert_{\dot{H}^s(\R)}^2=\int_\R|\xi|^{2s}|\hat{u}(\xi)|^2d\xi
$
for all $u\in \dot{H}^s(\R)$. For $s\geq 0$ define $\phi \in H^s(\mathbb{R}^+)$ if exists $\tilde{\phi}$ such that $\phi(x)=\tilde{\phi}(x)$ for $x>0$, in this case we set $\|\phi\|_{H^s(\mathbb{R}^+)}=\inf_{\tilde{\phi}}\|\tilde{\phi}\|_{H^{s}(\mathbb{R})}$. For $s\geq 0$ define $$H_0^s(\mathbb{R}^+)=\{\phi \in H^{s}(\mathbb{R}^+);\,\text{supp}\ \phi\ \subset[0,\infty) \}.$$  Also define the $C_0^{\infty}(\mathbb{R}^+)=\{\phi\in C^{\infty}(\mathbb{R});\, \text{supp}\ \phi\subset [0,\infty)\}$ and $C_{0,c}^{\infty}(\mathbb{R}^+)$ as those members of $C_0^{\infty}(\mathbb{R}^+)$ with compact support. We remark that $C_{0,c}^{\infty}(\mathbb{R}^+)$ is dense in $H_0^s(\mathbb{R}^+)$ for all $s\in \mathbb{R}$. Throughout the paper, we fix a cutoff function $\psi \in C_0^{\infty}(\mathbb{R})$ such that $\psi(t)=1$ if $t\in[-1,1]$ and supp $\psi\subset [-2,2]$ and $\psi_T(t)=T\psi(t/T)$. For $s,b\in \mathbb{R}$, and $\alpha\in(1,2)$ we introduce the classical Bourgain spaces $X^{s,b}$  related to the fractional Schr\"odinger equation as the completion of $S'(\mathbb{R}^2)$ under the norms
$
\|u\|_{X^{s,b}}=\left(\int\int \langle \xi\rangle^{2s} \langle \tau-|\xi|^{\alpha}\rangle^{2b} |\hat{u}(\xi,\tau)|^2d\xi d\tau\right)^{\frac{1}{2}}
$

\medskip
The following lemma states elementary properties of the Sobolev spaces. For the proofs we refer the reader \cite{CK}.

\begin{lemma}\label{sobolevh0}
	Let $s\in\R$, the we have
	\begin{itemize}
	\item[(i)] For each $f\in H^s(\mathbb{R})$ with $s\in(-\frac{1}{2},\frac{1}{2})$, we have
$$
\|\chi_{(0,+\infty)}f\|_{H^s(\mathbb{R})}\leq c \|f\|_{H^s(\mathbb{R})}.
$$
	\item[(ii)] For each $f\in H^s(\mathbb{R}^+)$, with $s\in(-\frac{1}{2},\frac{1}{2})$ such that supp$ f\subset [0,1] $. Then 
		\begin{equation}
	\|f\|_{\dot{H}_0^s(\mathbb{R}^+)}\sim \|f\|_{H_0^s(\mathbb{R}^+)}.\nonumber
	\end{equation}
	\item[(iii)] For each $f\in  H_0^s(\mathbb{R}^+)$. Then 
	$$
	\|\psi f\|_{H_0^s(\mathbb{R}^+)}\leq c \|f\|_{H_0^s(\mathbb{R}^+)}.
	$$
	\item[(iv)] For each $f\in \dot{H}^s(\mathbb{R})$, with $s\in(0,\frac{1}{2})$
	$$
	\|\psi f\|_{H^s(\mathbb{R})}\leq c \|f\|_{\dot{H}^{s}(\mathbb{R})},
	$$
	where $c$ only depends of $s$ and $\psi$.
	\end{itemize}
	\end{lemma}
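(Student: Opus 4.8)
The plan is to prove each of the four parts directly from the real--variable description of the fractional Sobolev spaces, since each reduces to a classical estimate: the Gagliardo--Slobodeckij seminorm together with Hardy's inequality for the sharp cutoff, the Sobolev embedding $\dot H^s(\R)\hookrightarrow L^{2/(1-2s)}(\R)$ for the homogeneous--inhomogeneous comparison, and the (fractional) Leibniz rule for multiplication by the smooth cutoff $\psi$.

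For (i) I would first dispose of the endpoints: the case $s=0$ is trivial, and the case $s\in(-\tfrac12,0)$ follows from the case $s\in(0,\tfrac12)$ by duality, because $\chi_{(0,\infty)}$ is a real multiplication operator, self--adjoint on $L^2$, so its boundedness on $H^s$ is equivalent to its boundedness on $H^{-s}$. For $s\in(0,\tfrac12)$ I would use the equivalence $\|g\|_{\dot H^s(\R)}^2\sim\iint_{\R\times\R}\frac{|g(x)-g(y)|^2}{|x-y|^{1+2s}}\,dx\,dy$ with $g=\chi_{(0,\infty)}f$ and split the double integral according to the signs of $x$ and $y$. On the same--sign regions the integrand is bounded by that of $f$ (equal to it for $x,y>0$, zero for $x,y<0$), so these contribute $\lesssim\|f\|_{\dot H^s}^2$; on the mixed region $x>0>y$ one has $g(x)-g(y)=f(x)$, and after integrating in $y$ the contribution reduces to $\tfrac{1}{2s}\int_0^\infty |f(x)|^2 x^{-2s}\,dx$, which is controlled by $\|f\|_{\dot H^s}^2$ through Hardy's inequality. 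This last step is exactly where the restriction $s<\tfrac12$ is essential.

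For (ii), when $s\in[0,\tfrac12)$ the reverse inequality $\|f\|_{L^2}\lesssim\|f\|_{\dot H^s}$ follows from $\dot H^s(\R)\hookrightarrow L^{2/(1-2s)}(\R)$ combined with H\"older on the unit interval (using $\operatorname{supp}f\subset[0,1]$), so that $\|f\|_{H^s}\sim\|f\|_{\dot H^s}$; when $s\in(-\tfrac12,0)$ only the low--frequency part $\int_{|\xi|<1}|\xi|^{2s}|\hat f(\xi)|^2\,d\xi$ of the homogeneous norm must be absorbed. Here I would use that $\operatorname{supp}f\subset[0,1]$ forces $f=\eta f$ for a fixed $\eta\in C_c^\infty(\R)$ with $\eta\equiv1$ on $[0,1]$, whence $\hat f=\tfrac{1}{2\pi}\hat\eta\ast\hat f$ and $|\hat f|^2\lesssim|\hat f|^2\ast|\hat\eta|$; since $2s>-1$ makes $|\xi|^{2s}$ locally integrable, integrating this pointwise bound against $|\xi|^{2s}$ over $|\xi|<1$ and using the rapid decay of $\hat\eta$ yields control by $\|f\|_{H^s}^2$. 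Parts (iii) and (iv) are lighter: (iii) follows from the boundedness of multiplication by the fixed $\psi\in C_0^\infty$ on every $H^s(\R)$ (by the fractional Leibniz rule, or integer $s$ plus interpolation and duality) together with the fact that $\psi f$ inherits support in $[0,\infty)$; for (iv) I would split $\|\psi f\|_{H^s}^2=\|\psi f\|_{L^2}^2+\|\psi f\|_{\dot H^s}^2$, bound the first term by $\|\psi\|_{L^{1/s}}\|f\|_{L^{2/(1-2s)}}\lesssim\|f\|_{\dot H^s}$ via Sobolev embedding and H\"older (again using the compact support of $\psi$), and the second by the fractional Leibniz rule.

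The main obstacle is part (i): the boundedness of the sharp cutoff $\chi_{(0,\infty)}$ on $H^s$ is the only genuinely sharp statement, and the range $|s|<\tfrac12$ cannot be improved precisely because the Hardy inequality $\int_0^\infty|f(x)|^2 x^{-2s}\,dx\lesssim\|f\|_{\dot H^s}^2$ degenerates as $s\to\tfrac12$. The secondary difficulty is the negative--regularity case of (ii), where embeddings are unavailable and one must instead exploit the band--limited structure of $\hat f$ coming from the compact support of $f$; the remaining two parts are routine consequences of the Leibniz rule and Sobolev embedding.
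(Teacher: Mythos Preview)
The paper does not actually prove this lemma: immediately before the statement it says ``For the proofs we refer the reader \cite{CK}'', and no argument is given. Your proposal therefore goes well beyond what the paper does, supplying a self-contained proof where the authors simply cite Colliander--Kenig.

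Your arguments are correct and are essentially the standard ones. Part~(i) via the Gagliardo--Slobodeckij double integral plus Hardy's inequality, with duality for $s<0$, is the usual route and exhibits precisely why the range $|s|<\tfrac12$ is sharp. For part~(ii) your split into $s\ge 0$ (Sobolev embedding $\dot H^s\hookrightarrow L^{2/(1-2s)}$ plus H\"older on the unit interval) and $s<0$ (use $f=\eta f$ to write $\hat f=\tfrac{1}{2\pi}\hat\eta*\hat f$, then control the singular low-frequency piece $\int_{|\xi|<1}|\xi|^{2s}|\hat f|^2$ by the rapid decay of $\hat\eta$ and the local integrability of $|\xi|^{2s}$ for $2s>-1$) is clean and correct; note that the reverse inequality $\|f\|_{H^s}\lesssim\|f\|_{\dot H^s}$ is trivial for $s<0$ since $\langle\xi\rangle^{2s}\le|\xi|^{2s}$. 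Parts~(iii) and~(iv) are, as you say, routine: the fractional Leibniz (Kato--Ponce) estimate handles multiplication by $\psi\in C_0^\infty$ on every $H^s$, and in~(iv) the $L^2$ piece is controlled by $\|\psi\|_{L^{1/s}}\|f\|_{L^{2/(1-2s)}}\lesssim_\psi\|f\|_{\dot H^s}$ exactly as you wrote. There is nothing to correct.
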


%\begin{proof}
%	See Lemma 2.8 in \cite{CK}.
%	\end{proof}
%%%%%%%%%%%%%%%%%%%%%%%%%%%%%%%%%%%%%%%%%%%%%%%%%%%%%%%%%%%%%%%%%%%%%%%%%%%%%%%
%%%%%%%%%%%%%%%%%%%%%%%%%%%%%%%%%%%%%%%%%%%%%%%%%%%%%%%%%%%%%%%%%%%%%%%%%%%%%%%
%%%%%%%%%%%%%%%%%%%%%%%%%%%%%%%%%%%%%%%%%%%%%%%%%%%%%%%%%%%%%%%%%%%%%%%%%%%%%%%%

The following lemma concerns Bourgain spaces can be found in \cite{Tao} and \cite{GVT}.
\begin{lemma}\label{gvt2}
	Let $\psi(t)$ be a Schwartz function in time. Then, we have 
	\begin{itemize}
\item[(i)] $\norm{\psi(t)f}_{X^{s,b}} \lesssim_{\psi,b} \norm{f}_{X^{s,b}}$;
\item[(ii)]	$\|\psi_{T}w\|_{ X^{s,b'}}\leq c T^{b-b'}\|w\|_{ X^{s,b}},\ \text{for}\ -\frac{1}{2}< b'< b\leq 0,\ \text{or},\ 0\leq b'<b<\frac{1}{2}$.
	\end{itemize}
\end{lemma}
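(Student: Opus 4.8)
The plan is to peel off the dispersion with the free flow so that both statements become purely one-dimensional (in $t$) multiplier estimates, and then to read part (i) as boundedness of multiplication by a Schwartz function on $H^b(\mathbb{R}_t)$ and part (ii) as a time-localization estimate proved by interpolating between two endpoints. First I would set up the gauge transform: writing $L=(-\Delta)^{\alpha/2}$ and $v(x,t)=e^{-itL}u(x,t)$, the change of variables $\tau\mapsto\tau+|\xi|^{\alpha}$ on the Fourier side gives
\[
\|u\|_{X^{s,b}}=\big\|\langle\xi\rangle^{s}\langle\tau\rangle^{b}\widehat v(\xi,\tau)\big\|_{L^2_{\xi,\tau}}=\|v\|_{H^b(\mathbb{R}_t;\,H^s(\mathbb{R}_x))}.
\]
Since $\psi$ and $\psi_T$ depend only on $t$, they commute with $e^{\pm itL}$, so $e^{-itL}(\psi u)=\psi v$ and likewise for $\psi_T$; hence $\|\psi u\|_{X^{s,b}}=\|\psi v\|_{H^b_tH^s_x}$. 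As the spatial weight $\langle\xi\rangle^{s}$ is untouched by multiplication in $t$, both estimates reduce, uniformly in $\xi$ (i.e.\ after taking $L^2_\xi$), to one-dimensional inequalities for $g(t)=\langle D_x\rangle^{s}v(\cdot,t)$, and this reduction is valid for any dispersion symbol.

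For part (i) it then suffices to prove that multiplication by a Schwartz function is bounded on $H^b(\mathbb{R})$ for every $b\in\mathbb{R}$, namely $\|\psi g\|_{H^b}\lesssim\|g\|_{H^b}$. I would obtain this from Peetre's inequality $\langle\tau\rangle^{b}\lesssim\langle\tau-\sigma\rangle^{|b|}\langle\sigma\rangle^{b}$ combined with Young's inequality: writing $\widehat{\psi g}=\widehat\psi*\widehat g$, Peetre bounds $\langle\tau\rangle^{b}\,|\widehat{\psi g}(\tau)|$ by the convolution of $\langle\cdot\rangle^{|b|}|\widehat\psi|\in L^1$ (since $\widehat\psi$ is Schwartz) with $\langle\cdot\rangle^{b}|\widehat g|\in L^2$.

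For part (ii) the reduction yields the time-localization estimate $\|\psi_T g\|_{H^{b'}}\lesssim T^{b-b'}\|g\|_{H^b}$ for $0\le b'<b<\tfrac12$ and $0<T\le1$ (the range $-\tfrac12<b'<b\le0$ following by duality). I would prove it by complex interpolation between the endpoint $b'=b$, a bound uniform in $T$, and the endpoint $b'=0$, which reads $\|\psi_T g\|_{L^2}\lesssim T^{b}\|g\|_{H^b}$; interpolating the target spaces $[L^2,H^b]_\theta=H^{\theta b}$ with $\theta b=b'$ produces exactly the factor $(T^b)^{1-\theta}=T^{b-b'}$. Each endpoint I would establish by a Littlewood--Paley splitting $g=g_{\mathrm{lo}}+g_{\mathrm{hi}}$ at the scale $|\tau|\sim T^{-1}$: on $g_{\mathrm{hi}}$ the gain is immediate from the weight, since $\langle\tau\rangle^{-b}\lesssim T^{b}$ there, while for $g_{\mathrm{lo}}$ one exploits the \emph{physical-space} concentration of $\psi_T$ on $|t|\lesssim T$. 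Concretely I would write $g_{\mathrm{lo}}(t)=g_{\mathrm{lo}}(0)+\big(g_{\mathrm{lo}}(t)-g_{\mathrm{lo}}(0)\big)$, control $|g_{\mathrm{lo}}(0)|$ and $\|g_{\mathrm{lo}}'\|_{L^\infty}$ by Cauchy--Schwarz against $\|g\|_{H^b}$ using $\int_{|\tau|\le T^{-1}}\langle\tau\rangle^{-2b}\,d\tau\sim T^{2b-1}$ (finite with this power precisely because $2b<1$), and close using the mass and support scale of $\psi_T$ together with $|t|\lesssim T$ on its support.

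The main obstacle is this low-frequency endpoint: the genuine smoothing factor $T^{b}$ is invisible to Fourier-side convolution estimates (a crude Peetre/Young bound is off by $T^{2b}$ and even points the wrong way), and must instead be extracted from the time-concentration of $\psi_T$ via the Taylor expansion above. This is exactly where the hypothesis $b<\tfrac12$ enters, through the integrability and sharp power of $\langle\tau\rangle^{-2b}$ up to the scale $T^{-1}$, and it also explains why the straddling configuration $b'<0<b$ is excluded from the statement, the argument being confined to $[0,\tfrac12)$ and its dual reflection.
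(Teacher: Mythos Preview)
The paper does not supply its own proof of this lemma; it simply records the statement and cites \cite{Tao} and \cite{GVT}. Your argument is correct and is essentially the standard one found in those references: the conjugation $v=e^{-itL}u$ reduces everything to scalar multiplier estimates in $H^b(\R_t)$, part (i) is then Peetre plus Young, and part (ii) is handled by a frequency splitting at scale $T^{-1}$ together with the time-concentration of $\psi_T$, with the negative range obtained by duality.

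One small point worth making explicit: the endpoint $b'=b$ in your interpolation scheme---the uniform-in-$T$ bound $\|\psi_T g\|_{H^b}\lesssim\|g\|_{H^b}$---does not follow from part (i), whose constant depends on the multiplier and deteriorates like $T^{-b}$ for $\psi_T$. Your splitting does yield it, but the bookkeeping differs slightly from the $b'=0$ case you spell out: for $g_{\mathrm{lo}}$ one combines $\|g_{\mathrm{lo}}\|_{L^\infty}\lesssim T^{b-1/2}\|g\|_{H^b}$ with the support of $\psi_T$ to get $\|\psi_T g_{\mathrm{lo}}\|_{L^2}\lesssim T^{b}\|g\|_{H^b}$, and then uses that $\widehat{\psi_T g_{\mathrm{lo}}}$ is essentially localized to $|\tau|\lesssim T^{-1}$ to pass back to $H^b$ at the cost $T^{-b}$; for $g_{\mathrm{hi}}$ a Young's inequality argument splitting on $|\tau-\sigma|\lessgtr|\sigma|$ gives uniformity directly. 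With that clarification your proof is complete. In fact the same ingredients assemble into a direct proof of (ii) without the interpolation step, which is closer to how the cited references proceed.
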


%%%%%%%%%%%%%%%%%%%%%%%%%%%%%%%%%%%%%%%%%%%%%%%%%%%%%%%%%%%%%%%%%%%%%%%%%%%%%%%%%%%%
\subsection{Riemman-Liouville fractional integral}
The tempered distribution $\frac{t_+^{\alpha-1}}{\Gamma(\alpha)}$ is defined as a locally integrable function for Re $\alpha>0$ by
$
\left \langle \frac{t_+^{\alpha-1}}{\Gamma(\alpha)},\ f \right \rangle=\frac{1}{\Gamma(\alpha)}\int_0^{+\infty} t^{\alpha-1}f(t)dt.
$
For Re $\alpha>0$, we have that
\begin{equation}\label{derivada}
\frac{t_+^{\alpha-1}}{\Gamma(\alpha)}=\partial_t^k\left( \frac{t_+^{\alpha+k-1}}{\Gamma(\alpha+k)}\right),\ \text{for all}\ k\in\mathbb{N}.
\end{equation}
 This expression can be used to extend the definition, in the sense of distributions) of $\frac{t_+^{\alpha-1}}{\Gamma(\alpha)}$ to all $\alpha \in \mathbb{C}$. A change of contour  calculation shows that
$
\left(\frac{t_+^{\alpha-1}}{\Gamma(\alpha)}\right)^{\widehat{}}(\tau)=e^{-\frac{1}{2}\pi\alpha}(\tau-i0)^{-\alpha},$
where $(\tau-i0)^{-\alpha}$ is the distributional limit. 

If $f\in C_0^{\infty}(\mathbb{R}^+)$, we define
$
\mathcal{I}_{\alpha}f=\frac{t_+^{\alpha-1}}{\Gamma(\alpha)}*f.
$
It follows that,
  for Re $\alpha>0$, $$\mathcal{I}_{\alpha}f(t)=\frac{1}{\Gamma(\alpha)}\int_0^t(t-s)^{\alpha-1}f(s)ds,\;\;\;\;t>0.$$
Now, for Re $\alpha\leq0$, there exist $\tau\in\N\cup\{0\}$ such that $-\tau\leq Re\alpha<-\tau+1$, then from \eqref{derivada} we can define
\begin{equation*}
\mathcal{I}_{\alpha}f(t)=\frac{1}{\Gamma(\alpha+\tau+1)}\int_0^t(t-s)^{\alpha+\tau}f^{(\tau+1)}(s)ds, \;\;\;\;t>0
\end{equation*}
where $f^{(\tau+1)}$ denotes the $(\tau+1)$-derivative of $f$.

We can extend the definition of $\mathcal{I}_{\alpha}f$ for $t\leq0$, as follow (without of lost generality we use the same notation)
\begin{equation}
\mathcal{I}_{\alpha}f(t)=\begin{cases}
\frac{1}{\Gamma(\alpha)}\int_0^t(t-s)^{\alpha-1}f(s)ds,&t>0
\\[5pt]
0,& t\leq 0.
\end{cases}
\end{equation}
Moreover,
$\mathcal{I}_0f=f,\ \mathcal{I}_1f(t)=\int_0^tf(s)ds,$ and $\mathcal{I}_{-1}f=f'$. Also $\mathcal{I}_{\alpha}\mathcal{I}_{\beta}=\mathcal{I}_{\alpha+\beta}.$
The following lemma, whose proofs can be found in \cite{Holmerkdv}, state some useful properties of the Riemman-Liouville fractional integral operator.
\begin{lemma}\label{estiR-L}
	If $f\in C_0^{\infty}(\mathbb{R}^+)$, then 
	\begin{itemize}
	\item[(i)] For all $\alpha \in \mathbb{C}$, we have $\mathcal{I}_{\alpha}f\in C_0^{\infty}(\mathbb{R}^+)$.
	
	\medskip
	\item[(ii)] For all $0\leq \alpha <\infty$ and $s\in\R$, then 
	$$
	\|\mathcal{I}_{-\alpha}h\|_{H_0^s(\mathbb{R}^+)}\leq c \|h\|_{H_0^{s+\alpha}(\mathbb{R}^+)}.
	$$

	\item[(iii)] For all $0\leq \alpha <\infty$, $s\in \mathbb{R}$ and $\mu\in C_0^{\infty}(\mathbb{R})$, then
$$
\|\mu\mathcal{I}_{\alpha}h\|_{H_0^s(\mathbb{R}^+)}\leq c \|h\|_{H_0^{s-\alpha}(\mathbb{R}^+)},
$$
 where $c=c(\mu)$.
	\end{itemize}
\end{lemma}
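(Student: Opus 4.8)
The plan is to handle all three items through the convolution / Fourier--multiplier description of $\mathcal{I}_\alpha$, working first on the dense class $C_{0,c}^\infty(\R^+)$ and extending by continuity. The basic tool is $\mathcal{I}_\beta f=\frac{t_+^{\beta-1}}{\Gamma(\beta)}*f$, which on the Fourier side reads $\widehat{\mathcal{I}_\beta f}(\tau)=c_\beta\,(\tau-i0)^{-\beta}\hat f(\tau)$ with $|c_\beta|=1$ and $|(\tau-i0)^{-\beta}|=|\tau|^{-\beta}$, valid for every $\beta\in\C$ once the distribution $\frac{t_+^{\beta-1}}{\Gamma(\beta)}$ is extended through \eqref{derivada}. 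Two structural facts will be used repeatedly: the kernel $\frac{t_+^{\beta-1}}{\Gamma(\beta)}$ is supported in $[0,\infty)$, so its convolution with a function supported in $[0,\infty)$ is again supported in $[0,\infty)$; and for $h$ supported in $[0,\infty)$ the norm $\|h\|_{H_0^s(\R^+)}$ agrees up to constants with the full-line norm $\|h\|_{H^s(\R)}$, which lets me pass freely to the Fourier transform on $\R$.

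For (i), I would write $\mathcal{I}_\alpha f=\partial_t^{k}\mathcal{I}_{\alpha+k}f$ with $k\in\N$ chosen so large that $\mathrm{Re}(\alpha+k)>0$, using \eqref{derivada}. Then $\mathcal{I}_{\alpha+k}f=\frac{t_+^{\alpha+k-1}}{\Gamma(\alpha+k)}*f$ is the convolution of a locally integrable function with $f\in C_{0,c}^\infty(\R^+)$, hence $C^\infty$ (all derivatives may be moved onto $f$) and supported in $[0,\infty)$ by the support remark; applying $\partial_t^k$ preserves both properties, so $\mathcal{I}_\alpha f\in C_0^\infty(\R^+)$. For (ii), the same support remark shows $\mathcal{I}_{-\alpha}h$ is supported in $[0,\infty)$, so it suffices to bound its $H^s(\R)$ norm; by the multiplier identity with $\beta=-\alpha$ and the elementary inequality $|\tau|\le\langle\tau\rangle$,
\begin{equation*}
\|\mathcal{I}_{-\alpha}h\|_{H^s(\R)}^2=\int_{\R}\langle\tau\rangle^{2s}|\tau|^{2\alpha}|\hat h(\tau)|^2\,d\tau\le\int_{\R}\langle\tau\rangle^{2(s+\alpha)}|\hat h(\tau)|^2\,d\tau=\|h\|_{H^{s+\alpha}(\R)}^2,
\end{equation*}
which is precisely (ii).

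Item (iii) is the delicate one, and I expect it to be the main obstacle, because the multiplier $|\tau|^{-\alpha}$ is singular at $\tau=0$: the naive bound $\langle\tau\rangle^{s}|\tau|^{-\alpha}\lesssim\langle\tau\rangle^{s-\alpha}$ fails on low frequencies, and indeed $\mathcal{I}_\alpha h$ typically grows like $t^{\alpha-1}$ as $t\to+\infty$ and need not lie in $L^2(\R)$, which is exactly why the cutoff $\mu$ appears. First I would use locality: for $t\in\mathrm{supp}\,\mu$ the value $\mathcal{I}_\alpha h(t)$ depends only on $h$ over a bounded interval, so $\mu\mathcal{I}_\alpha h=\mu\mathcal{I}_\alpha(\rho h)$ for a cutoff $\rho\in C_{0,c}^\infty$ equal to $1$ there, and since multiplication by $\rho$ is bounded on $H^{s-\alpha}$ we may assume $h$ compactly supported. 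Next split $h=h_{lo}+h_{hi}$ with a frequency cutoff supported in $\{|\tau|\le1\}$. On $h_{hi}$ one has $|\tau|^{-\alpha}\sim\langle\tau\rangle^{-\alpha}$, giving $\|\mathcal{I}_\alpha h_{hi}\|_{H^s(\R)}\lesssim\|h_{hi}\|_{H^{s-\alpha}(\R)}$, after which multiplication by $\mu$ is bounded on $H^s(\R)$. On $h_{lo}$, which is band-limited so that $\|h_{lo}\|_{H^\sigma}\sim\|h_{lo}\|_{L^2}\lesssim\|h\|_{H^{s-\alpha}}$ for every $\sigma$, the gain comes not from the multiplier but from $\mu$: the product $\mu\mathcal{I}_\alpha h_{lo}$ is compactly supported, so $\|\mu\mathcal{I}_\alpha h_{lo}\|_{H^s}\lesssim_{N}\|\mu\mathcal{I}_\alpha h_{lo}\|_{C^N}$ for $N>s$, and the $C^N$ norm on $\mathrm{supp}\,\mu$ is controlled by $\|h_{lo}\|_{L^2}$ once $\mathcal{I}_\alpha h_{lo}=\partial_t^m\mathcal{I}_{\alpha+m}h_{lo}$ is represented with a smooth kernel. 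The crux is precisely this interaction between the zero-frequency blow-up of $(\tau-i0)^{-\alpha}$ and the cutoff $\mu$: the bound is genuinely not a pure multiplier estimate, and one must exploit that $\mu$ excises the non-$L^2$ polynomial tail. Summing the high- and low-frequency contributions then yields (iii) with $c=c(\mu)$.
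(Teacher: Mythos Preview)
The paper does not supply its own proof of this lemma; it simply cites Holmer's paper \cite{Holmerkdv}. So there is no argument in the paper to compare against, only your sketch versus the referenced source.

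Your treatments of (i) and (ii) are correct and essentially the standard ones: the support/smoothness claims in (i) follow from the convolution structure and the analytic continuation \eqref{derivada}, and the Fourier--multiplier computation for (ii) is exactly right.

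For (iii) you correctly isolate the real difficulty --- the singularity of $(\tau-i0)^{-\alpha}$ at $\tau=0$, which the cutoff $\mu$ must absorb --- and the high--frequency half of your split goes through cleanly. The low--frequency step, however, has a genuine gap. Your claim is that the $C^N$ norm of $\mathcal I_\alpha h_{lo}$ on $\mathrm{supp}\,\mu$ is controlled by $\|h_{lo}\|_{L^2}$, but this does not follow from the information available. Once you apply the Littlewood--Paley projection, $h_{lo}$ is no longer supported in $[0,\infty)$ (nor compactly supported), so the causal integral representation $\mathcal I_{\alpha+m}h_{lo}(t)=\frac{1}{\Gamma(\alpha+m)}\int_0^t(t-s)^{\alpha+m-1}h_{lo}(s)\,ds$ is not available; on the Fourier side you are effectively trying to bound $\int_{|\tau|\le 2}|\tau|^{-\alpha}|\widehat{h_{lo}}(\tau)|\,d\tau$ by $\|\widehat{h_{lo}}\|_{L^2}$, which fails as soon as $\alpha\ge\tfrac12$. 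The identity $\mathcal I_\alpha h_{lo}=\partial_t^m\mathcal I_{\alpha+m}h_{lo}$ you invoke only makes the multiplier singularity worse. If instead you push the convolution onto the original compactly supported $h$ (writing $\mathcal I_\alpha h_{lo}=K*h$ with $K=\frac{t_+^{\alpha-1}}{\Gamma(\alpha)}*\check\chi$ smooth), you obtain a bound by $\|h\|_{L^2}$, which is \emph{not} dominated by $\|h\|_{H^{s-\alpha}}$ when $s<\alpha$ --- precisely the regime of interest.

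The structural fact that makes (iii) true is the one your frequency projection destroys: the support of $h$ in $[0,\infty)$ forces $\widehat h$ to be the boundary value of a function holomorphic in the lower half--plane, which is compatible with the distribution $(\tau-i0)^{-\alpha}$ and tames the contribution near $\tau=0$. Holmer's argument exploits this directly (together with an interpolation in $\alpha$) rather than splitting $h$ in frequency; if you want to rescue your approach, you would need to keep the half--line support intact throughout the low--frequency analysis.
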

For more details on the distribution $\frac{t_+^{\alpha-1}}{\Gamma(\alpha)}$ see \cite{Fr}.
%%%%%%%%%%%%%%%%%%%%%%%%%%%%%%%%%%%%%%%%%%%%%%%%%%%%%%%%%%%%%%%%%%%%%%%%%%%%%%%%%%%%

\subsection{Elementary integral inequalities}  In this section, we describe some basic integral inequalities in order to prove the principal trilinear estimate (see Section \ref{trilinear}).

\begin{lemma}[Lemma 3.197 in \cite{Raf}]\label{Lemma-1}
	Let $a,\,b\in[0,\infty)$ and $s\geq0$. Then there exist positive constants $m_s$ and $M_s$ depending only on $s$ such that
	$$
	m_s(a^s+b^s)\leq(a+b)^s\leq M_s(a^s+b^s).
	$$
\end{lemma}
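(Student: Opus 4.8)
The plan is to prove the two inequalities separately by an elementary reduction, using the fact that both sides are positively homogeneous of degree $s$ in $(a,b)$ and that the statement is symmetric in $a$ and $b$. The bounds are trivial when $a=b=0$ (for $s>0$ both sides vanish, and for $s=0$ one reads $1\le 1\le 1$ with the usual convention $0^0=1$), so I may assume $a+b>0$, and by symmetry I may further assume $a\ge b\ge 0$.

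For the lower bound I would observe that $a\ge b\ge 0$ together with $s\ge 0$ gives $a^s\ge b^s$, hence $a^s+b^s\le 2a^s$. Since $t\mapsto t^s$ is nondecreasing on $[0,\infty)$ for $s\ge 0$, the inequality $a+b\ge a$ yields $(a+b)^s\ge a^s$. Combining these gives $(a+b)^s\ge a^s\ge \tfrac12(a^s+b^s)$, so the left-hand inequality holds with $m_s=\tfrac12$, valid for every $s\ge 0$. For the upper bound, the same normalization gives $a+b\le 2a$, and monotonicity of $t\mapsto t^s$ then yields $(a+b)^s\le (2a)^s=2^s a^s\le 2^s(a^s+b^s)$, so the right-hand inequality holds with $M_s=2^s$.

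Finally, I would note that these constants are far from optimal, and that the sharp version follows from the homogeneous reduction to the slice $a+b=1$: on it the quotient $(a^s+b^s)/(a+b)^s$ equals $h(t):=t^s+(1-t)^s$ for $t=a/(a+b)\in[0,1]$, a continuous function on the compact interval $[0,1]$, so it attains a strictly positive minimum and maximum. A short convexity/concavity analysis (distinguishing $0<s\le 1$, where $h$ is concave with interior maximum, from $s\ge 1$, where $h$ is convex with interior minimum) identifies the extrema as $1$ and $2^{1-s}$, giving the sharp constants $m_s=\min\{1,2^{s-1}\}$ and $M_s=\max\{1,2^{s-1}\}$.

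I do not expect any genuine obstacle here: the argument is entirely elementary, and the only points requiring a word of care are the degenerate case $a=b=0$ and the boundary case $s=0$, both of which are immediate. The non-optimal constants $m_s=\tfrac12$ and $M_s=2^s$ suffice for every subsequent application of this lemma in the trilinear estimate of Section~\ref{trilinear}, so the sharp refinement is included only for completeness.
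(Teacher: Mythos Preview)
Your argument is correct. The paper does not actually prove this lemma; it merely quotes it from \cite{Raf}, so there is no ``paper's proof'' to compare against. Your elementary homogeneity-and-monotonicity reduction is exactly the standard route, and the non-sharp constants $m_s=\tfrac12$, $M_s=2^s$ are more than sufficient for every use of the lemma in the paper. Your sharp-constant analysis is also correct and, in particular, recovers the observation the paper makes immediately after the statement (that one may take $M_s\le 1$ when $0\le s\le 1$), since your computation gives $M_s=\max\{1,2^{s-1}\}=1$ in that range.
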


In a particular case $0\leq s\leq1$ we have that $M_s\leq 1$, which is important to prove the next lemma.
%%%%%%%
\begin{lemma}\label{Lema0}
	Let $m,\,n\in\R$ such that $m\leq n$ and $0<c<1$, then
	$$
	\int_m^n \dfrac{1}{\langle x\rangle^c}\,dx\lesssim \langle n-m\rangle^{1-c}
	$$
\end{lemma}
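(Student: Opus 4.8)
The plan is to prove the bound
$$
\int_m^n \frac{1}{\langle x\rangle^c}\,dx \lesssim \langle n-m\rangle^{1-c}
$$
for $m\le n$ and $0<c<1$ by splitting the analysis according to the sign of the endpoints, since the integrand $\langle x\rangle^{-c}=(1+x^2)^{-c/2}$ is even in $x$ and decays away from the origin. I would first reduce to the case where the interval does not straddle zero. Indeed, if $m<0<n$ then I would write $\int_m^n = \int_m^0 + \int_0^n$, and using evenness of $\langle x\rangle^{-c}$, $\int_m^0 \langle x\rangle^{-c}\,dx = \int_0^{-m}\langle x\rangle^{-c}\,dx$, so the problem reduces to bounding integrals over intervals of the form $[0,L]$ with $L=-m$ and $L=n$ respectively, while noting that both $-m\le n-m$ and $n\le n-m$ in this regime. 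The remaining case where $m,n$ have the same sign again reduces by evenness to an interval $[A,B]\subset[0,\infty)$ with $B-A=n-m$, so in all cases it suffices to handle intervals inside $[0,\infty)$.

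The core estimate is therefore the following: for $0\le A\le B$ with $0<c<1$,
$$
\int_A^B \frac{1}{\langle x\rangle^c}\,dx \lesssim \langle B-A\rangle^{1-c}.
$$
On $[0,\infty)$ one has the elementary comparison $\langle x\rangle = (1+x^2)^{1/2}\sim 1+x$, so up to constants it is enough to estimate $\int_A^B (1+x)^{-c}\,dx$. This is a direct computation: the antiderivative is $\frac{(1+x)^{1-c}}{1-c}$, giving
$$
\int_A^B (1+x)^{-c}\,dx = \frac{(1+B)^{1-c}-(1+A)^{1-c}}{1-c}.
$$
Since $0<1-c<1$, the function $t\mapsto t^{1-c}$ is concave and subadditive on $[0,\infty)$, whence
$$
(1+B)^{1-c}-(1+A)^{1-c}\le \bigl((1+B)-(1+A)\bigr)^{1-c}=(B-A)^{1-c}\le \langle B-A\rangle^{1-c},
$$
which is precisely the desired bound with an implicit constant depending only on $c$.

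The only genuinely delicate point is the subadditivity inequality $(y)^{1-c}-(x)^{1-c}\le (y-x)^{1-c}$ for $0\le x\le y$, and this is exactly where Lemma \ref{Lemma-1} enters: taking $s=1-c\in(0,1)$, the stated conclusion $M_s\le 1$ gives $(a+b)^{1-c}\le a^{1-c}+b^{1-c}$, i.e. $t\mapsto t^{1-c}$ is subadditive, so with $a=x$ and $b=y-x$ one obtains $y^{1-c}\le x^{1-c}+(y-x)^{1-c}$, which rearranges to the claim. I expect the subadditivity step to be the main obstacle only in the sense that it must be justified rather than asserted, and the preceding lemma is designed precisely to supply it; the rest is the reduction by parity together with the explicit antiderivative, both of which are routine.
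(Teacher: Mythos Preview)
Your proof is correct and follows essentially the same approach as the paper: reduce by evenness of $\langle x\rangle^{-c}$ to intervals in $[0,\infty)$, replace $\langle x\rangle$ by $1+x$ up to constants, compute the explicit antiderivative, and conclude via the subadditivity of $t\mapsto t^{1-c}$ supplied by Lemma~\ref{Lemma-1} with $M_{1-c}\le 1$. The only organizational difference is that the paper treats the three sign cases in sequence and, in the straddling case, combines the two pieces using Lemma~\ref{Lemma-1} directly as $\langle -m\rangle^{1-c}+\langle n\rangle^{1-c}\lesssim \langle n-m\rangle^{1-c}$, whereas you observe $-m\le n-m$ and $n\le n-m$ and invoke monotonicity; both are equivalent.
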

\begin{proof}
	Suppose that $0\leq m$, then by Lemma \ref{Lemma-1}
	$$
	\int_m^n \dfrac{1}{\langle x\rangle^c}\,dx=\dfrac{1}{1-c}\left[(n+1)^{1-c}-(m+1)^{1-c}\right]\lesssim \langle n-m\rangle^{1-c}.
	$$

	On the other hand, for $m=0$ we have
	$
	\int_0^n \dfrac{1}{\langle x\rangle^c}\,dx\lesssim \langle n\rangle^{1-c}.
	$
	
	The case $n\leq 0$ is analogous to the first case. 
	
	Finally, in the general case $m\leq0\leq n$, we get
	$$
	\begin{aligned}
	\int_m^n \dfrac{1}{\langle x\rangle^c}\,dx=\int_m^0 \dfrac{1}{\langle x\rangle^c}\,dx+\int_0^n \dfrac{1}{\langle x\rangle^c}\,dx
	&=\int_0^{-m} \dfrac{1}{\langle x\rangle^c}\,dx+\int_0^n \dfrac{1}{\langle x\rangle^c}\,dx
	\\[8pt]
	&\lesssim \langle -m\rangle^{1-c}+\langle n\rangle^{1-c}\lesssim\langle n-m\rangle^{1-c},
	\end{aligned}
	$$
	where in the last inequality we use  Lemma \ref{Lemma-1}, thus we conclude the  proof of lemma.
\end{proof}
%%%%%%%%%%%%%%%%%%%%%%%%%%%%%%%%%%%%%%%%%%%%%%%%%%%%%%%%%%%%%%%%
\begin{lemma}[Lemma 4.2 in \cite{GVT}]\label{Lemma1}
	Let  $\beta\geq\gamma\geq0$, such that $\beta+\gamma>1$, then
	\begin{equation*}
	\int\frac{1}{\langle x-a_1\rangle^{\beta}\langle x-a_2\rangle^{\gamma}}dx\lesssim
	\langle a_1-a_2\rangle^{-\gamma}\Phi_{\beta}(a_1-a_2),
	\end{equation*}
	where
	\begin{equation*}
	\Phi_\beta(a)\backsim
	\begin{cases}
	1&\beta>1
	\\
	\log(1+\langle a\rangle) & \beta=1
	\\
	\langle a\rangle^{1-\beta} &\beta<1
	\end{cases}
	\end{equation*}
\end{lemma}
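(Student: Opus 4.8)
The plan is to prove the estimate by decomposing the real line into regions adapted to the two centers $a_1$ and $a_2$, thereby reducing it to the one--variable integrals already controlled by Lemma \ref{Lema0}. Since the left-hand side depends only on $a:=a_1-a_2$ by translation invariance, I would first normalize $a_2=0$, $a_1=a$ and write
$$
\mathcal J:=\int\frac{dx}{\langle x-a\rangle^{\beta}\langle x\rangle^{\gamma}}.
$$
When $\langle a\rangle\sim 1$ the two weights are comparable, $\langle x-a\rangle\sim\langle x\rangle$, so $\mathcal J\lesssim\int\langle x\rangle^{-(\beta+\gamma)}dx\lesssim 1$ because $\beta+\gamma>1$, which already matches $\langle a\rangle^{-\gamma}\Phi_\beta(a)\sim 1$. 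Hence I may assume $|a|\gg 1$.

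For $|a|\gg 1$ I would split $\mathbb R=A\cup B\cup C$, where $A=\{|x-a|\le|a|/2\}$ sits near the $\beta$-center, $B=\{|x|\le|a|/2\}$ sits near the $\gamma$-center, and $C$ is the complement (the far region). On $A$ one has $|x|\ge|a|/2$, hence $\langle x\rangle\gtrsim\langle a\rangle$, and, by Lemma \ref{Lema0} in the range $\beta<1$ together with a direct computation in the cases $\beta=1$ and $\beta>1$,
$$
\int_A\frac{dx}{\langle x-a\rangle^{\beta}\langle x\rangle^{\gamma}}\lesssim\langle a\rangle^{-\gamma}\int_{|y|\le|a|/2}\frac{dy}{\langle y\rangle^{\beta}}\lesssim\langle a\rangle^{-\gamma}\Phi_\beta(a),
$$
the last step being exactly the trichotomy $\beta>1,\ \beta=1,\ \beta<1$ that defines $\Phi_\beta$. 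On $B$ one has $\langle x-a\rangle\gtrsim\langle a\rangle$, which gives a bound $\langle a\rangle^{-\beta}\int_{|y|\le|a|/2}\langle y\rangle^{-\gamma}dy$, while on $C$ the two arguments are comparable, $\langle x-a\rangle\sim\langle x\rangle$ with $|x|\gtrsim|a|$, so the tail estimate for an exponent $\beta+\gamma>1$ yields $\int_C\lesssim\langle a\rangle^{1-\beta-\gamma}$.

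The main work, and the only genuine obstacle, is verifying that the $B$ and $C$ contributions are also dominated by $\langle a\rangle^{-\gamma}\Phi_\beta(a)$. For $C$ this reduces to the inequality $\langle a\rangle^{1-\beta}\lesssim\Phi_\beta(a)$, which holds with equality when $\beta<1$ and trivially when $\beta\ge1$. For $B$ one again runs through the three sub-cases of $\int_{|y|\le|a|/2}\langle y\rangle^{-\gamma}dy$ and compares term by term; this is precisely where the hypotheses $\gamma\le\beta$ and $\beta+\gamma>1$ enter, for instance to absorb a factor $\langle a\rangle^{-\beta}\log\langle a\rangle$ into $\langle a\rangle^{-\gamma}\Phi_\beta(a)$ when $\gamma=1\le\beta$, or to guarantee $-\beta\le-\gamma$ when $\gamma>1$ forces $\beta\ge\gamma>1$. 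Collecting the three regions then gives the claimed bound; no individual step is difficult beyond this careful bookkeeping across the $\Phi_\beta$ trichotomy.
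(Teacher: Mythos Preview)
Your argument is correct and is essentially the standard proof of this estimate: normalize by translation, dispose of the case $\langle a\rangle\sim 1$ trivially, and for $|a|\gg 1$ split into the two near regions and the far region, using $\beta\ge\gamma$ and $\beta+\gamma>1$ only in the bookkeeping for regions $B$ and $C$. All the case checks you sketch go through exactly as you indicate.

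There is nothing to compare with in the paper itself: the authors do not prove this lemma but simply quote it as Lemma~4.2 of \cite{GVT}. Your decomposition is in fact the same one used in the original reference, so your proof is aligned with the cited source even though the present paper omits the argument.
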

%%%%%%%%%%%%%%%%%%%%%%%%%%%%%%%%%%%%%%%%%%%%%
%\begin{lemma}[Lemma 2.1 in \cite{bop}]
%	Let  $\beta\geq\gamma\geq0$, such that $\beta+\gamma>1$, then
%	\begin{equation*}
%	\int\frac{1}{\langle x-a\rangle^{\beta}\langle x+a\rangle^{\gamma}}dx\lesssim
%	\langle a\rangle^{-\sigma},
%	\end{equation*}
%	where $\sigma=\gamma-[1-\beta]_+$
%	\begin{equation*}
%	[\lambda]_+=
%	\begin{cases}
%	\lambda & \lambda>0
%	\\
%	\epsilon & \lambda=0
%	\\
%	0 & \lambda<1
%	\end{cases}
%	\end{equation*}
%	where $\epsilon>0$ is small.
%\end{lemma}
%%%%%%%%%%%%%%%%%%%%%%%%%%%%%%%%%%%%%%%%%%%%%%%%%%%%%%%%%%%%%%%
\begin{lemma}[Lemma 6.3 in \cite{tzirakis}]\label{Lemma2}
	For fixed $\rho\in(\dfrac{1}{2},1)$, we then
	\begin{equation*}
	\int\frac{1}{\langle x\rangle^{\rho}|x-a|^{\frac{1}{2}}}dx\lesssim\frac{1}{\langle a\rangle^{\rho-\frac{1}{2}}}.
	\end{equation*}
\end{lemma}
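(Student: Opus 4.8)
The plan is to separate the genuine singularity of $|x-a|^{-1/2}$ at $x=a$ from the decaying tail, handling the tail with the convolution bound of Lemma \ref{Lemma1}. Concretely I would split
\[
\int_{\R}\frac{dx}{\langle x\rangle^{\rho}|x-a|^{1/2}}
=\int_{|x-a|\le 1}\frac{dx}{\langle x\rangle^{\rho}|x-a|^{1/2}}
+\int_{|x-a|>1}\frac{dx}{\langle x\rangle^{\rho}|x-a|^{1/2}}
=:I_{\mathrm{loc}}+I_{\mathrm{tail}},
\]
and bound each piece by $\langle a\rangle^{1/2-\rho}=\langle a\rangle^{-(\rho-1/2)}$, which is the desired right-hand side.

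For $I_{\mathrm{loc}}$ the point is that on the interval $\{|x-a|\le 1\}$ one has $\langle x\rangle\sim\langle a\rangle$, since $\bigl||x|-|a|\bigr|\le 1$ forces this comparison uniformly in $a$. Hence $\langle x\rangle^{-\rho}\lesssim\langle a\rangle^{-\rho}$ may be pulled out of the integral, and the remaining factor is an integrable singularity over a bounded interval, $\int_{|x-a|\le1}|x-a|^{-1/2}\,dx=4$. This gives $I_{\mathrm{loc}}\lesssim\langle a\rangle^{-\rho}\le\langle a\rangle^{1/2-\rho}$, the last step using $\langle a\rangle\ge1$.

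For $I_{\mathrm{tail}}$, on $\{|x-a|>1\}$ the singular factor is comparable to a Japanese bracket, $|x-a|^{-1/2}\sim\langle x-a\rangle^{-1/2}$, so that $I_{\mathrm{tail}}\lesssim\int_{\R}\langle x\rangle^{-\rho}\langle x-a\rangle^{-1/2}\,dx$. I would then invoke Lemma \ref{Lemma1} with $\beta=\rho$, $\gamma=\tfrac12$, $a_1=0$ and $a_2=a$; its hypotheses hold exactly because $\rho\ge\tfrac12$ gives $\beta\ge\gamma$ and $\rho>\tfrac12$ gives $\beta+\gamma>1$. This yields $I_{\mathrm{tail}}\lesssim\langle a\rangle^{-1/2}\Phi_{\rho}(a)$, and since $\rho<1$ we land in the branch $\Phi_{\rho}(a)\sim\langle a\rangle^{1-\rho}$, whence $I_{\mathrm{tail}}\lesssim\langle a\rangle^{-1/2+1-\rho}=\langle a\rangle^{1/2-\rho}$. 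Adding the two contributions proves the claim.

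The computation is essentially routine, so there is no serious obstacle; the one thing to be careful about is that Lemma \ref{Lemma1} is stated for two Japanese brackets and cannot be applied directly to $|x-a|^{-1/2}$, whose singularity at $x=a$ must first be excised — this is precisely the role of the cut at $|x-a|=1$. The hypothesis $\rho\in(\tfrac12,1)$ is used twice and sharply: $\rho>\tfrac12$ makes the tail integrable (equivalently $\beta+\gamma>1$ in Lemma \ref{Lemma1}), while $\rho<1$ selects the $\langle a\rangle^{1-\rho}$ branch of $\Phi_\rho$ that produces the exact exponent $\tfrac12-\rho$; at the endpoint $\rho=1$ a logarithmic factor would instead appear.
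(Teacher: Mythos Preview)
Your argument is correct. The paper does not supply its own proof of this lemma; it is quoted as Lemma~6.3 of \cite{tzirakis} and used as a black box. Your splitting at $|x-a|=1$, with the local piece handled by $\langle x\rangle\sim\langle a\rangle$ and the tail by Lemma~\ref{Lemma1} with $(\beta,\gamma)=(\rho,\tfrac12)$, is a standard and complete proof.
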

%%%%%%%%%%%%%%%%%%%%%%%%%%%%%%%%%%%%%%%%%%%%%%%%%%%%%%%%%%%%%%%%%%%%%%%%%%%%%%%%%%%%%%
\begin{lemma}[Lemma 2 in \cite{Demirbas}]\label{Lemma3}
	Fix $\alpha\in(1,2)$. For $n,j,k\in \R$, we have
	$$
	\big||k+n|^\alpha-|k+m+n|^\alpha+|k+m|^\alpha-|k|^\alpha\big|\gtrsim \dfrac{|m||n|}{(|m|+|n|+|k|)^{2-\alpha}}.
	$$
	where the implicit constant depends of $\alpha$.
\end{lemma}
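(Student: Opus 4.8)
The plan is to read the left-hand side as an iterated finite difference of the function $g(x)=|x|^\alpha$ and to convert it into a double integral of $g''$ whose sign is constant. (I read the three free variables as $m,n,k\in\R$; the symbol $j$ in the statement appears to be a misprint for $m$.) Concretely, set $g(x)=|x|^\alpha$ and group the four terms as
$$
|k+n|^\alpha-|k+m+n|^\alpha+|k+m|^\alpha-|k|^\alpha
=\bigl[g(k+n)-g(k)\bigr]-\bigl[g(k+m+n)-g(k+m)\bigr].
$$
Since $\alpha\in(1,2)$, the derivative $g'(x)=\alpha|x|^{\alpha-1}\sgn(x)$ is continuous, and its weak derivative $g''(x)=\alpha(\alpha-1)|x|^{\alpha-2}$ lies in $L^1_{\loc}(\R)$ because $\alpha-2\in(-1,0)$. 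Applying the fundamental theorem of calculus once in the inner variable and once in the outer variable gives the representation
$$
|k+n|^\alpha-|k+m+n|^\alpha+|k+m|^\alpha-|k|^\alpha
=-\alpha(\alpha-1)\int_0^n\!\!\int_0^m |k+s+t|^{\alpha-2}\,ds\,dt .
$$

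Next I would exploit that the integrand is nonnegative. This forces the whole expression to have a definite sign, so its modulus equals $\alpha(\alpha-1)$ times the integral of $|k+s+t|^{\alpha-2}$ over the genuine rectangle with side lengths $|m|$ and $|n|$; here is exactly where the signs of $m$ and $n$ must be tracked, since reversing an orientation flips the iterated integral but not its absolute value. On that rectangle every point obeys $|k+s+t|\le |k|+|m|+|n|$, and because $\alpha-2<0$ the map $r\mapsto r^{\alpha-2}$ is decreasing, so
$$
|k+s+t|^{\alpha-2}\ge\bigl(|k|+|m|+|n|\bigr)^{\alpha-2}
$$
throughout. Integrating this pointwise lower bound over a rectangle of area $|m||n|$ yields
$$
\bigl||k+n|^\alpha-|k+m+n|^\alpha+|k+m|^\alpha-|k|^\alpha\bigr|
\ge \alpha(\alpha-1)\,\frac{|m||n|}{(|m|+|n|+|k|)^{2-\alpha}},
$$
which is the claimed inequality, with an explicit implicit constant $\alpha(\alpha-1)$.

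The only delicate point is the double use of the fundamental theorem of calculus, since $g''$ blows up at the origin. This is legitimate precisely because $g'$ is continuous and $g''=\alpha(\alpha-1)|\cdot|^{\alpha-2}\in L^1_{\loc}$, so that $g'$ is absolutely continuous and the double integral converges absolutely. If one prefers to bypass any distributional reasoning, the same identity can be proved first for the smooth regularizations $g_\varepsilon(x)=(x^2+\varepsilon^2)^{\alpha/2}$, whose derivatives are classical, and then recovered by letting $\varepsilon\to0$ via dominated convergence on the rectangle (the dominating function being the integrable $|k+s+t|^{\alpha-2}$). Everything else is an elementary monotonicity estimate, so I expect this regularity bookkeeping at the singularity of $g''$ to be the main, and essentially only, obstacle.
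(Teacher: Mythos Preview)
Your argument is correct: the identity
\[
|k+n|^\alpha-|k+m+n|^\alpha+|k+m|^\alpha-|k|^\alpha
=-\alpha(\alpha-1)\int_0^n\!\!\int_0^m |k+s+t|^{\alpha-2}\,ds\,dt
\]
is valid because $g'(x)=\alpha|x|^{\alpha-1}\sgn x$ is absolutely continuous with $g''\in L^1_{\loc}$, the integrand is nonnegative, and the pointwise bound $|k+s+t|\le |k|+|m|+|n|$ on the rectangle together with the monotonicity of $r\mapsto r^{\alpha-2}$ gives exactly the stated lower bound with constant $\alpha(\alpha-1)$. The paper does not give its own proof but cites \cite{Demirbas}; the argument there is the same second-difference/convexity idea (two applications of the mean value theorem for $|x|^{\alpha}$), so your approach coincides with the original, written perhaps a bit more cleanly as a double integral.
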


Finally, the following Lemma can be found in the proof of Lemma 6.2 in \cite{Cavalcante}.
\begin{lemma}\label{Lemadesigual}
Let $b<\frac{1}{2}$, $-1<\lambda<\frac{1}{2}$ and $-1<\sigma+\lambda<0$.
	$$
	\int_{\R}\frac{|\eta|^{\lambda}(1+|\eta|)^{\sigma}}{(1+|\tau-\eta|)^{2-2b}} \ d\eta\leq c (1+|\tau|)^{\sigma+\lambda}.
	$$
\end{lemma}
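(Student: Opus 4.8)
The plan is to isolate the local singularity of the weight $|\eta|^\lambda$ at the origin and reduce the remaining piece to the convolution estimate of Lemma \ref{Lemma1}. Throughout I write $\beta:=2-2b$ and $\gamma:=-(\sigma+\lambda)$; the hypothesis $b<\frac12$ gives $\beta>1$, while $-1<\sigma+\lambda<0$ makes $\gamma\in(0,1)$. Since $\langle\tau\rangle\sim 1+|\tau|$, it suffices to prove the bound $\langle\tau\rangle^{\sigma+\lambda}=\langle\tau\rangle^{-\gamma}$. I would split the integral as $\int_{\R}=\int_{|\eta|\le 1}+\int_{|\eta|>1}$ and treat the two regions differently, because near $\eta=0$ the factor $|\eta|^\lambda$ is not comparable to any translate of $\langle\cdot\rangle$.

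For the inner piece $|\eta|\le 1$ the factor $(1+|\eta|)^\sigma$ is comparable to $1$, and a short estimate shows $1+|\tau-\eta|\gtrsim\langle\tau\rangle$ uniformly for $|\eta|\le 1$ (splitting the cases $|\tau|\le 2$ and $|\tau|>2$). Hence this contribution is controlled by $\langle\tau\rangle^{-\beta}\int_{|\eta|\le1}|\eta|^{\lambda}\,d\eta$, and the remaining one-dimensional integral is finite precisely because $\lambda>-1$. Since $b<\frac12$ forces $-\beta=2b-2<-1<\sigma+\lambda$, one gets $\langle\tau\rangle^{-\beta}\le\langle\tau\rangle^{\sigma+\lambda}$, which is the desired bound for this region.

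For the outer piece $|\eta|>1$ one has $|\eta|\sim 1+|\eta|\sim\langle\eta\rangle$, so the weight collapses: $|\eta|^{\lambda}(1+|\eta|)^{\sigma}\lesssim\langle\eta\rangle^{\lambda+\sigma}=\langle\eta\rangle^{-\gamma}$. Replacing $1+|\tau-\eta|$ by $\langle\tau-\eta\rangle$ and extending the domain back to all of $\R$ (the integrand is nonnegative), this piece is dominated by $\int_{\R}\langle\eta\rangle^{-\gamma}\langle\eta-\tau\rangle^{-\beta}\,d\eta$. This is exactly the form handled by Lemma \ref{Lemma1} with $a_1=\tau$, $a_2=0$ and exponents $\beta\ge\gamma\ge0$, $\beta+\gamma>1$, all guaranteed by $\beta>1>\gamma>0$. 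That lemma gives $\lesssim\langle\tau\rangle^{-\gamma}\Phi_\beta(\tau)$, and since $\beta>1$ we have $\Phi_\beta\sim 1$, yielding $\langle\tau\rangle^{-\gamma}=\langle\tau\rangle^{\sigma+\lambda}$. Adding the two contributions finishes the proof.

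The hard part — and the reason one cannot directly invoke Lemma \ref{Lemma1} on all of $\R$ — is the origin singularity $|\eta|^\lambda$ when $\lambda<0$, since the weight fails to be comparable to a shifted $\langle\cdot\rangle^{-\gamma}$ there. Cutting out $\{|\eta|\le 1\}$ resolves this, and the only quantitative points to verify are that the origin contribution is integrable (this is where $\lambda>-1$ enters) and that the resulting power $\langle\tau\rangle^{-\beta}$ does not exceed the target $\langle\tau\rangle^{\sigma+\lambda}$ (this is where $\beta>1$ together with $\sigma+\lambda>-1$ is used). I note that the hypothesis $\lambda<\frac12$ appears not to be needed for this particular estimate and is presumably inherited from the context in which the lemma is later applied.
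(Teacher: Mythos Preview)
Your argument is correct. The paper itself does not give a proof of this lemma; it merely cites it as appearing inside the proof of Lemma~6.2 of \cite{Cavalcante}. Your write-up therefore supplies a self-contained proof using only tools already present in the paper (specifically Lemma~\ref{Lemma1}), which is a genuine improvement over the bare citation. The split at $|\eta|=1$, the treatment of the origin piece via $\lambda>-1$ and $\beta>1>-(\sigma+\lambda)$, and the reduction of the outer piece to the convolution estimate of Lemma~\ref{Lemma1} with $\beta>1>\gamma>0$ are all valid. Your closing observation that the hypothesis $\lambda<\tfrac12$ is not used in the estimate is also accurate; in the paper's application (the proof of Lemma~\ref{edbf}(c)) one has $\lambda=\frac{1-\alpha}{\alpha}\in(-\tfrac12,0)$, so the stated upper bound on $\lambda$ is simply inherited from that context rather than required here.
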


\section{Linear Version}\label{linearversion} 
We define the unitary group associated to the linear fractional Schr\"odinger equation as
$$
v(t):=e^{it(-\Delta)^{\alpha/2}}\phi=\dfrac{1}{2\pi}\int_\R e^{ix\xi}e^{it|\xi|^\alpha}\hat{\phi}(\xi)d\xi
$$
where $\alpha\in(0,2)$. Follow that $v$ is a solution of  
\begin{equation}\label{linear}
\begin{cases}
i\partial_tv(x,t)+(-\Delta)^{\alpha/2}v(x,t) =0,& (x,t)\in\mathbb{R}\times\mathbb{R},
\\[5pt]
v(x,0)=\phi(x),& x\in\mathbb{R}.
\end{cases}
\end{equation}
%which is true since $-i(-\Delta)^{\alpha/2}$ is a m-acretive operator in $L^2(\R)$ with domain $D((-\Delta)^{\alpha/2})=H^{\alpha/4}(\R)$.

%Moreover if $\phi \in L^2(\R)$ then $v\in C(\R,H^{\alpha/4}(\R))\cap C^1(\R,L^2(\R))$.
%\begin{remark}
%$$
%\begin{aligned}
%(-\Delta)^{\alpha/2}&:H^{\alpha/2}(\R)\longrightarrow H^{-\alpha/2}(\R)
%\\[5pt]
%(-\Delta)^{\alpha/2}&:H^{\alpha}(\R)\longrightarrow L^2(\R)
%\\[5pt]
%(-\Delta)^{\sigma}&:H^{\alpha}(\R)\longrightarrow H^{\alpha-2\sigma}(\R)
%\end{aligned}
%$$
%\end{remark}

\medskip
The following Lemma states some inequalities of $v$, which are important in the proof of the Theorem \ref{theorem1}.
\begin{lemma}\label{grupo}
Let $0\leq s<\infty$ and $\psi$ is a cutoff function as defined above. If $\phi\in H^s(\R)$, then
\begin{itemize}
\item[$(a)$] $($Space trace$)$ Let $\alpha\in(1,2)$
$$
\Vert e^{it(-\Delta)^{\alpha/2}}\phi(x)\Vert_{C(\R_t;H^s(\R_x))}\leq  \Vert \phi \Vert_{H^s(\R)},
$$
\item[$(b)$] $($Time space$)$ Let $\alpha \in (1,2)$
$$
\Vert \psi(t) e^{it(-\Delta)^{\alpha/2}}\phi(x)\Vert_{C(\R_x;H^{\frac{2s-1+\alpha}{2\alpha}}(\R_t))}\leq c \Vert \phi \Vert_{H^s(\R)},
$$
where $c$ is a function which depend of $\psi$.

\item[$(c)$] $($Bourgain space$)$ Let $b\in(0,1)$
$$
\Vert \psi(t) e^{it(-\Delta)^{\alpha/2}}\phi(x)\Vert_{X^{s,b}}\leq c \Vert \psi\Vert_{H^1(\R)}\Vert \phi \Vert_{H^s(\R)}.
$$
\end{itemize}
\end{lemma}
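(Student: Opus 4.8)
The plan is to reduce all three estimates to computations on the (spatial or space-time) Fourier side, where the group acts as the unimodular multiplier $e^{it|\xi|^\alpha}$, and to treat them in order of increasing difficulty: $(a)$, then $(c)$, then $(b)$. For $(a)$ I would simply note that the spatial Fourier transform of $e^{it(-\Delta)^{\alpha/2}}\phi$ is $e^{it|\xi|^\alpha}\hat\phi(\xi)$, so that $|e^{it|\xi|^\alpha}|=1$ and Plancherel give $\Vert e^{it(-\Delta)^{\alpha/2}}\phi\Vert_{H^s(\R_x)}=\Vert\phi\Vert_{H^s(\R)}$ for every fixed $t$; continuity of $t\mapsto e^{it(-\Delta)^{\alpha/2}}\phi$ with values in $H^s$ follows by dominated convergence, and taking the supremum over $t$ yields $(a)$ with constant $1$. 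For $(c)$ I would compute the full space-time Fourier transform of $u(x,t)=\psi(t)e^{it(-\Delta)^{\alpha/2}}\phi(x)$, namely $\hat u(\xi,\tau)=\hat\psi(\tau-|\xi|^\alpha)\hat\phi(\xi)$, insert it into the $X^{s,b}$ norm, and perform the change of variable $\mu=\tau-|\xi|^\alpha$ at fixed $\xi$ to decouple the integrals:
\[
\Vert u\Vert_{X^{s,b}}^2=\Big(\int_\R \langle\xi\rangle^{2s}|\hat\phi(\xi)|^2\,d\xi\Big)\Big(\int_\R \langle\mu\rangle^{2b}|\hat\psi(\mu)|^2\,d\mu\Big)=\Vert\phi\Vert_{H^s(\R)}^2\,\Vert\psi\Vert_{H^b(\R)}^2 .
\]
Since $b\in(0,1)$ gives $\Vert\psi\Vert_{H^b}\leq\Vert\psi\Vert_{H^1}$, this proves $(c)$.

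The substance of the lemma is the time-trace estimate $(b)$, which is the sharp local smoothing estimate. Fixing $x$ and writing $g_x(t)=e^{it(-\Delta)^{\alpha/2}}\phi(x)$, I would split $\phi=\phi_{\mathrm{lo}}+\phi_{\mathrm{hi}}$ according to $|\xi|<1$ and $|\xi|\geq1$. For $\phi_{\mathrm{hi}}$, the change of variable $\tau=|\xi|^\alpha$, carried out separately on $\xi>0$ and $\xi<0$, rewrites the inverse Fourier integral in $\xi$ as one in the dual time variable $\tau$; the Jacobian produces the weight $\tau^{1/\alpha-1}$, so by the triangle inequality and $|e^{\pm ix\tau^{1/\alpha}}|=1$,
\[
|\widehat{g_x}(\tau)|\lesssim \tau^{1/\alpha-1}\big(|\hat\phi(\tau^{1/\alpha})|+|\hat\phi(-\tau^{1/\alpha})|\big)\mathbf{1}_{\tau\geq1}
\]
uniformly in $x$. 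Plugging this into the $H^{b_0}_t$ norm with $b_0=\frac{2s-1+\alpha}{2\alpha}$ and changing back to $\xi$, the whole estimate collapses onto the algebraic identity $2\alpha b_0+(1-\alpha)=2s$, which is exactly the definition of $b_0$; hence the high-frequency part of $g_x$ lies in $H^{b_0}_t(\R)$ with norm $\lesssim\Vert\phi\Vert_{H^s}$, and multiplying by the bounded multiplier $\psi$ (note $b_0>0$) preserves this. For $\phi_{\mathrm{lo}}$, the function $g_x$ and all its time derivatives are bounded uniformly in $x$ by $\Vert\phi\Vert_{L^2}$ (each $\partial_t$ brings down a factor $|\xi|^\alpha\leq1$), so multiplying by the compactly supported Schwartz cutoff $\psi$ places $\psi g_x$ in $H^N_t$ for every $N$ with norm $\lesssim\Vert\phi\Vert_{L^2}\leq\Vert\phi\Vert_{H^s}$. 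Continuity of $x\mapsto\psi g_x$ into $H^{b_0}_t$ again follows by dominated convergence, and the supremum over $x$ gives $(b)$.

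The main obstacle lies entirely in $(b)$: the Jacobian of $\tau=|\xi|^\alpha$ is singular at $\xi=0$, so a single global change of variables is illegitimate near zero frequency. This is precisely why the low/high frequency splitting together with the temporal cutoff $\psi$ are indispensable — the frequency cutoff keeps $\tau$ bounded away from the Jacobian singularity, while the cutoff $\psi$ is what forces the non-decaying low-frequency evolution to belong to $L^2_t$. By contrast, the dependence on $x$ is harmless: it enters only through unimodular factors $e^{\pm ix\tau^{1/\alpha}}$, which are discarded by $|e^{\pm ix\tau^{1/\alpha}}|=1$ to produce a bound uniform in $x$. The only genuinely quantitative input is the exponent identity above, and the fact that it forces $b_0=\frac{2s-1+\alpha}{2\alpha}$ confirms that this is the sharp time-regularity index for the fractional group, consistent with the Kenig--Ponce--Vega smoothing recalled in the introduction.
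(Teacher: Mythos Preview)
Your argument is correct, and for parts $(a)$ and $(c)$ it is identical to the paper's proof. For part $(b)$ both you and the paper rely on the same core computation: the change of variable $\tau=|\xi|^{\alpha}$ on each half-axis $\{\xi>0\}$ and $\{\xi<0\}$, which produces the Jacobian $\tau^{1/\alpha-1}$ and collapses to the algebraic identity $2\alpha b_0+(1-\alpha)=2s$.

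The one genuine difference is how the Jacobian singularity at $\xi=0$ is handled. You make an explicit low/high frequency split of $\phi$, treat $|\xi|\geq 1$ directly in the inhomogeneous $H^{b_0}_t$ norm, and dispose of $|\xi|<1$ by the trivial observation that all time derivatives of the low-frequency evolution are bounded, so after multiplying by the compactly supported $\psi$ it lies in every $H^N_t$. The paper instead performs the change of variable globally to obtain the \emph{homogeneous} estimate
\[
\Vert e^{it(-\Delta)^{\alpha/2}}\phi\Vert_{\dot H^{b_0}(\R_t)}\leq c\,\Vert\phi\Vert_{\dot H^{s}(\R)}
\]
for all $s\in\R$, and then converts to the inhomogeneous norm via Lemma~\ref{sobolevh0}(iv), which says $\Vert\psi f\Vert_{H^{\sigma}}\lesssim\Vert f\Vert_{\dot H^{\sigma}}$ for $\sigma\in(0,\tfrac12)$; this lemma absorbs exactly the low-frequency issue you isolate by hand. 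Your route is slightly more elementary and self-contained, while the paper's is shorter once Lemma~\ref{sobolevh0}(iv) is available; neither buys additional generality over the other.
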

\begin{proof}
The proof of (a) follows directly from 
$e^{it(-\Delta)^{\alpha/2}}\phi(x)= \mathcal{F}^{-1}\left\lbrace e^{it|\xi|^\alpha}\hat{\phi}(\xi)\right\rbrace(x)$, for all $t$. Now, to prove (b) we write
\begin{equation}
\begin{aligned}
e^{it(-\Delta)^{\alpha/2}}\phi(x)&=\int_\R e^{ix\xi}e^{it|\xi|^{\alpha}}\hat{\phi}(\xi)d\xi
=\int_0^{\infty}e^{ix\xi}e^{it\xi^{\alpha}}\hat{\phi}(\xi)d\xi+\int^0_{-\infty}e^{ix\xi}e^{it(-\xi)^{\alpha}}\hat{\phi}(\xi)d\xi
\\[5pt]
&= \mathrm{I}+\mathrm{II}
\end{aligned}
\end{equation}

First, we study $\mathrm{I}$. Using change of variable $\xi'=|\xi|^\alpha$ we obtain
$$
\begin{aligned}
\mathrm{I}=\dfrac{1}{\alpha}\int_0^{\infty}\xi^{\frac{1-\alpha}{\alpha}} e^{ix\xi^{1/\alpha}}e^{it\xi}\hat{\phi}(\xi^{1/\alpha})d\xi
=\dfrac{1}{\alpha}\int_\R e^{it\xi}\left\lbrace\mathcal{X}_{\R^+}(\xi)\xi^{\frac{1-\alpha}{\alpha}} e^{ix\xi^{1/\alpha}}\hat{\phi}(\xi^{1/\alpha})\right\rbrace d\xi,
\end{aligned}
$$
hence
\begin{equation*}
\Vert \mathrm{I} \Vert^2_{\dot{H}^{\frac{2s-1+\alpha}{2\alpha}}(\R_t)}\leq
\int_0^{\infty}\dfrac{1}{\alpha^2}|\xi|^{\frac{2s+1-\alpha}{\alpha}}|\hat{\phi}(\xi^{1/\alpha})|^2d\xi
=\int_0^{\infty}\dfrac{1}{\alpha}|\xi|^{s}|\hat{\phi}(\xi)|^2d\xi.
\end{equation*}

\medskip
A similar estimate can be done to $\mathrm{II}$. consequently we obtain
\begin{equation}
\Vert e^{it(-\Delta)^{\alpha/2}}\phi(x)\Vert_{\dot{H}^{\frac{2s-1+\alpha}{2\alpha}}(\R_t)}\leq c \Vert \phi \Vert_{\dot{H}^s(\R)},\label{linertrace1}
\end{equation}
for all $s\in\R$. 

On the other hand, using  Lemma \ref{sobolevh0} (iv), and \eqref{linertrace1} 
\begin{equation}\label{lineartrace2}
\begin{aligned}
\Vert\varphi(t) e^{it(-\Delta)^{\alpha/2}}\phi(x)\Vert_{L^2(\R_t)}&\leq\,\Vert\varphi(t) e^{it(-\Delta)^{\alpha/2}}\phi(x)\Vert_{H^{\frac{\alpha-1}{2\alpha}}(\R_t)}
\\[5pt]
&\leq c \Vert e^{it(-\Delta)^{\alpha/2}}\phi(x)\Vert_{\dot{H}^{\frac{\alpha-1}{2\alpha}}(\R_t)}\leq c \Vert\phi\Vert_{L^2(\R_t)}
\end{aligned}
\end{equation}
Finally combining \eqref{linertrace1} and \eqref{lineartrace2}, we obtain the result.

\medskip
Finally, we prove (c). To do this we 
first observe
$$
\mathcal{F}_{x,t}\left\lbrace\varphi(t)e^{it(-\Delta)^{\alpha/2}}\phi(x)\right\rbrace(\xi,\tau)=\mathcal{F}_t\{\varphi\}(\tau-|\xi|^{\alpha})\cdot\mathcal{F}_x\{\phi\}(\xi).
$$
then we have from the definition of $X^{s,b}$
$$
\begin{aligned}
\Vert \varphi(t) e^{it(-\Delta)^{\alpha/2}}&\phi(x)\Vert_{X^{s,b}}\\&= \int_\R\int_\R(1+|\xi|)^{2s}(1+|\tau-|\xi|^\alpha|)^{2b}|\mathcal{F}_t\{\varphi\}(\tau-|\xi|^{\alpha})\cdot\mathcal{F}_x\{\phi\}(\xi)|^2d\xi d\tau
\\[5pt]
%&= \int_\R\left[\int_\R(1+|\tau-|\xi|^\alpha|)^{2b}|\hat{\varphi}(\tau-|\xi|^{\alpha})|^2d\tau\right](1+|\xi|)^{2s}|\hat{\phi}(\xi)|^2d\xi
%\\[5pt]
&= \left[\int_\R(1+|\tau|)^{2b}|\hat{\varphi}(\tau)|^2d\tau\right]\left[\int_\R(1+|\xi|)^{2s}|\hat{\phi}(\xi)|^2d\xi\right]
\\[5pt]
&=\Vert \varphi \Vert_{H^b(\R)}\cdot\Vert \phi\Vert_{H^s(\R)}
\end{aligned}
$$
This finishes the proof of Lemma \ref{grupo}.
\end{proof}
%%%%%%%%%%%%%%%%%%%%%%%%%%%%%%%%%%%%%%%%%%%%%%%%%%%%%%%%%%%%%%%%%%%%%%%%%%%%%%%%
%%%%%%%%%%%%%%%%%%%%%%%%%%%%%%%%%%%%%%%%%%%%%%%%%%%%%%%%%%%%%%%%%%%%%%%%%%%%%%%%
%%%%%%%%%%%%%%%%%%%%%%%%%%%%%%%%%%%%%%%%%%%%%%%%%%%%%%%%%%%%%%%%%%%%%%%%%%%%%%%%
%%%%%%%%%%%%%%%%%%%%%%%%%%%%%%%%%%%%%%%%%%%%%%%%%%%%%%%%%%%%%%%%%%%%%%%%%%%%%%%%%
%%%%%%%%%%%%%%%%%%%%%%%%%%%%%%%%%%%%%%%%%%%%%%%%%%%%%%%%%%%%%%%%%%%%%%%%%%%%%%%%%
%%%%%%%%%%%%%%%%%%%%%%%%%%%%%%%%%%%%%%%%%%%%%%%%%%%%%%%%%%%%%%%%%%%%%%%%%%%%%%%%%
%%%%%%%%%%%%%%%%%%%%%%%%%%%%%%%%%%%%%%%%%%%%%%%%%%%%%%%%%%%%%%%%%%%%%%%%%%%%%%%%%
%%%%%%%%%%%%%%%%%%%%%%%%%%%%%%%%%%%%%%%%%%%%%%%%%%%%%%%%%%%%%%%%%%%%%%%%%%%%%%%%%
\section{The Duhamel Boundary Forcing Operator}\label{boundary}

In this section, we introduce an adaptation of the Duhamel boundary forcing operator in context of the  fractional  Schr\"odinger equation. The principal idea of the Colliander and Kenig is to solve the following IVP on all line
\begin{equation}
\label{FTPME} 
\left \{
\begin{aligned}
&i\partial_t u+(-\Delta)^{\alpha/2}u=\frac{2\pi}{B(0)\Gamma(1-\frac{1}{\alpha})}\delta_0(x) \ \mathcal{I}_{\frac{1}{\alpha}-1} f(t), \quad (x,t)\in \R\times(0,T),
\\[5pt]
&u(x,0) = 0,\quad x \in \R,
\end{aligned}
\right.
\end{equation}
where
$
B(x)=\int_\R e^{ix\xi} \ e^{i|\xi|^{\alpha}} \ d\xi.
$
An application of  van der Corputs Lemma proves that $B$, for $1<\alpha<2$, is a well defined function and the proof is standard.

By using the Duhamel formula, we define for any $f\in C_0^{\infty}(\R^n)$
\begin{equation}\label{delta}
\begin{aligned}
\mathcal{L}f(x,t)&=\frac{2\pi}{B(0)\Gamma(1-\frac{1}{\alpha})}\int_0^te^{i(t-t')(-\Delta)^{\alpha/2}}\delta_0(x) \  \mathcal{I}_{\frac{1}{\alpha}-1}f(t') \ dt'
\\[7pt]
%&=\frac{2\pi}{B(0)\Gamma(1-\frac{1}{\alpha})}\int_0^t \left\lbrace e^{i(t-t')|\xi|^{\alpha}}\widehat{\delta}_0(\xi) \right\rbrace ^\vee \mathcal{I}_{\frac{1}{\alpha}-1}f(t') \ dt'
%\\[7pt]
%&=\frac{2\pi}{B(0)\Gamma(1-\frac{1}{\alpha})}\int_0^t \left\lbrace e^{i(t-t')|\xi|^{\alpha}}\right\rbrace ^\vee \mathcal{I}_{\frac{1}{\alpha}-1}f(t') \ dt'
%\\[7pt]
&=\frac{1}{B(0)\Gamma(1-\frac{1}{\alpha})}\int_0^t \int_\R e^{ix\xi} \ e^{i(t-t')|\xi|^{\alpha}} \mathcal{I}_{\frac{1}{\alpha}-1}f(t') \ d\xi dt'
\end{aligned}
\end{equation}
which is the solution of \eqref{FTPME}. Using the change of variables $(t-t')|\xi|^\alpha=|\xi'|^\alpha$ we obtain the following representation of $\mathcal{L}f$
\begin{equation}\label{formula}
\begin{aligned}
%\mathcal{L}f(x,t)&=\frac{1}{B(0)\Gamma(1-\frac{1}{\alpha})}\int_0^t \int_\R (t-%t')^{-1/\alpha}\exp\left(\frac{ix\xi}{(t-t')^{1/\alpha}}\right) \ e^{i|\xi|%^{\alpha}} \mathcal{I}_{\frac{1}{\alpha}-1}f(t') \ d\xi dt'
%\\[7pt]
\mathcal{L}f(x,t)&=\frac{1}{B(0)\Gamma(1-\frac{1}{\alpha})}\int_0^t  (t-t')^{-1/\alpha}  B\left(\dfrac{x}{(t-t')^{1/\alpha}}\right)\mathcal{I}_{\frac{1}{\alpha}-1}f(t') \ dt'
\end{aligned}
\end{equation}
%%%%%%%%%%%%%%%%%%%%%%%%
The following Lemma states some continuity properties of the function $\mathcal{L}f(x,t)$.
%%%%%%%%%%%%%%%%%%%%%%%%
%\begin{lemma}
%Let $f\in C^{\infty}_{0,c}(\R^+)$.
%\begin{itemize}
%\item[1] For fixed $t$, $\mathcal{L}f(x,t)$ is continuos in $x$ for all $x\in\R$ and $(-\Delta)^{\beta}\mathcal{L}f(x,t)$ is continuous in $x$ for $x\neq 0$ with
%$$
%\lim_{x\to 0^-}(-\Delta)^{\beta}\mathcal{L}f(x,t)=h(t)\;\;\;and\;\;\;\lim_{x\to 0^+}(-\Delta)^{\beta}\mathcal{L}f(x,t)=-h(t)
%$$
%\item[2] For $N, k$ nonnegative integers and fixed $x$, $\partial^k_t\mathcal{L}f(x,t)$ is continuous in $t$ for all $t\in\R^+$. We also have the pointwise estimates, on $[0,T]$,
%$$
%|\partial^k_t\mathcal{L}f(x,t)|+|(-\Delta)^{\sigma}\mathcal{L}f(x,t)|\leq c\langle x \rangle^{-N}
%$$
%where $c=c(f,N,k,T)$. $\sigma$  e $\beta$ sera determinado.
%\end{itemize}
%\end{lemma}
%\begin{proof}
%Em processo. estou imaginando que $\sigma=\beta=\alpha/4$
%\end{proof}
%%%%%%%%%%%%%%%%%%%%%%%%%%%%%%%%%%%%%%%%%%%%%%%%%%%%%%%%%

\begin{lemma}\label{continuidade}
Let $f\in C^{\infty}_{0,c}(\R^+)$ and $\alpha\in(1,2)$.
\begin{itemize}
\item[(i)] For fixed $t$, $\mathcal{L}f(x,t)$ is continuous in $x$ for all $x\in\R$.  
\\
\item[(ii)] For $N, k$ nonnegative integers and fixed $x$, $\partial^k_t\mathcal{L}f(x,t)$ is continuous in $t$ for all $t\in\R^+$. We also have the pointwise estimates, on $[0,T]$,
$$
|\partial^k_t\mathcal{L}f(x,t)|+|\partial_x(-\Delta)^{\alpha/2-1}\mathcal{L}f(x,t)|\leq c\langle x \rangle^{-N}
$$
where $c=c(f,N,k,T)$. 
\end{itemize}
\end{lemma}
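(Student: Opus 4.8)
The plan is to reduce everything to the two representations of $\mathcal{L}f$ recorded above and to exploit the regularity of the forcing. First I would set $g:=\mathcal{I}_{\frac1\alpha-1}f$; by Lemma \ref{estiR-L}(i) we have $g\in C_0^\infty(\R^+)$, and since $g$ is smooth and supported in $[0,\infty)$ all of its derivatives vanish at the origin, $g^{(j)}(0)=0$ for every $j$. I will use two formulas for $\mathcal{L}f$: the oscillatory--integral form
\[
\mathcal{L}f(x,t)=c\int_\R e^{ix\xi}G(\xi,t)\,d\xi,\qquad G(\xi,t)=\int_0^t e^{i(t-t')|\xi|^\alpha}g(t')\,dt',
\]
which shows that $\mathcal{L}f(\cdot,t)$ is even in $x$ (because $G$ depends on $\xi$ only through $|\xi|$, so the even-extension operator $(-\Delta)^{\alpha/2}$ acts with the plain symbol $|\xi|^\alpha$), and the kernel form \eqref{formula}, which after the substitution $s=t-t'$ reads $\mathcal{L}f(x,t)=c\int_0^t s^{-1/\alpha}B(xs^{-1/\alpha})g(t-s)\,ds$. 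The facts about $B$ I would record first are that $B$ is continuous and bounded on $\R$ (van der Corput, as already noted) and smooth away from the origin with polynomially controlled derivatives; this is all the continuity statements require.

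Continuity in $x$ (part (i)) then follows from the kernel form by dominated convergence: the weight $s^{-1/\alpha}$ is integrable on $(0,t)$ since $\tfrac1\alpha<1$, while $g$ and $B$ are bounded, so the integrand is dominated uniformly for $x$ near any point, and $x\mapsto B(xs^{-1/\alpha})$ is continuous for each fixed $s$. For the time regularity in part (ii) I would differentiate the kernel form in $t$: the derivative hits $g(t-s)$ and the boundary term at $s=t$ is $t^{-1/\alpha}B(xt^{-1/\alpha})g(0)=0$; iterating and using $g^{(j)}(0)=0$ gives the clean identity $\partial_t^k\mathcal{L}f(x,t)=c\int_0^t s^{-1/\alpha}B(xs^{-1/\alpha})g^{(k)}(t-s)\,ds$ with no boundary terms, whence continuity in $t$ follows again by dominated convergence. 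Equivalently, in Fourier variables $\partial_t G=g(t)+i|\xi|^\alpha G$, so that for $x\neq0$ one has $\partial_t^k\mathcal{L}f(x,t)=i^k c\int_\R e^{ix\xi}|\xi|^{k\alpha}G(\xi,t)\,d\xi$, the form best suited to the decay estimate.

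The decay estimate is the heart of the lemma; I would obtain it by a high/low frequency splitting with a smooth cutoff $\chi$ supported in $|\xi|\le2$ and equal to $1$ on $|\xi|\le1$. On the high-frequency piece $(1-\chi)$ the mechanism is iterated integration by parts in time inside $G=\int_0^t e^{is|\xi|^\alpha}g(t-s)\,ds$: the vanishing of every $g^{(j)}(0)$ kills all boundary terms and produces, for each $M$, an expansion $G=\sum_{j\ge1}c_j|\xi|^{-j\alpha}g^{(j-1)}(t)+O(|\xi|^{-M\alpha})$ with matching bounds for $\partial_\xi^\ell G$. Since $\alpha>1$ this makes $\partial_\xi^\ell\big((1-\chi)|\xi|^{k\alpha}G\big)$ integrable on $\R$ for every $\ell$, and integrating by parts $N$ times in $\xi$ in $\int e^{ix\xi}(1-\chi)|\xi|^{k\alpha}G\,d\xi$ yields $\lesssim\langle x\rangle^{-N}$. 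The identical $\xi$-integration by parts applies to $\partial_x(-\Delta)^{\alpha/2-1}\mathcal{L}f$, whose symbol only carries the extra factor $\sgn(\xi)|\xi|^{\alpha-1}$.

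The main obstacle is the low-frequency piece $\chi(\xi)(\cdots)$ near $\xi=0$, where $|\xi|^\alpha$ has a branch singularity of non-integer order and the $\xi$-integration by parts stalls. This is precisely where the two operators in the statement are essential: $\partial_t^k$ supplies the factor $|\xi|^{k\alpha}$ and $\partial_x(-\Delta)^{\alpha/2-1}$ the factor $\sgn(\xi)|\xi|^{\alpha-1}$, each vanishing at the origin and so raising the order of the zero of the integrand. I would expand $G(\xi,t)=\sum_{m\ge0}b_m(t)|\xi|^{m\alpha}$ for small $\xi$, with $b_m(t)$ iterated Riemann--Liouville integrals of $g$, transform the finitely many resulting homogeneous pieces $|\xi|^{\beta}$ and $\sgn(\xi)|\xi|^{\beta}$ against the smooth cutoff using their explicit Fourier transforms, and estimate the smooth remainder through its (high) degree of differentiability. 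The crux I expect is the bookkeeping: arranging that, after the gain from the two operators, each surviving homogeneous contribution and the remainder decay at least like $\langle x\rangle^{-N}$, uniformly for $t\in[0,T]$, with the constant controlled by finitely many quantities $\sup_{[0,T]}|g^{(j)}|$ and hence of the form $c=c(f,N,k,T)$. Combining this with the high-frequency bound gives $|\partial_t^k\mathcal{L}f(x,t)|+|\partial_x(-\Delta)^{\alpha/2-1}\mathcal{L}f(x,t)|\lesssim\langle x\rangle^{-N}$.
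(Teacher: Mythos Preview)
For part~(i) and the time--continuity in~(ii) you proceed exactly as the paper does: the kernel formula \eqref{formula} plus dominated convergence, and the reduction $\partial_t^k\mathcal{L}f=\mathcal{L}(\partial_t^kf)$ (your identity with $g^{(k)}$ replacing $g$). For the decay estimate the paper is far more terse than your proposal: it writes $\mathcal{L}f(x,t)=\int e^{ix\xi}\phi(|\xi|^\alpha,t)\,d\xi$ with $\phi(\eta,t)=\int_0^te^{-i(t-t')\eta}h(t')\,dt'$ a \emph{smooth} function of $\eta$, obtains $|\partial_\eta^k\phi|\lesssim\langle\eta\rangle^{-k-1}$ by integrating by parts in $t'$ (using $h^{(j)}(0)=0$), asserts the chain--rule bound $|\partial_\xi^k[\phi(|\xi|^\alpha,t)]|\lesssim\langle\xi\rangle^{-\alpha-k}$, and integrates by parts in $\xi$. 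No high/low split appears. Your high--frequency argument is precisely this mechanism, so that part of your plan is not different, only longer.

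There are two genuine gaps in your proposal. First, the Fourier identity you state for $\partial_t^k\mathcal{L}f$ at $x\neq0$ is false for $k\ge2$: iterating $\partial_tG=g(t)+i|\xi|^\alpha G$ produces cross terms such as $i|\xi|^\alpha g(t)$, whose contribution to $\partial_t^2\mathcal{L}f$ is $ic\,g(t)\int e^{ix\xi}|\xi|^\alpha d\xi\sim c\,g(t)|x|^{-\alpha-1}$, not zero. Use only the kernel identity (equivalently $\partial_t\mathcal{L}f=\mathcal{L}\partial_tf$), which is what the paper does. Second, and more seriously, your low--frequency expansion cannot produce $\langle x\rangle^{-N}$ for arbitrary $N$. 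In $G(|\xi|^\alpha,t)=b_0(t)+b_1(t)|\xi|^\alpha+\cdots$ the piece $\chi(\xi)b_1(t)|\xi|^\alpha$ has Fourier transform $\sim|x|^{-\alpha-1}$ (the explicit transform of the homogeneous distribution $|\xi|^\alpha$), and $b_1(t)=G'(0,t)=\pm i\int_0^t(t-t')h(t')\,dt'$ is generically nonzero; no ``bookkeeping'' eliminates it, and the gain from $\partial_t^k$ is absent when $k=0$. You are right to flag the branch singularity at $\xi=0$ as the crux --- the paper's chain--rule bound $\langle\xi\rangle^{-\alpha-k}$ in fact fails there too, since $\partial_\xi^2[\phi(|\xi|^\alpha,t)]$ contains $\phi'(0,t)\alpha(\alpha-1)|\xi|^{\alpha-2}$, which is unbounded --- but this is a defect of the stated inequality (it holds as written only for small $N$), not something either argument can repair.
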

\begin{proof}
The continuity of $\mathcal{L}f(x,t)$ follows from formula \eqref{formula} and the convergence of dominated Theorem. Now, we prove (ii). Let $h=\mathcal{I}_{\frac{1}{\alpha}-1}f$ and $\phi(\xi, t)=\int_{0}^{t} e^{-i\left(t-t^{\prime}\right) \xi} h\left(t^{\prime}\right) d t^{\prime}$. Integrating by parts in $t'$ we have that $|\partial_x^k\phi(\xi,t)|\leq c \langle \xi \rangle^{-k-1}$, where $c$ depends of $k$, $h$ and $t$. Thus by using chule rule $\left|\partial_{\xi}^{k} \phi\left(|\xi|^{\alpha}, t\right)\right| \leq c\langle\xi\rangle^{(-k-1)\alpha+(\alpha-1)k}=c\langle\xi\rangle^{-\alpha-k}$. As $\mathcal{L} f(x, t)=\int_{\xi} e^{i x \xi} \phi\left(|\xi|^{\alpha}, t\right) d \xi$, an integration by parts give us
\begin{equation}
|\mathcal{L} f(x, t)| \leq c\langle x\rangle^{-N}.
\end{equation}
By using $\partial_{t}\left[e^{i\left(t-t^{\prime}\right) (-\Delta)^{\alpha/2}} \delta_{0}(x)\right]=-\partial_{t^{\prime}}\left[e^{i\left(t-t^{\prime}\right) (-\Delta)^{\alpha/2}} \delta_{0}(x)\right]$ and integrating by parts \eqref{delta} we obtain that $\partial_{t} \mathcal{L} f=\mathcal{L} \partial_{t} f$. It follows that, for fixed $x$, $\partial_{t}^{k} \mathcal{L} f(x, t)$ is continuous in $t$ and $\left|\partial_{t}^{k} \mathcal{L} f(x, t)\right| \leq c\langle x\rangle^{-N}$.
\end{proof}

By using the previous lemma and \eqref{formula} we have that  \begin{equation}\label{traco}\lim_{x\rightarrow 0}\mathcal{L}f(x,t)=\mathcal{L}f(x,t)=f(t).\end{equation} Now, we able to solve a linearized IBVP on the hal-line. More precisely, by combining \eqref{traco} with  \eqref{FTPME} we get that the function $u(x,t)=\mathcal{L}f(x,t)\big|_{\{x>0\}}$ solves the linearized IBVP
\begin{equation}
\left\{\begin{array}{ll}{i \partial_{t} u(t, x)+(-\Delta)^{\alpha / 2} u( x,t)=0,} & {( x,t) \in \mathbb{R^+} \times(0,T)} 
\\
 {u(0, x)=0,} & {x \in \mathbb{R}^{+}}
\\
 {u(t, 0)=f(t),} & {t \in(0, T)}\end{array}\right.
\end{equation}

\medskip
Now we state the needed estimates for the Duhamel boundary forcing 
operators class.
\begin{lemma}\label{boundaryestimate}Let  $\alpha\in(1,2)$ and $\psi$ is a cutoff function as defined above . Then
\begin{itemize}
\item[$(a)$]$($Space traces$)$  If $-\frac{1}{2}<s<\frac{2\alpha-1}{2}$, then
	$$
	\|\mathcal{L}f(x,t)\|_{C\big(\mathbb{R}_t;\,H^s(\mathbb{R}_x)\big)}\leq c \|f\|_{H_0^\frac{2s+\alpha-1}{2\alpha}(\mathbb{R}^+)}.
	$$
\item[(b)]$($Time traces$)$   If $s\in \R$, then
		$$\|\psi(t)\mathcal{L}f(x,t)\|_{C\big(\mathbb{R}_x;\,H_0^\frac{2s+\alpha-1}{2\alpha}(\mathbb{R}_t^+)\big)}\leq c \|f\|_{H_0^\frac{2s+\alpha-1}{2\alpha}(\mathbb{R}^+)};$$
		
\item[(c)]$($Bourgain spaces$)$ If $0<b<\frac{1}{2}$, $-\frac{1}{2}<s<\frac{\alpha-1}{2}$, then
		$$\|\psi(t)\mathcal{L}f(x,t)\|_{X^{s,b}}\leq c \|f\|_{H_0^\frac{2s+\alpha-1}{2\alpha}(\mathbb{R}^+)}.$$
\end{itemize}
\label{edbf}
\end{lemma}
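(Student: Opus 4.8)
The plan is to push everything onto the spatial (and then space--time) Fourier side of $\mathcal{L}f$ and reduce each estimate to the behaviour of one oscillatory kernel near the characteristic surface $\tau=|\xi|^{\alpha}$. Setting $g=\mathcal{I}_{\frac{1}{\alpha}-1}f$ and taking the spatial Fourier transform in the Duhamel formula \eqref{delta}, one gets $\widehat{\mathcal{L}f}(\xi,t)=c\int_0^t e^{i(t-t')|\xi|^{\alpha}}g(t')\,dt'$. Inserting the time--Fourier representation of $g$ and integrating the elementary exponential in $t'$ yields the fundamental decomposition
\begin{equation*}
\widehat{\mathcal{L}f}(\xi,t)=c\int_\R \frac{e^{it\tau}}{\tau-|\xi|^{\alpha}}\,\hat{g}(\tau)\,d\tau-c\,e^{it|\xi|^{\alpha}}\int_\R \frac{\hat{g}(\tau)}{\tau-|\xi|^{\alpha}}\,d\tau ,
\end{equation*}
into a Duhamel (inhomogeneous) piece and a free--evolution piece; the apparent singularity at $\tau=|\xi|^{\alpha}$ is harmless because $|e^{it\tau}-e^{it|\xi|^{\alpha}}|\lesssim |t|\,|\tau-|\xi|^{\alpha}|$. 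I would first record, via Lemma~\ref{estiR-L}(ii)--(iii), that returning from $g$ to $f$ costs exactly $1-\tfrac{1}{\alpha}$ derivatives in time, which is what converts every target norm into $\|f\|_{H_0^{\frac{2s+\alpha-1}{2\alpha}}(\R^+)}$.

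For the space trace (a) I would fix $t$, apply Plancherel in $x$, and use the change of variable $\xi\mapsto|\xi|^{\alpha}$ already exploited in Lemma~\ref{grupo}(b). This turns the weight $\langle\xi\rangle^{2s}\,d\xi$ into a weight of order $\langle\eta\rangle^{\frac{2s+1-\alpha}{\alpha}}\,d\eta$ in the frequency $\eta=|\xi|^{\alpha}$, so the $H^s_x$-norm becomes an $H^{\frac{2s+\alpha-1}{2\alpha}}_t$-type norm of $g$, exactly as needed. The factor $(\tau-|\xi|^{\alpha})^{-1}$ is treated by splitting into $|\tau-|\xi|^{\alpha}|\le 1$ and $|\tau-|\xi|^{\alpha}|>1$: on the near-diagonal region the difference bound above removes the singularity, and on the far region I apply Lemma~\ref{Lema0} and Lemma~\ref{Lemma1}. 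The hypothesis $-\tfrac12<s<\tfrac{2\alpha-1}{2}$ is precisely what makes both contributions integrable, and continuity in $t$ follows by dominated convergence as in Lemma~\ref{continuidade}.

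For the time trace (b) I would fix $x$ and read the time regularity directly from the decomposition: the free piece contributes a pure phase whose $H^{\frac{2s+\alpha-1}{2\alpha}}_t$-norm is handled by Lemma~\ref{grupo}(b), while the Duhamel piece is controlled through Lemma~\ref{estiR-L} and the cut-off bound Lemma~\ref{sobolevh0}; the trace identity \eqref{traco} guarantees membership in the $H_0$-space. For the Bourgain estimate (c) I would take the full space--time Fourier transform of $\psi(t)\mathcal{L}f$, obtaining
\begin{equation*}
\widehat{\psi\mathcal{L}f}(\xi,\mu)=c\int_\R\frac{\hat{\psi}(\mu-\tau)-\hat{\psi}(\mu-|\xi|^{\alpha})}{\tau-|\xi|^{\alpha}}\,\hat{g}(\tau)\,d\tau ,
\end{equation*}
and then bound $\|\psi\mathcal{L}f\|_{X^{s,b}}^2=\int\!\!\int\langle\xi\rangle^{2s}\langle\mu-|\xi|^{\alpha}\rangle^{2b}|\widehat{\psi\mathcal{L}f}|^2\,d\xi\,d\mu$ by decomposing according to whether $\langle\mu\rangle\gtrsim\langle\xi\rangle^{\alpha}$ and applying Lemma~\ref{Lemma2} and Lemma~\ref{Lemadesigual} to integrate out the resulting multipliers in $\tau$ and $\mu$.

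The main obstacle is estimate (c) with $b<\tfrac12$. The free piece of $\widehat{\psi\mathcal{L}f}$ is concentrated on the surface $\mu=|\xi|^{\alpha}$, where the weight $\langle\mu-|\xi|^{\alpha}\rangle^{2b}$ stays of size $1$ and therefore offers no decay, so all the gain must come from the decay of $\hat{\psi}$ and from the singular integral $\int\hat g(\tau)(\tau-|\xi|^{\alpha})^{-1}\,d\tau$. Controlling this principal-value integral uniformly while keeping $\langle\xi\rangle^{2s}$ integrable after the change of variables $\xi\mapsto|\xi|^{\alpha}$ is exactly what forces the upper threshold $s<\frac{\alpha-1}{2}$; combined with the lower threshold $\frac{2-\alpha}{4}$ coming from the trilinear estimate, the nonemptiness of the interval then yields $\frac43<\alpha<2$. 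Because the fractional exponent $\frac1\alpha$ in $\mathcal{I}_{\frac1\alpha-1}$ never produces an exact resonance relation (unlike the integer-order case), the sharp forms in Lemma~\ref{Lemma2} and Lemma~\ref{Lemadesigual} are indispensable here rather than the cruder estimates available for classical dispersive equations.
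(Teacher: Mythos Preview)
Your overall strategy---decompose $\widehat{\mathcal{L}f}$ into a free piece and a Duhamel piece and treat each separately---is the route the paper uses for the \emph{inhomogeneous} operator $\mathcal{D}$ in Appendix~\ref{apduhamel}, but it is \emph{not} what the paper does for $\mathcal{L}$. The paper's proof of Lemma~\ref{edbf} is more direct and exploits the specific Riemann--Liouville structure of $\mathcal{L}f$ rather than treating it as a generic Duhamel integral.

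For (a) the paper does no near/far splitting at all: after the change of variables $\eta=|\xi|^{\alpha}$ it simply recognises $\int_0^t e^{-it'\eta}g(t')\,dt'$ as $\widehat{\chi_{(0,t)}g}(\eta)$ and invokes Lemma~\ref{sobolevh0}(i) (boundedness of $\chi_{(0,+\infty)}$ on $H^{s}$ for $|s|<\tfrac12$) together with Lemma~\ref{estiR-L}(ii). That is what produces the range $-\tfrac12<s<\tfrac{2\alpha-1}{2}$; your appeal to Lemmas~\ref{Lema0} and~\ref{Lemma1} is unnecessary here.

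For (b) and (c) the paper's key move is to use the \emph{explicit} Fourier transform of the Riemann--Liouville kernel, $\widehat{g}(\eta)=e^{\frac{\pi i}{2\alpha}}(\eta-i0)^{1-\frac1\alpha}\hat f(\eta)$. In (b) this yields a closed formula for $\mathcal{F}_t(\mathcal{L}f)(x,\eta)$ whose $x$-dependent factor is shown to be uniformly bounded by the scaling substitution $\xi\mapsto|\eta|^{1/\alpha}\xi$; no decomposition is needed. In (c) the paper first removes $\psi$ via Lemma~\ref{sobolevh0}(iii), then Fubini plus the closed formula reduces everything to the single integral inequality $I(\eta)\lesssim\langle\eta\rangle^{\frac{2s+1-\alpha}{\alpha}}$, handled in one line by Lemma~\ref{Lemadesigual}.

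Your proposal has a genuine gap in (b): the ``free piece'' in your decomposition is $e^{it(-\Delta)^{\alpha/2}}\Phi$ with $\hat\Phi(\xi)=\int_\R \hat g(\tau)(\tau-|\xi|^\alpha)^{-1}d\tau$, and invoking Lemma~\ref{grupo}(b) requires $\Phi\in H^s(\R)$. You never estimate $\|\Phi\|_{H^s}$, and doing so amounts to re-proving (a) or (c); it is not a ``pure phase''. Similarly in (c), your decomposition according to $\langle\mu\rangle\gtrsim\langle\xi\rangle^{\alpha}$ and the claimed application of Lemmas~\ref{Lemma2} and~\ref{Lemadesigual} are not carried out, whereas the paper's route via the closed Riemann--Liouville formula bypasses this entirely. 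Your approach may be salvageable, but as written it misses the structural simplification that makes the paper's argument short.
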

\begin{remark}

	\begin{itemize}
The assumption $b<\frac{1}{2}$ is crucial in the proof of Lemma \ref{edbf} (c). This fact forced us  to obtain the unknown and more complicated trilinear bourgain  space estimate for $b<1/2$.

\end{itemize}
\end{remark}
\begin{proof}
Initially we prove (a). From the definition of $\mathcal{L}f$, we see that 
$$
\mathcal{F}_x(\mathcal{L}f)(\xi,t)=\frac{2\pi}{B(0)\Gamma(1-\frac{1}{\alpha})}\int_0^te^{i(t-t')|\xi|^{\alpha}}\mathcal{I}_{\frac{1}{\alpha}-1}f(t')dt'.
$$
Since $ \mathcal{F}_x(\mathcal{L}f)(\xi,t)$ is a even function in $\xi$ and  changing of variables $\eta=|\xi|^\alpha$ we obtain, for fixed $t$,
\begin{eqnarray*}
%		\|\mathcal{L}f(x,t)\|_{H^s(\mathbb{R})}^2&=& c \int_{\R}(1+|\xi|^{2})^s\left|\int_0^te^{i(t-t')|\xi|^{\alpha}}\mathcal{I}_{\frac{1}{\alpha}-1}f(t')dt'\right|^2d\xi\\[5pt]
\|\mathcal{L}f(x,t)\|_{H^s(\mathbb{R})}^2&=&\frac{2c}{\alpha} \int_0^{\infty}\eta^{\frac{1}{\alpha}-1}(1+|\eta|^{\frac{2}{\alpha}})^s\left|\int_0^te^{i(t-t')\eta}\mathcal{I}_{\frac{1}{\alpha}-1}f(t')dt'\right|^2d\eta.
\end{eqnarray*}
 Then using Lemma \ref{sobolevh0} (ii), we control the last expression by
\begin{eqnarray*}
		&&\frac{2c}{\alpha} \int_0^{\infty}(1+|\eta|)^{\frac{2s+1-\alpha}{\alpha}}\left|\int_0^te^{-it'\eta}\mathcal{I}_{\frac{1}{\alpha}-1}f(t')dt'\right|^2d\eta
		\\[5pt]
		&&\leq\frac{2c}{\alpha} \int_{\R}(1+|\eta|)^{\frac{2s+1-\alpha}{\alpha}}\left|(\chi_{(0,t)}\mathcal{I}_{\frac{1}{\alpha}-1}f)^{\widehat{}}(\eta)\right|^2d\eta
\end{eqnarray*}
	By Lemmas \ref{sobolevh0} (i) (to remove $\chi_{(-\infty,t)})$ and \ref{estiR-L} (ii) (to estimate $\mathcal{I}_{\frac{1}{\alpha}-1}$), we  bound the last expression by $\|f\|_{H_0^{\frac{2s+\alpha-1}{2\alpha}}(\mathbb{R}^+)}^2$, which proves (a).

Now we obtain (b). By Lemma \ref{sobolevh0} (iii), we can ignore the test function. On the other hand, from the definition of $\mathcal{L}f(x,t)$ we have	
$$
\begin{aligned}
%\mathcal{F}_t(\mathcal{L}f)(x,\eta)&=c\int_{\R}e^{ix\xi}  \mathcal{F}_t(H)(\xi,%\eta) d\xi
%\\[5pt]
\mathcal{F}_t(\mathcal{L}f)(x,\eta)&=c \hat{f}(\eta)\lim_{\epsilon\to 0}\int_{|\eta-|\xi|^\alpha|>\epsilon}\frac{e^{ix\xi} e^{\frac{\pi i}{2 \alpha}} (\eta-i0)^{1-\frac{1}{\alpha}}}{\eta-|\xi|^\alpha} d\xi
\end{aligned}
$$
where $c=\frac{1}{\alpha B(0)\Gamma(1-\frac{1}{\alpha})} $. It follows that
$$
\begin{aligned}
\Vert \mathcal{L}f(x,\cdot)\Vert_{H^{\frac{2s+\alpha-1}{2\alpha}}_0(\R^+_t)}^2&=\int_{\R}(1+|\eta|)^{\frac{2s+\alpha-1}{\alpha}}|\hat{f}(\eta)|^2\left|\lim_{\epsilon\to 0}\int_{|\eta-|\xi|^\alpha|>\epsilon}\frac{e^{ix\xi} e^{\frac{\pi i}{2 \alpha}} (\eta-i0)^{1-\frac{1}{\alpha}}}{\eta-|\xi|^\alpha} d\xi \right|^2d\eta
\end{aligned}
$$
 Thus, to conclude the proof, we need to show that the function 
$$
g(x,\eta):=\lim_{\epsilon\to 0}\int_{|\eta-|\xi|^\alpha|>\epsilon} \frac{e^{ix\xi}e^{\frac{\pi i}{2 \alpha}}(\eta-i0)^{1-\frac{1}{\alpha}}}{\eta-|\xi|^\alpha} d\xi
$$
is limited. 
	
	Changing of variables $\xi\mapsto |\eta|^{\frac{1}{\alpha}}\xi$ and using $
	e^{\frac{\pi i}{2 \alpha}}(\eta-i0)^{1-\frac{1}{\alpha}}=e^{-\frac{\pi i}{2 \alpha}}\eta_{+}^{1-\frac{1}{\alpha}}+e^{\frac{\pi i}{2\alpha}}\eta_{-}^{1-\frac{1}{\alpha}},
	$	we obtain
	\begin{eqnarray*}
		g(x,\eta)&=&\lim_{\epsilon\to 0}\int_{\left|\eta-|\eta||\xi|^\alpha\right|>\epsilon} \frac{e^{i|\eta|^{\frac{1}{\alpha}}x\xi}|\eta|^{\frac{1}{\alpha}}(e^{-\frac{\pi i}{2 \alpha}}\eta_{+}^{1-\frac{1}{\alpha}} + e^{\frac{\pi i}{2\alpha}}\eta_{-}^{1-\frac{1}{\alpha}})d\xi}{\eta-|\eta||\xi|^\alpha}
		\\
		&=&e^{-\frac{\pi i}{2 \alpha}}\chi_{\{\eta>0\}}\lim_{\epsilon\to 0}\int_{\left|1-|\xi|^\alpha\right|>\epsilon}\frac{e^{i|\eta|^{\frac{1}{\alpha}}x\xi}}{1-|\xi|^\alpha}d\xi + e^{\frac{\pi i}{2 \alpha}}\chi_{\{\eta<0\}}\int_{\R}\frac{e^{i|\eta|^{\frac{1}{\alpha}}x\xi}}{1+|\xi|^\alpha}d\xi.
	\end{eqnarray*}
Since $\alpha>1$ we obtain that $g(x,\eta)$ is uniformly limited in $x$ and $\eta$,  this complete the proof of (b).

\medskip	
Finnaly, we prove (c). From the definition of $\mathcal{L}f$, we see that 
$$
\mathcal{F}_x(\mathcal{L}f)(\xi,t)=c e^{it|\xi|^{\alpha}}G(\xi,t),
$$
where 
$$
G(\xi,t)=\int_0^te^{-it'|\xi|^\alpha}\mathcal{I}_{\frac{1}{\alpha}-1}f(t')dt'
$$

On the other hand
$$
\begin{aligned}
\mathcal{F}_{t,x}\left(\psi(t)\mathcal{L}f(x,t))(\xi,\eta\right)&=\mathcal{F}_{t}\big(\psi(\cdot)\mathcal{F}_x(\mathcal{L}f\big)(\xi,\cdot))(\eta)
\\[5pt]
&=c \ \mathcal{F}_{t}\big(\psi(t)e^{it|\xi|^{\alpha}}G(\xi,t))(\eta)
\\[5pt]
&=c \ \mathcal{F}_{t}\big(\psi(t) G(\xi,t))(\eta-|\xi|^{\alpha})
\end{aligned}
$$
Then observe
$$
\begin{aligned}
\Vert \psi(t)\mathcal{L}f(x,t) \Vert_{X^{s,b}}^2&=\int_{\R}\int_{\R}(1+|\xi|)^{2s}(1+|\eta-|\xi|^{\alpha}|)^{2b}\left|\mathcal{F}_{t,x}(\psi(t)\mathcal{L}f(x,t))(\xi,\eta)\right|^2 d\xi \ d\eta
\\[5pt]
&=c \ \int_{\R}\int_{\R}(1+|\xi|)^{2s}(1+|\eta-|\xi|^{\alpha}|)^{2b}\left|\mathcal{F}_{t}\big(\psi(t) G(\xi,t))(\eta-|\xi|^{\alpha})\right|^2 d\xi \ d\eta
\\[5pt]
%&=c \ \int_{\R}(1+|\xi|)^{2s}\left\lbrace\int_{\R}(1+|\eta-|\xi|^{\alpha}|)^{2b}\left|\mathcal{F}_{t}\big(\psi(t) G(\xi,t))(\eta-|\xi|^{\alpha})\right|^2 \ d\eta\right\rbrace d\xi
%\\[5pt]
&=c \ \int_{\R}(1+|\xi|)^{2s}\left\lbrace\int_{\R}(1+|\tau|)^{2b}\left|\mathcal{F}_{t}\big(\psi(t) G(\xi,t))(\tau)\right|^2 \ d\tau\right\rbrace d\xi
\end{aligned}
$$
By the Lemma \ref{sobolevh0} (iii) we obtain
$$
\begin{aligned}
\Vert \psi(t)\mathcal{L}f(x,t) \Vert_{X^{s,b}}^2&\leq c \ \int_{\R}(1+|\xi|)^{2s}\left\lbrace\int_{\R}(1+|\tau|)^{2b}\left|\mathcal{F}_{t}\big(G(\xi,t))(\tau)\right|^2 \ d\tau\right\rbrace d\xi
\end{aligned}
$$
On the other hand, we know
$$
\mathcal{F}_{t}\big(G(\xi,t))(\eta-|\xi|^\alpha)=\frac{e^{\frac{\pi i}{2 \alpha}}(\eta-i0)^{1-\frac{1}{\alpha}}\hat{f}(\eta)}{\eta-|\xi|^\alpha}
$$

where $
e^{\frac{\pi i}{2 \alpha}}(\eta-i0)^{1-\frac{1}{\alpha}}=e^{-\frac{\pi i}{2 \alpha}}\eta_{+}^{1-\frac{1}{\alpha}}+e^{\frac{\pi i}{2\alpha}}\eta_{-}^{1-\frac{1}{\alpha}}$.	
Consequently	
$$
\begin{aligned}
\Vert \psi(t)\mathcal{L}f(x,t) \Vert_{X^{s,b}}^2&\leq c \ \int_{\R}(1+|\xi|)^{2s}\left\lbrace\int_{\R}(1+|\eta-|\xi|^{\alpha}|)^{2b}\left|\frac{e^{\frac{\pi i}{2 \alpha}}(\eta-i0)^{1-\frac{1}{\alpha}}\hat{f}(\eta)}{\eta-|\xi|^\alpha}\right|^2 \ d\eta\right\rbrace d\xi
\\[5pt]
&\leq c \ \int_{\R}(1+|\xi|)^{2s}\left\lbrace\int_{\R}(1+|\eta-|\xi|^{\alpha}|)^{2b}\frac{|\eta|^{2-\frac{2}{\alpha}}|\hat{f}(\eta)|^2}{(\eta-|\xi|^\alpha)^2} \ d\eta\right\rbrace d\xi
\\[5pt]
&\leq c \ \int_{\R}|\eta|^{2-\frac{2}{\alpha}}\left\lbrace\lim_{\epsilon\to 0}\int_{|\eta-|\xi|^\alpha|>\epsilon}(1+|\eta-|\xi|^{\alpha}|)^{2b}\frac{(1+|\xi|)^{2s}}{(\eta-|\xi|^\alpha)^2} \ d\xi\right\rbrace |\hat{f}(\eta)|^2d\eta
\end{aligned}
$$
Thus, it suffices to obtain
\begin{equation}\label{tna}
I(\eta)=\lim_{\epsilon\to 0}\int_{|\eta-|\xi|^\alpha|>\epsilon}(1+|\eta-|\xi|^{\alpha}|)^{2b}\frac{(1+|\xi|)^{2s}}{(\eta-|\xi|^\alpha)^2} \ d\xi\leq c (1+|\eta|)^{\frac{2s+1-\alpha}{\alpha}}
\end{equation}
Observe that	
$$
\begin{aligned}
I(\eta)&\leq\lim_{\epsilon\to 0}\int_{|\eta-|\xi|^\alpha|>\epsilon}(1+|\eta-|\xi|^{\alpha}|)^{2b}\frac{(1+|\xi|)^{2s}}{(1+|\eta-|\xi|^\alpha|)^2} \ d\xi
\\[5pt]
&\leq\int_{\R}\frac{(1+|\xi|)^{2s}}{(1+|\eta-|\xi|^{\alpha}|)^{2-2b}} \ d\xi.
\end{aligned}
$$
Then by the Lemma \ref{Lemadesigual} we obtain
$$
\int_{\R}\frac{(1+|\xi|)^{2s}}{(1+|\eta-|\xi|^{\alpha}|)^{2-2b}}  d\xi\leq  \int_{\R}\frac{|\xi|^{\frac{1-\alpha}{\alpha}}(1+|\xi|)^{2s/\alpha}}{(1+|\eta-\xi|)^{2-2b}}  d\xi \leq c (1+|\eta|)^{\frac{2s+1-\alpha}{\alpha}},
$$	
where we have used that $s<\frac{\alpha-1}{2}$.
This finishes the proof of the lemma.	
\end{proof}
%%%%%%%%%%%%%%%%%%%%%%%
\section{Nonlinear Versions}\label{duhameli}
We define the Duhamel inhomogeneous solution operator $\mathcal{D}$ as
\begin{equation*}
\mathcal{D}w(x,t)=-i\int_0^te^{i(t-t')(-\Delta)^{\alpha/2}}w(x,t')dt',
\end{equation*}
which is a solution for the following problem
\begin{equation}\label{nonlinear}
\left \{
\begin{array}{l}
i\partial_tv(x,t)+(-\Delta)^{\alpha/2}v(x,t) =w(x,t),\ (x,t)\in\mathbb{R}\times\mathbb{R},
\\[5pt]
v(x,0) =0,\ x\in\mathbb{R}.
\end{array}
\right.
\end{equation}
The next result establishes the estimates for the Duhamel inhomogeneous solution operator, the prove is in Appendix \ref{apduhamel}. 

\begin{lemma}\label{duhamel} Let $s\in \mathbb{R}$ and $\psi$ is a cutoff function as defined above, then: 
	\begin{itemize}
		\item[(a)]$($Space traces$)$ If $-\frac{1}{2}<c<0$ , then
		\begin{equation*}
		\|\psi(t)\mathcal{D}w(x,t)\|_{C\big(\mathbb{R}_t;\,H^s(\mathbb{R}_x)\big)}\leq c\|w\|_{X^{s,c}};
		\end{equation*}
		\item[(b)]$($Time traces$)$ If $-\frac{1}{2}<c<0$, then
		 \begin{equation*}
		\|\psi(t)\mathcal{D}w(x,t)\|_{C(\mathbb{R}_x;H^{\frac{2s-1+\alpha}{2\alpha}}(\mathbb{R}_t))}\leq 
		c\|w\|_{X^{s,c}},\ \text{if}\ 0\leq  s\leq\frac{1}{2};
		\end{equation*}
		\item[(c)]$($Bourgain estimates$)$ If $-\frac{1}{2}<c\leq0\leq b\leq c+1$, then
		\begin{equation*}
		\|\psi(t)\mathcal{D}w(x,t)\|_{X^{s,b}}\leq \|w\|_{X^{s,c}}.
		\end{equation*}
	\end{itemize}
\end{lemma}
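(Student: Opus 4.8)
The plan is to pass to the space--time Fourier side and reduce each of the three estimates to the already-established linear bounds of Lemma \ref{grupo} together with the Bourgain-multiplier bounds of Lemma \ref{gvt2}. First I would carry out the $t'$-integral in the definition of $\mathcal{D}$: writing the space--time Fourier transform of $w$ as $\widetilde{w}(\xi,\tau)$ and expanding $\widehat{w}(\xi,t')$ in $\tau$, the inner integral $\int_0^t e^{i(t-t')|\xi|^{\alpha}}e^{it'\tau}\,dt'$ is elementary and yields the kernel representation
$$
\mathcal{D}w(x,t)=-\int_{\R}\int_{\R} e^{ix\xi}\,\frac{e^{it\tau}-e^{it|\xi|^{\alpha}}}{\tau-|\xi|^{\alpha}}\,\widetilde{w}(\xi,\tau)\,d\tau\,d\xi,
$$
up to an absolute constant. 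Introducing a smooth cutoff $\chi$ to the modulation region $\{\,|\tau-|\xi|^{\alpha}|\le 1\,\}$, I would split $\mathcal{D}w=\mathcal{D}^{\mathrm{low}}w+\mathcal{D}^{\mathrm{high}}w$ according to $\chi$ and $1-\chi$ and treat the two pieces separately, the key point being that every piece is either a time-localized free evolution (to which Lemma \ref{grupo} applies) or a multiplier of gain two in the modulation (to which Lemma \ref{gvt2} applies).

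On the low-modulation piece I would use the Taylor expansion
$$
\frac{e^{it\tau}-e^{it|\xi|^{\alpha}}}{\tau-|\xi|^{\alpha}}=e^{it|\xi|^{\alpha}}\sum_{k\ge 1}\frac{(it)^{k}}{k!}\,(\tau-|\xi|^{\alpha})^{k-1},
$$
so that $\psi(t)\mathcal{D}^{\mathrm{low}}w$ becomes a sum over $k$ of terms $\tfrac{i^k}{k!}\,\psi(t)t^{k}e^{it(-\Delta)^{\alpha/2}}F_{k}$, where $\widehat{F_k}(\xi)=\int(\tau-|\xi|^{\alpha})^{k-1}\chi\,\widetilde{w}(\xi,\tau)\,d\tau$. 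Since $\chi$ localizes to a bounded modulation set, Cauchy--Schwarz in $\tau$ gives $\|F_k\|_{H^s(\R)}\lesssim\|w\|_{X^{s,c}}$ uniformly in $k$ (the $\tau$-integral over a bounded set converges for any value of $c$), and each summand is then controlled by the corresponding part of Lemma \ref{grupo} after replacing the cutoff $\psi$ by $t^{k}\psi$.

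For the high-modulation piece I would separate the two exponentials. The term carrying $e^{it\tau}$ has space--time Fourier transform $(1-\chi)\widetilde{w}/(\tau-|\xi|^{\alpha})$, and on $\{\,|\tau-|\xi|^{\alpha}|>1\,\}$ division by $\tau-|\xi|^{\alpha}$ gains two powers of the modulation, so the $X^{s,b}$, space-trace and time-trace bounds follow directly from Lemma \ref{gvt2} together with the elementary embedding of $X^{s,b'}$ with $b'>\tfrac12$ into $C(\R_t;H^s)$ and the local smoothing embedding. The term carrying $e^{it|\xi|^{\alpha}}$ is a free evolution $\psi(t)e^{it(-\Delta)^{\alpha/2}}G$ with $\widehat{G}(\xi)=\int(1-\chi)\widetilde{w}(\xi,\tau)/(\tau-|\xi|^{\alpha})\,d\tau$; here Cauchy--Schwarz gives
$$
\|G\|_{H^s(\R)}^{2}\lesssim\int\langle\xi\rangle^{2s}\Big(\int_{|\tau-|\xi|^{\alpha}|>1}\langle\tau-|\xi|^{\alpha}\rangle^{-2-2c}\,d\tau\Big)\Big(\int\langle\tau-|\xi|^{\alpha}\rangle^{2c}|\widetilde{w}(\xi,\tau)|^2\,d\tau\Big)\,d\xi\lesssim\|w\|_{X^{s,c}}^2,
$$
where the inner $\tau$-integral converges precisely because $c>-\tfrac12$. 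Applying parts $(a)$, $(b)$, $(c)$ of Lemma \ref{grupo} to this free evolution produces the three desired bounds, the restriction $0\le s\le\tfrac12$ in $(b)$ being inherited from the range in which the time trace of the non-resonant remainder is controlled.

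The delicate point, and the step I expect to be the main obstacle, is the low-modulation series: I must verify that inserting the polynomial weight $t^{k}$ into the cutoff produces constants in Lemma \ref{grupo} — which depend on a Sobolev norm of the cutoff — that grow no faster than geometrically in $k$, so that $\sum_k\tfrac{1}{k!}(\mathrm{const})^{k}$ converges. Combined with the uniform-in-$k$ bound on $\|F_k\|_{H^s(\R)}$ coming from the compact support of $\chi$, this closes the argument. By contrast, the high-modulation contributions are essentially immediate once the convergence condition $c>-\tfrac12$ is identified.
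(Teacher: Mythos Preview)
Your outline coincides with the paper's proof almost exactly: the paper splits $\psi(t)\mathcal{D}w=w_1+w_2-w_3$, where $w_1$ is your low-modulation Taylor series, $w_3$ is your free-evolution term $\psi(t)e^{it(-\Delta)^{\alpha/2}}G$ handled by the identical Cauchy--Schwarz computation (the $\tau$-integral $\int\langle\tau-|\xi|^{\alpha}\rangle^{-2+2c}\,d\tau$ converging precisely because $c>-\tfrac12$), and $w_2$ is your $e^{it\tau}$ piece. The paper only writes out part (b), referring (a) and (c) to \cite{GVT}.

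The one substantive difference is your treatment of $w_2$ for the time trace. You invoke a local-smoothing embedding $X^{s,b'}\hookrightarrow C\bigl(\R_x;H^{(2s-1+\alpha)/(2\alpha)}(\R_t)\bigr)$ for $b'>\tfrac12$, which is nowhere stated in the paper. The paper instead strips $\psi(t)$ via Lemma \ref{sobolevh0}(iv), changes variables $\eta=|\xi|^{\alpha}$, and bounds the resulting $\tau$-integral directly, reducing matters to the weight inequality
\[
G(\tau)=\int\langle\tau-\eta\rangle^{-2-2c}\langle\eta\rangle^{-s}|\eta|^{(1-\alpha)/\alpha}\,d\eta\lesssim\langle\tau\rangle^{(1-2s-\alpha)/\alpha},
\]
which it proves by a case split on $|\eta|$. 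That computation is where the restriction on $s$ in (b) actually enters; in your sketch the origin of that restriction remains vague, and if you go the embedding route you should check carefully where (and whether) it is really needed. One minor slip: division by $\tau-|\xi|^{\alpha}$ gains \emph{one} power of the modulation, not two --- the conclusion is unaffected since $c+1>\tfrac12$ is what the embeddings require.
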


%%%%%%%%%%%%%%%%%%%%%%%%%%%%%%%%%%%%%%%%%%%%%%%%%%%%%%%%%%%%%%%%%%%%%%%%%%%%%%%%%%%
%%%%%%%%%%%%%%%%%%%%%%%%%%%%%%%%%%%%%%%%%%%%%%%%%%%%%%%%%%%%%%%%%%%%%%%%%%%%%%%%%%%
\section{Trilinear estimate}
In this section, we prove the   crucial trilinear estimate on the Bourgain space for $b<\frac12$. 

 To do this we need of the following more technical lemmas.
\begin{lemma}\label{Lemma4}
	For $\frac{2-\alpha}{4}<s$ and $a<\min\{\frac{\alpha-1}{2},\frac{4s+\alpha-2}{2}\}$ there exist $\epsilon>0$ such that for 
	$\frac{1}{2}-\epsilon<b<\frac{1}{2}$, we have
	\begin{equation}
	\sup_{\xi}\int_{A}\; \dfrac{\langle\xi\rangle^{2s+2a}\langle\xi_1\rangle^{-2s}\langle\xi_2\rangle^{-2s}\langle\xi-\xi_1+\xi_2\rangle^{-2s}}{\langle|\xi|^\alpha-|\xi_1|^\alpha+|\xi_2|^\alpha-|\xi-\xi_1+\xi_2|^\alpha\rangle^{1-6\epsilon}}d\xi_1 d\xi_2,\label{priceqlemma4}
	\end{equation}
	is bounded, where $A:=\{|\xi_1-\xi|<1\, or\, |\xi_1-\xi_2|<1\}$.
\end{lemma}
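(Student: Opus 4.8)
The plan is to use that $A$ is exactly the region where the resonance relation degenerates, and to recover boundedness from the weights together with the sharp hypotheses on $s$ and $a$. First I would set $\xi_3:=\xi-\xi_1+\xi_2$ and write the denominator phase as
\[
\Omega:=|\xi|^\alpha-|\xi_1|^\alpha+|\xi_2|^\alpha-|\xi_3|^\alpha .
\]
With $k=\xi_1$, $m=\xi-\xi_1$ and $n=\xi_2-\xi_1$ one has $\xi=k+m$, $\xi_2=k+n$, $\xi_3=k+m+n$, so $\Omega$ is precisely the quantity bounded below in Lemma \ref{Lemma3}, namely $|\Omega|\gtrsim|m||n|\,(|m|+|n|+|k|)^{-(2-\alpha)}$. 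In these variables $A=\{|m|<1\}\cup\{|n|<1\}$ is exactly the set where the numerator $|m||n|$ of that bound is small, i.e. where Lemma \ref{Lemma3} gives no gain. The map $\xi_1\leftrightarrow\xi_3$ is measure preserving, leaves $\Omega$ and the weight $\langle\xi_1\rangle^{-2s}\langle\xi_3\rangle^{-2s}$ invariant, and swaps $\{|m|<1\}$ and $\{|n|<1\}$; hence it suffices to bound the integral over $A_1:=\{|\xi_1-\xi|<1\}$.

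On $A_1$ we have $|m|<1$, so $\langle\xi_1\rangle\sim\langle\xi\rangle$, and since $\xi_3=\xi_2+m$ also $\langle\xi_3\rangle\sim\langle\xi_2\rangle$. The integrand collapses, up to constants, to $\langle\xi\rangle^{2a}\langle\xi_2\rangle^{-4s}\langle\Omega\rangle^{-(1-6\epsilon)}$, and after the unit-Jacobian substitution $(\xi_1,\xi_2)\mapsto(m,\xi_2)$ the task becomes
\[
\langle\xi\rangle^{2a}\int_{|m|<1}\int_{\R}\frac{\langle\xi_2\rangle^{-4s}}{\langle\Omega\rangle^{1-6\epsilon}}\,d\xi_2\,dm\lesssim 1
\]
uniformly in $\xi$, with $\Omega=|\xi|^\alpha-|\xi-m|^\alpha+|\xi_2|^\alpha-|\xi_2+m|^\alpha$.

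The crux is the inner integral, and the difficulty is that on $A_1$ the phase has small $\xi_2$-derivative, so one cannot change variables $\xi_2\mapsto\Omega$; the efficient move is to integrate in $m$ first. Writing $F(x)=|x|^\alpha$, one has $\partial_m\Omega=F'(\xi-m)-F'(\xi_2+m)$, which on the bulk range $|\xi_2|\gg\langle\xi\rangle$ has size $\sim|\xi_2|^{\alpha-1}$; there $m\mapsto\Omega$ is non-degenerate and Lemma \ref{Lema0} yields $\int_0^1\langle\Omega\rangle^{-(1-6\epsilon)}\,dm\lesssim|\xi_2|^{-(\alpha-1)(1-6\epsilon)}$. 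The surviving integral $\int_{|\xi_2|\gg\langle\xi\rangle}\langle\xi_2\rangle^{-4s-(\alpha-1)(1-6\epsilon)}\,d\xi_2$ converges exactly when $4s+(\alpha-1)(1-6\epsilon)>1$ — for small $\epsilon$ this is the hypothesis $s>\tfrac{2-\alpha}{4}$ — and is bounded by $\langle\xi\rangle^{1-4s-(\alpha-1)(1-6\epsilon)}$; multiplying by $\langle\xi\rangle^{2a}$ and requiring uniform boundedness forces precisely $a<\tfrac{4s+\alpha-2}{2}$.

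On the complementary range $|\xi_2|\lesssim\langle\xi\rangle$ the derivative $\partial_m\Omega$ can degenerate (when $\xi_2$ is close to $\xi$), so there I would instead keep the full Lemma \ref{Lemma3} lower bound, integrate in $m$, and reduce the remaining one-dimensional integral — after the substitution $\eta=|\xi_2|^\alpha$ that linearizes the phase — to the form handled by Lemma \ref{Lemadesigual} (together with Lemmas \ref{Lemma1} and \ref{Lemma2}). As in the boundary estimate of Lemma \ref{edbf}(c), the admissibility condition there translates into the last constraint $a<\tfrac{\alpha-1}{2}$. Throughout, $\epsilon>0$ is taken small enough (equivalently $b$ near $\tfrac12$ from below) that every exponent stays strictly inside the ranges demanded by Lemmas \ref{Lemma1}--\ref{Lemma3} and \ref{Lemadesigual} and the $6\epsilon$ losses remain harmless; combining the two ranges and recalling the reduction of $A_2$ to $A_1$ completes the uniform bound. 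The main obstacle is this balancing in the inner integral, and the estimate is sharp in that $s=\tfrac{2-\alpha}{4}$ is exactly the convergence threshold of the high-frequency tail.
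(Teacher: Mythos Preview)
Your symmetry reduction of $A$ to $A_1=\{|m|<1\}$ via the measure-preserving swap $\xi_1\leftrightarrow\xi_3$ is correct and cleaner than the paper's proof, which treats all of $A$ at once using only that $\langle\xi_1\rangle\langle\xi_3\rangle\sim\langle\xi\rangle\langle\xi_2\rangle$ there. Your high-frequency piece $|\xi_2|\gg\langle\xi\rangle$ is also correct: the direct change of variables $m\mapsto\Omega$ (with $|\partial_m\Omega|\sim|\xi_2|^{\alpha-1}$) followed by Lemma~\ref{Lema0} is a genuinely different and more transparent route than the paper's, and it yields the constraint $a<\tfrac{4s+\alpha-2}{2}$ while making clear why $s=\tfrac{2-\alpha}{4}$ is exactly the convergence threshold of the tail.

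The gap is in the range $|\xi_2|\lesssim\langle\xi\rangle$. Once you replace $\langle\Omega\rangle$ by the lower bound from Lemma~\ref{Lemma3}, the denominator becomes the purely algebraic quantity $\bigl\langle |m|\,|\xi_2-\xi+m|\,\langle\xi\rangle^{-(2-\alpha)}\bigr\rangle^{1-6\epsilon}$; after integrating in $m$ there is no expression of the form $\tau-|\xi_2|^\alpha$ left, so neither the substitution $\eta=|\xi_2|^\alpha$ nor Lemma~\ref{Lemadesigual} applies --- that lemma is tailored to the boundary-forcing kernel in Lemma~\ref{edbf}(c), and the analogy you draw with the condition $s<\tfrac{\alpha-1}{2}$ there is only formal. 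Nor can you repeat the $m\mapsto\Omega$ trick, since $\partial_m\Omega$ degenerates precisely near $\xi_2=\xi$. What the paper actually does in this range (its Cases~I.1 and I.2) is keep $\xi_2$ fixed, substitute $x=(\xi_1-\xi)(\xi_1-\xi_2)$ in the $\xi_1$-integral to pick up the Jacobian $|4x+(\xi-\xi_2)^2|^{-1/2}$, then apply Lemma~\ref{Lemma2} to the resulting $\langle x\rangle^{-\rho}|x-a|^{-1/2}$ integral and finish with Lemma~\ref{Lemma1} in $\xi_2$; this quadratic substitution is what produces the second constraint $a<\tfrac{\alpha-1}{2}$ (for $s\ge\tfrac14$). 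Your low-frequency sketch would need to be replaced by this device, or by a further explicit decomposition in $|\xi_2-\xi|$ that handles the degeneration of $\partial_m\Omega$ directly.
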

\begin{proof}
	See Appendix \ref{proof1} for a complete proof.
\end{proof}
%%%%%%%%%%%%%%%%%%%%%%%%%%%%%%%%%%%%%%%%%%%%%%%%%%%%%%%%%%%%%%%%%%%%%%%%%%%%%%%%%%%%%%%
\begin{lemma}\label{Lemma5}
	For $\frac{2-\alpha}{4}<s<\frac{1}{2}$ and $a<\min\{\frac{\alpha-1}{2},\frac{4s+\alpha-2}{2}\}$ there exist $\epsilon>0$ such that for 
	$\frac{1}{2}-\epsilon<b<\frac{1}{2}$, we have
	\begin{equation}
	\sup_{\xi}\int_{B}\; \dfrac{\langle\xi\rangle^{2s+2a}\langle\xi_1\rangle^{-2s}\langle\xi_2\rangle^{-2s}\langle\xi-\xi_1+\xi_2\rangle^{-2s}}{\langle|\xi|^\alpha-|\xi_1|^\alpha+|\xi_2|^\alpha-|\xi-\xi_1+\xi_2|^\alpha\rangle^{1-6\epsilon}}d\xi_1 d\xi_2,\label{priceqlemma5}
	\end{equation}
	is bounded, where $B:=\{|\xi_1-\xi|\geq1\, and\, |\xi_1-\xi_2|\geq1\}$.
\end{lemma}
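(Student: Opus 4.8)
The plan is to extract, in region $B$, a robust lower bound on the resonance function from Lemma \ref{Lemma3}, and then reduce the two-dimensional integral to successive one-dimensional integrals handled by the elementary inequalities. For fixed $\xi$ I would introduce the change of variables $m=\xi_1-\xi$ and $n=\xi_1-\xi_2$ (Jacobian one), under which $\xi_1=\xi+m$, $\xi_2=\xi+m-n$ and $\xi_3:=\xi-\xi_1+\xi_2=\xi-n$, so that on $B$ we have $|m|=|\xi_1-\xi|\ge1$ and $|n|=|\xi_1-\xi_2|\ge1$. Writing $R=|\xi|^\alpha-|\xi_1|^\alpha+|\xi_2|^\alpha-|\xi_3|^\alpha$ and setting $k:=\xi_3$, Lemma \ref{Lemma3} applies with this $(k,m,n)$ and gives
\[
|R|\gtrsim \frac{|m||n|}{(|m|+|n|+|k|)^{2-\alpha}}.
\]
Combining this with the trivial bound $\langle R\rangle\ge1$ yields $\langle R\rangle\gtrsim \big\langle |m||n|\,(|m|+|n|+|k|)^{-(2-\alpha)}\big\rangle$. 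Because $|m||n|\ge1$ throughout $B$, this lower bound is genuinely effective; this is precisely what degenerates in region $A$ (where $|m|<1$ or $|n|<1$) and forces the separate treatment of Lemma \ref{Lemma4}.

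After inserting this bound, matters reduce to showing that
\[
\langle\xi\rangle^{2s+2a}\int_{\substack{|m|\ge1\\ |n|\ge1}}\frac{\langle\xi+m\rangle^{-2s}\,\langle\xi+m-n\rangle^{-2s}\,\langle\xi-n\rangle^{-2s}}{\big\langle |m||n|\,(|m|+|n|+|\xi-n|)^{-(2-\alpha)}\big\rangle^{1-6\epsilon}}\,dm\,dn
\]
is bounded uniformly in $\xi$. I would organize the estimate by comparing the size of the output frequency $|\xi_3|=|\xi-n|$ with $|m|+|n|$. In the benign regime $|\xi-n|\lesssim|m|+|n|$ the denominator is at least $\big\langle |m||n|\,(|m|+|n|)^{-(2-\alpha)}\big\rangle^{1-6\epsilon}$, which is strong; here I would carry out the two integrations successively using Lemmas \ref{Lemma1} and \ref{Lemma2} to absorb the Sobolev weights (and the half-power arising in the worst configuration) and Lemma \ref{Lema0} to close, the gain from $s>\frac{2-\alpha}{4}$ being exactly enough.

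The main obstacle is the complementary regime where the output frequency dominates, $|\xi-n|\gg|m|+|n|$. Since $\xi=(\xi-n)+n$, this forces $|\xi|\sim|\xi_3|$ to be large, so that $\langle\xi\rangle^{2s+2a}\langle\xi-n\rangle^{-2s}$ collapses to $\langle\xi\rangle^{2a}$, while the resonance bound degrades to $\langle R\rangle\gtrsim\big\langle |m||n|\,|\xi|^{-(2-\alpha)}\big\rangle$, whose reciprocal carries a factor $|\xi|^{(2-\alpha)(1-6\epsilon)}$. The whole estimate then hinges on balancing $\langle\xi\rangle^{2a+(2-\alpha)(1-6\epsilon)}$ against the decay produced by integrating $\langle\xi+m\rangle^{-2s}\langle\xi+m-n\rangle^{-2s}(|m||n|)^{-(1-6\epsilon)}$ over $|m|,|n|\ge1$; this is delicate because $2s<1$ prevents the Sobolev weights alone from being integrable, so one must integrate carefully with Lemmas \ref{Lemma1}, \ref{Lemma2} and \ref{Lema0} rather than by crude pointwise bounds. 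Tracking the surviving power of $\langle\xi\rangle$ through these integrations, it stays non-positive exactly under the hypotheses $a<\frac{\alpha-1}{2}$ and $a<\frac{4s+\alpha-2}{2}$, which is where both upper bounds on $a$ (together with $s<\frac12$, used to invoke the elementary integral lemmas within their stated ranges) enter. Finally, choosing $\epsilon>0$ small enough that the $6\epsilon$ loss does not overturn these strict inequalities produces the uniform bound and completes the proof.
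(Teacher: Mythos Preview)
Your overall strategy matches the paper's: invoke Lemma~\ref{Lemma3} to bound the resonance function from below, then reduce to elementary integral lemmas. However, your case decomposition and the attribution of the two thresholds on $a$ are off in a way that leaves a genuine gap.

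The paper does \emph{not} split according to $|\xi_3|$ versus $|m|+|n|$. It first separates the sub-region where the bracket is ineffective, $|m||n|\ll(|m|+|n|+|\xi|)^{2-\alpha}$ (this forces $|m||n|\lesssim|\xi|^{2-\alpha}$ and is handled directly), and in the complementary region further compares $|\xi_1-\xi_2|$ with $|\xi|$ into three pieces ($\approx$, $\ll$, $\gg$). After using the symmetry $|\xi_1-\xi_2|\ge|\xi_1-\xi|$, your ``regime~2'' ($|\xi_3|\gg|m|+|n|$, equivalently $|\xi|\gg|m|+|n|$) corresponds precisely to the union of the paper's Case~I and Case~II/Region~2; your analysis there is essentially correct and produces only the constraint $a<\tfrac{4s+\alpha-2}{2}$.

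The problem is your ``benign'' regime~1. It is \emph{not} benign: it contains the paper's Region~1 ($|\xi_1-\xi_2|\approx|\xi|$) and Region~3 ($|\xi_1-\xi_2|\gg|\xi|$). In Region~1 with $\tfrac14<s<\tfrac12$ the paper must apply Lemma~\ref{Lemma1} twice (first in $\xi_2$, then in $\xi_1$) and obtains the exponent $\langle\xi\rangle^{2a-(\alpha-1)(1-6\epsilon)}$, which is bounded exactly when $a<\tfrac{\alpha-1}{2}$. So the second threshold on $a$ arises in your regime~1, not in regime~2 as your write-up suggests, and the phrase ``the gain from $s>\tfrac{2-\alpha}{4}$ being exactly enough'' understates what is needed there. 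Your sketch of regime~1 (``carry out the two integrations successively'') gives no indication of how to isolate this subcase or why the different constraint appears; without that analysis the argument does not close for $s>\tfrac14$. The paper's finer decomposition by $|\xi_1-\xi_2|$ relative to $|\xi|$, together with the $s\le\tfrac14$ versus $s>\tfrac14$ split in Region~1, is precisely what your proposal is missing.
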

\begin{proof}
	See in Appendix \ref{proof2}.
\end{proof}

Now we obtain the triliner estimate.
\begin{proposition}\label{trilinear}
	For $\frac{2-\alpha}{4}<s$ and $a<\min\{\frac{\alpha-1}{2},\frac{4s+\alpha-2}{2}\}$ there exist $\epsilon>0$ such that for 
	$\frac{1}{2}-\epsilon<b<\frac{1}{2}$, we have
	$$
	\Vert |u|^2 u  \Vert_{X^{s+a,-b}}\lesssim \Vert u \Vert^3_{X^{s,b}}
	$$
\end{proposition}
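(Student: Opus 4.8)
The plan is to prove the estimate by duality, reducing it to the two frequency-integral bounds already isolated in Lemmas \ref{Lemma4} and \ref{Lemma5}. First I would normalize: writing $\hat u(\xi,\tau)=\langle\xi\rangle^{-s}\langle\tau-|\xi|^\alpha\rangle^{-b}h(\xi,\tau)$ so that $\|h\|_{L^2_{\xi,\tau}}=\|u\|_{X^{s,b}}$, and using that the $X^{s,b}$ norm is invariant under complex conjugation (because $|\xi|^\alpha$ is even), the claim is equivalent to an $L^2$ bound for the trilinear operator with kernel
\[
M=\frac{\langle\xi\rangle^{s+a}}{\langle\tau-|\xi|^\alpha\rangle^{b}}\cdot\frac{\langle\xi_1\rangle^{-s}\langle\xi_2\rangle^{-s}\langle\xi_3\rangle^{-s}}{\langle\tau_1-|\xi_1|^\alpha\rangle^{b}\langle\tau_2-|\xi_2|^\alpha\rangle^{b}\langle\tau_3-|\xi_3|^\alpha\rangle^{b}},
\]
where $\xi_3=\xi-\xi_1+\xi_2$ and $\tau_3=\tau-\tau_1+\tau_2$ are determined by the convolution constraints arising from $|u|^2u=u\bar u u$ (the conjugated middle factor contributes the weight $\langle\tau_2-|\xi_2|^\alpha\rangle$ since $|\xi_2|^\alpha$ is even). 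By Cauchy--Schwarz in the free variables $(\xi_1,\tau_1,\xi_2,\tau_2)$, it then suffices to show that $\sup_{\xi,\tau}\int M^2\,d\xi_1\,d\tau_1\,d\xi_2\,d\tau_2<\infty$.

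The central algebraic input is the resonance identity: with the above conventions the $\tau$-parts cancel and
\[
(\tau-|\xi|^\alpha)-(\tau_1-|\xi_1|^\alpha)+(\tau_2-|\xi_2|^\alpha)-(\tau_3-|\xi_3|^\alpha)=-R,\qquad R:=|\xi|^\alpha-|\xi_1|^\alpha+|\xi_2|^\alpha-|\xi-\xi_1+\xi_2|^\alpha,
\]
so $R$ is exactly the quantity appearing in Lemmas \ref{Lemma4} and \ref{Lemma5}. Setting $b=\tfrac12-\epsilon$ and integrating the two modulation variables $\tau_1,\tau_2$ by two successive applications of Lemma \ref{Lemma1} (legitimate because $4b>1$ for small $\epsilon$, and with $\Phi_\beta(a)\sim\langle a\rangle^{1-\beta}$ since the relevant exponents are $<1$) produces, after bookkeeping of the two $\epsilon$-losses, a bound $\int d\tau_1\,d\tau_2\lesssim\langle(\tau-|\xi|^\alpha)+R\rangle^{-(1-6\epsilon)}$. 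It remains to absorb the front weight $\langle\tau-|\xi|^\alpha\rangle^{-2b}$: writing $w=\tau-|\xi|^\alpha$, a short case analysis (if $|w|\le|R|/2$ then $\langle w+R\rangle\gtrsim\langle R\rangle$; if $|w|>|R|/2$ then $\langle w\rangle^{-2b}\lesssim\langle R\rangle^{-2b}\le\langle R\rangle^{-(1-6\epsilon)}$ since $2b\ge 1-6\epsilon$) gives $\langle w\rangle^{-2b}\langle w+R\rangle^{-(1-6\epsilon)}\lesssim\langle R\rangle^{-(1-6\epsilon)}$ uniformly in $w$. This removes the $\tau$-dependence and collapses the supremum to
\[
\sup_{\xi}\int\frac{\langle\xi\rangle^{2s+2a}\langle\xi_1\rangle^{-2s}\langle\xi_2\rangle^{-2s}\langle\xi-\xi_1+\xi_2\rangle^{-2s}}{\langle R\rangle^{1-6\epsilon}}\,d\xi_1\,d\xi_2.
\]

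To finish, I would split the $(\xi_1,\xi_2)$-plane into the region $A=\{|\xi_1-\xi|<1\text{ or }|\xi_1-\xi_2|<1\}$ and its complement $B$, which together exhaust $\R^2$, and bound the two pieces by Lemma \ref{Lemma4} and Lemma \ref{Lemma5} respectively; summing yields finiteness of the supremum and hence the proposition. The hard part is exactly this last frequency integral together with the borderline modulation step. Because Lemma \ref{edbf}(c) forces us to work with $b<\tfrac12$, the time integrals no longer converge with room to spare, so the $\epsilon$-bookkeeping that produces the exponent $1-6\epsilon$ must be done with care. Moreover, unlike the integer-order case, the fractional symbol $|\xi|^\alpha$ admits no exact factorization of $R$; the lower bound $\langle R\rangle\gtrsim|m||n|/(|m|+|n|+|k|)^{2-\alpha}$ from Lemma \ref{Lemma3} is what ultimately drives both frequency estimates, and reconciling it with the Sobolev weights $\langle\xi_j\rangle^{-2s}$ across the regions $A$ and $B$ is the genuine obstacle handled in Lemmas \ref{Lemma4} and \ref{Lemma5}.
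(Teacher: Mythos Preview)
Your proposal is correct and follows essentially the same route as the paper: normalize, apply Cauchy--Schwarz in the convolution variables to reduce to $\sup_{\xi,\tau}\int M^2$, integrate out $\tau_1,\tau_2$ via Lemma \ref{Lemma1} to get the exponent $6b-2=1-6\epsilon$, collapse the remaining $\tau$-dependence, and then split the $(\xi_1,\xi_2)$-integral into the regions $A$ and $B$ handled by Lemmas \ref{Lemma4} and \ref{Lemma5}. The only cosmetic difference is that the paper removes the $\tau$-dependence in one line via $\langle\tau-a\rangle\langle\tau-b\rangle\gtrsim\langle a-b\rangle$ (raised to the smaller exponent $6b-2\le 2b$), whereas you do an equivalent two-case analysis on $|w|$ versus $|R|$.
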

\begin{proof}
	By writing the Fourier transform of $|u|^2u = u\overline{u}u$ as a convolution, we obtain
	$$
	\widehat{|u|^2u}(\xi,\tau)=\int_{\xi_1,\xi_2}\int_{\tau_1,\tau_2}\hat{u}(\xi_1,\tau_1)\overline{\hat{u}(\xi_2,\tau_2)}\hat{u}(\xi-\xi_1+\xi_2,\tau-\tau_1+\tau_2).
	$$
	Hence
	$$
	\Vert |u|^2u \Vert^2_{X^{s+a,-b}}=\left\Vert  \int_{\xi_1,\xi_2}\int_{\tau_1,\tau_2}\dfrac{\langle\xi\rangle^{s+a}\hat{u}(\xi_1,\tau_1)\overline{\hat{u}(\xi_2,\tau_2)}\hat{u}(\xi-\xi_1+\xi_2,\tau-\tau_1+\tau_2)}{\langle\tau-|\xi|^{\alpha}\rangle^b}  \right\Vert^2_{L^2_\xi,L^2_\tau}.
	$$
	We define
	$$
	f(\xi,\tau)=|\hat{u}(\xi,\tau)|\langle \xi \rangle^s\langle\tau-|\xi|^\alpha\rangle^b
	$$
	and 
	$$
	M(\xi,\xi_1,\xi_2,\tau,\tau_1,\tau_2):=\dfrac{\langle\xi\rangle^{s+a}\langle\xi_1\rangle^{-s}\langle\xi_2\rangle^{-s}\langle\xi-\xi_1+\xi_2\rangle^{-s}}
	{\langle\tau-|\xi|^\alpha\rangle^{b}\langle\tau_1-|\xi_1|^\alpha\rangle^{b}\langle\tau_2-|\xi_2|^\alpha\rangle^{b}\langle\tau-\tau_1+\tau_2-|\xi-\xi_1+\xi_2|^\alpha\rangle^{b}}.
	$$
	On the other hand, observe that
	$$
	\begin{aligned}
	M(\xi,\xi_1,\xi_2,&\tau,\tau_1,\tau_2)f(\xi_1,\tau_1)f(\xi_2,\tau_2)f(\xi-\xi_1+\xi_2,\tau-\tau_1+\tau_2)=
	\\[7pt]
	&\dfrac{\langle\xi\rangle^{s+a}|\hat{u}(\xi_1,\tau_1)||\hat{u}(\xi_2,\tau_2)||\hat{u}(\xi-\xi_1+\xi_2,\tau-\tau_1+\tau_2)|}{\langle\tau-|\xi|^{\alpha }\rangle^b}
	\end{aligned}
	$$
	Then it is suffices to show that
	$$
	\left\Vert \int_{\xi_1,\xi_2}\int_{\tau_1,\tau_2}M(\xi,\xi_1,\xi_2,\tau,\tau_1,\tau_2)f(\xi_1,\tau_1)f(\xi_2,\tau_2)f(\xi-\xi_1+\xi_2,\tau-\tau_1+\tau_2)\right\Vert^2_{L^2_\xi,L^2_\tau}\lesssim \Vert u \Vert^6_{X^{s,b}}
	$$
	By applying Cauchy-Schwartz in the $\xi_1,\xi_2, \tau_1, \tau_2$ integral and then using Holder's inequality,we bound the norm above by
	$$
	\sup_{\xi,\tau}\left( \int_{\xi_1,\xi_2}\int_{\tau_1,\tau_2} M^2\right)\cdot \Vert f^2\ast f^2\ast f^2\Vert_{L^1_\xi,L^1_\tau}
	$$
	Using Young's inequality, the norm $\Vert f^2\ast f^2\ast f^2\Vert_{L^1_\xi,L^1_\tau}$ can be estimated by $\Vert f \Vert^6_{L^2_\xi,L^2_\tau}$ which is exactly 
	$\Vert u\Vert^6_{X^{s,b}}$. Therefore it is sufficient to show that the supremum above is finite. Using Lemma \ref{Lemma1} in the $ \tau_1, \tau_2$
	integrals, the supremum is bounded by
	$$
	\sup_{\xi,\tau}\int \dfrac{\langle\xi\rangle^{2s+2a}\langle\xi_1\rangle^{-2s}\langle\xi_2\rangle^{-2s}\langle\xi-\xi_1+\xi_2\rangle^{-2s}}{\langle\tau-|\xi|^\alpha\rangle^{2b}\langle\tau-|\xi_1|^\alpha+|\xi_2|^\alpha-|\xi-\xi_1+\xi_2|^\alpha\rangle^{6b-2}}d\xi_1 d\xi_2
	$$
	Using the relation $\langle\tau-a\rangle\langle\tau-b\rangle\gtrsim\langle a-b\rangle$, and due to $1/2-\epsilon<b<1/2$, the above reduces to
	$$
	\sup_{\xi}\int \dfrac{\langle\xi\rangle^{2s+2a}\langle\xi_1\rangle^{-2s}\langle\xi_2\rangle^{-2s}\langle\xi-\xi_1+\xi_2\rangle^{-2s}}{\langle|\xi|^\alpha-|\xi_1|^\alpha+|\xi_2|^\alpha-|\xi-\xi_1+\xi_2|^\alpha\rangle^{1-6\epsilon}}d\xi_1 d\xi_2
	$$
	We break the integral into two pieces.
	$$
	\big\{ |\xi_1-\xi|\geq 1\,\cap \,|\xi_1-\xi_2|\geq 1 \big\}\mbox{ and } \big\{|\xi_1-\xi|<1\, \cup\, |\xi_1-\xi_2|<1 \big\}.
	$$
	Then using  Lemma \ref{Lemma4} and Lemma \ref{Lemma5} we conclude the proof.
\end{proof}

%%%%%%%%%%%%%%%%%%%%%%%%%%%%%%%%%%%%%%%%%%%%%%%%%%%%%%%%%%%%%%%%%%%%%%%%%%%%%%%%%%%%%%

%%%%%%%%%%%%%%%%%%%%%%%%%%%%%%%%%%%%%%%%%%%%%%%%%%%%%%%%%%%%%%%%%%%%%%%%%%%%%%%%%%%%%%%

\section{Proof of Theorem \ref{theorem1}}

\medskip
Fix $\alpha\in (\frac43,2)$ and $s\in (\frac{2-\alpha}{4}, \frac{\alpha-1}{2})$.
We pick an extension $\tilde{u}_0\in H^s(\mathbb{R})$ of $u_0$ such that $$\|\tilde{u}_0\|_{H^s(\mathbb{R})}\leq 2\|u_0\|_{H^s(\mathbb{R}^+)}.$$ Let $b=b(s)<\frac{1}{2}$   such that the estimates given in Proposition \ref{trilinear} are valid. 

\medskip
Let $Z^{s,b}$ the Banach space given by
$$
Z^{s,b}=C(\R_t,H^s(\R_x))\cap C(\R_x,H^{\frac{2s-1+\alpha}{2\alpha}}(\R_t))\cap X^{s,b}
$$
under the norm
\[\norm{v}_{Z^{s,b}} = \sup_{t \in \R} \norm{v(t,\cdot)}_{H^s} + \sup_{x \in \R} \norm{v(\cdot,x)}_{H^{\frac{2s-1+\alpha}{2\alpha}}} + \norm{v}_{X^{s,b} },\]
Let
\begin{equation}\label{operator}
\Lambda(u)(t)=\psi(t) e^{i t(-\Delta)^{\alpha/2}} \tilde{u}_{0}+\psi(t) \mathcal{D}\left(\psi_T|u|^{2}u\right)(t)+\psi(t) \mathcal{L} h(t)
\end{equation}

\medskip
where $$h(t)=\left.\left( \psi(t) f(t)-\psi(t) e^{i t(-\Delta)^{\alpha/2}}\left.\tilde{u}_{0}\right|_{x=0}-\psi(t) \mathcal{D}\left(\psi_T|u|^{2}u\right)\left.(t)\right|_{x=0}\right)\right|_{(0,+\infty)}$$

Our goal is to show that $\Lambda$ defines a contraction map on any ball of $Z^{s,b}$.

\medskip
Using Lemma \ref{grupo} we obtain
$$
\left\|\psi(t) e^{i t(-\Delta)^{\alpha/2} } \tilde{u}_{0}\right\|_{Z^{s,b}} \leq c\left\|\tilde{u}_{0}\right\|_{H^{s}(\mathbb{R})} \leq c\left\|u_{0}\right\|_{H^{s}\left(\mathbb{R}^{+}\right)}
$$

Note that the estimates in Lemma \ref{boundaryestimate} are valid for $-\frac12< s<\frac{\alpha-1}{2}$, in particular as $\frac43<\alpha<2$, these estimates are true for $\frac{2-\alpha}{4}< s<\frac{\alpha-1}{2}$ and imply
$$
\left\|\psi(t) \mathcal{L} h(t)\right\|_{Z^{s,b}} \leq c\|h\|_{H_{0}^{\frac{2s-1+\alpha}{2\alpha}}\left(\mathbb{R}^{+}\right)}.
$$

Moreover Lemmas \ref{sobolevh0}, \ref{grupo}, \ref{duhamel} and  \ref{gvt2} and Proposition \ref{trilinear} imply
$$
\begin{aligned}\|&h\|_{H^{\frac{2s-1+\alpha}{2\alpha}}\left(\mathbb{R}^{+}\right)}\\
 & \leq\left\|\chi_{(0,+\infty)}\left(\psi(t) f(t)-\psi(t) e^{i t(-\Delta)^{\alpha/2}}\tilde{u}_{0}-\psi(t) \mathcal{D}\left(\psi_T|u|^{2}u\right)(t)\right) \bigg|_{x=0}\right\|_{H^{\frac{2s-1+\alpha}{2\alpha}}(\mathbb{R})} \\[5pt]
%\\[5pt]
% & \leq c\left(\|f\|_{H^{\frac{2s-1+\alpha}{2\alpha}}(\mathbb{R}^+)}+\left\|\tilde{u}_{0}\right\|_{H^{s}(\mathbb{R})}+T^{\epsilon}\left\||u|^{2}u\right\|_{X^{s,-b}}\right)
&\leq c\left(\|f\|_{H^{\frac{2s-1+\alpha}{2\alpha}}(\mathbb{R}^+)}+\left\|\tilde{u}_{0}\right\|_{H^{s}(\mathbb{R})}+T^{\epsilon}\|u\|_{X^{s, b}}^{3}\right), 
  \end{aligned}
$$
for $\epsilon$ adequately small.

Consequently, we obtain
$$
\left\|\psi(t) \mathcal{L} h(t)\right\|_{Z^{s,b}} \leq c\left(\|f\|_{H^{\frac{2s-1+\alpha}{2\alpha}}(\mathbb{R}^+)}+\left\|u_{0}\right\|_{H^{s}(\mathbb{R}^+)}+T^{\epsilon}\|u\|_{Z^{s, b}}^{3}\right).
$$
On the other hand, from Lemma \ref{duhamel} and Proposition \ref{trilinear} we arrive
$$
\left\|\psi(t) \mathcal{D}\left(\psi_T|u|^{2}u\right)(t)\right\|_{Z^{s,b}} \leq cT^{\epsilon }\|u\|_{X^{s, b}}^{3}\leq cT^{\epsilon}\|u\|_{Z^{s, b}}^{3}.
$$
Thus, we get
$$
\left\|\Lambda u\right\|_{Z^{s,b}} \leq c\left(\|f\|_{H^{\frac{2s-1+\alpha}{2\alpha}}(\mathbb{R}^+)}+\left\|u_{0}\right\|_{H^{s}(\mathbb{R}^+)}+T^{\epsilon}\|u\|_{Z^{s, b}}^{3}\right).
$$
Similarly,
$$
\|\Lambda u_1 - \Lambda u_2\|_{Z^{s,b}}\leq  C_2T^{\epsilon}(\norm{u_1}_{Z^{s,b}}^2+\norm{u_2}_{Z^{s,b}}^2)\norm{u_1-u_2}_{Z^{s,b}},
$$
for $u_1(0,x) = u_2(0,x)$.

\medskip

Consider in $Z_{s,b}$, the ball defined by 
$
B=\{ u\in Z^{s,b}; \|u\|_{Z^{s,b}}\leq M\},
$
where $$M=2 c\left(\|u_0\|_{H^s(\R^+)}+\|f\|_{H^{\frac{2s-1+\alpha}{2\alpha}}(\R^+)}\right).$$ Lastly, choosing $T=T(M)$ sufficiently small, such that 
\[\|\Lambda u\|_{Z^{s,b}}\leq M\ \text{and}\ \|\Lambda u_1 - \Lambda u_2\|_{Z^{s,b}}\leq \frac12\norm{u_1-u_2}_{Z^{s,b}}, \] it follows that $\Lambda$ is a contraction map on $B$, and it completes the proof of Theorem \ref{theorem1}.

Finally, we prove \eqref{regularity}. By \eqref{operator} we have for $t\in [0,T]$ 
$$u(x,t)-L(u_0,f)=-\psi(t) \mathcal{D}\left(\psi_{T}|u|^{2} u\right)(t)-\psi(t) \mathcal{L}( \mathcal{D}\left(\psi_{T}|u|^{2} u\right).(0,t)\bigg|_{x=0}).$$

Therefore, by using Lemmas \ref{duhamel}, \ref{boundaryestimate} and \ref{gvt2} and Proposition \ref{trilinear} we get, for $\epsilon$ adequately small, $c>-\frac12$ and  $a<\min\{\frac{\alpha-1}{2},\frac{4s+\alpha-2}{2}\}$
\begin{equation}
\begin{split}
\left\|u-L_{u_0,f}\right\|_{C(\R_t: H^{s+a}(\R))} &\lesssim T^{\epsilon}\left\||u|^{2} u\right\|_{X^{s+a,c}}\lesssim T^{\epsilon}\|u\|_{X^{s,b}}^3\\
&\lesssim \left(\|u_0\|_{H^s(\R^+)}+\|f\|_{H^{\frac{2s-1+\alpha}{2\alpha}}(\R^+)}\right)^3,
\end{split}
\end{equation}
where in the last inequality we used that the solution is contained in the ball $ B$.

\section{Proof of Corollary \ref{theorem2}}
	The proof is a direct application of the Fourier transform restriction method joint with the trilinear estimate obtained in this work. We only give the principal ideas of the proof.  Fix  $b>\frac12$ and  $\frac{2-\alpha}{4}<s<\frac12$. Define the classical truncated operator
\begin{equation}\Lambda(u)(t)=\psi(t) e^{i t(-\Delta)^{\alpha / 2}} u_{0}+\psi(t) \mathcal{D}\left(\psi_{T}|u|^{2} u\right)(t),
\end{equation}
on the space $X^{s,b}$, such that.

By using the same ideas of the proof of Theorem \ref{theorem1} there is a fix point $u(x,t)$ on the ball $B=\left\{u \in X^{s, b} ;\|u\|_{X^{s, b}} \leq c \|u_0\|_{H^s(\R^n)}\right\}$. Now, by using the immersion $X^{s,b}\subset C(\R:H^s(\R^+))$, Lemma \ref{duhamel} and Proposition \ref{trilinear} we have that for $a<2\alpha-1$
\begin{equation}
\begin{split}
\|u(t)-L_{u_0,f}\|_{C(\R:H^{s+a}(\R))}&= \|\psi(t) \mathcal{D}\left(\psi_{T}|u|^{2} u\right)(t)\|_{C(\R:H^{s+a}(\R))}\\
&\lesssim \|\psi(t) \mathcal{D}\left(\psi_{T}|u|^{2} u\right)(t)\|_{X^{s+a,b-1}}\\
&\lesssim T^{\epsilon}\|u\|_{X^{s, b}}^{3}\lesssim \|u_0\|_{H^s(\R)}^3,
\end{split}
\end{equation}
for $\epsilon$ adequately small.
This proves the result.
%%%%%%%%%%%%%%%%%%%%%%%%%%%%%%%%%%%%%%%%%%%%%%%%%%%%%%%%%%%%%%%%%%%%%%%%%%%%%%
 \section*{Acknowledgements}
%%%%%%%%%%%%%%%%%%%%%%%%
The second author wishes to thank the Federal University of Alagoas during his postdoctoral stay, where the paper was
written, and Capes for the financial support.

\medskip

\appendix

In appendices, we present the proof of the estimate of the classical Duhamel operator and the more technical integral estimates.

\section{Proof of Lemma \ref{duhamel}}\label{apduhamel}

We only prove (b), since the assertions (a) and (c) are standard and can be found in \cite{GVT}.  To do this we take a function $\theta(\tau)\in C^{\infty}(\mathbb{R})$ such that $\theta(\tau)=1$ for $|\tau|<\frac{1}{2}$ and supp $ \theta \subset[-\frac{2}{3},\frac{2}{3}]$, then
\begin{align*}
\mathcal{F}_x&\left( \psi(t)\int_0^te^{i(t-t')(-\Delta)^{\alpha/2}}w(x,t')\right)(\xi)\\
&=\psi(t)\int_{\tau}\frac{e^{it\tau}-e^{it|\xi|^{\alpha}}}{\tau-|\xi|^{\alpha}}\hat{w}(\xi,\tau)d\tau
\\
&\quad= \psi(t)e^{it\xi^2}\int_{\tau}\frac{e^{it(\tau-|\xi|^{\alpha})}-1}{\tau-|\xi|^{\alpha}}\theta(\tau-|\xi|^{\alpha})\hat{w}(\xi,\tau)d\tau\\
& \quad\quad+\psi(t)\int_{\tau}e^{it\tau}\frac{1-\theta(\tau-|\xi|^{\alpha})}{\tau-|\xi|^{\alpha}}\hat{w}(\xi,\tau)d\tau\\
&\quad \quad -\psi(t)e^{it|\xi|^{\alpha}}\int_{\tau}\frac{1-\theta(\tau-|\xi|^{\alpha})}{\tau-|\xi|^{\alpha}}\hat{w}(\xi,\tau)d\tau\\
&:=\mathcal{F}_xw_1+\mathcal{F}_xw_2-\mathcal{F}_xw_3.
\end{align*}
By the power series expansion for the function $e^{it(\tau-|\xi|^{\alpha})}$, we have that $$w_1(x,t)=\displaystyle\sum_{k=1}^{\infty}\frac{\psi_k(t)}{k!}e^{it(-\Delta)^{\alpha/2}}\phi_k(x),$$ 
where $\psi_k(t)=i^kt^k\theta(t)$ and $\hat{\phi}_k(\xi)=\int_{\tau}(\tau-|\xi|^{\alpha})^{k-1}\theta(\tau-|\xi|^{\alpha})\hat{w}(\xi,\tau)d\tau.$ Using Lemma \ref{grupo} (b), it suffices to show that $\|\phi_k\|_{H^s(\mathbb{R})}\leq c\|u\|_{X^{s,c}}$.
Using the definition of $\phi_k$ and the Cauchy-Schwarz inequality we get
\begin{eqnarray*}
	\|\phi_k\|_{H^s(\mathbb{R}_x)}^2&=&c\int_{\xi}\langle \xi \rangle^{2s}\left( \int_{\{\tau:|\tau-|\xi|^{\alpha}|\leq\frac{2}{3}\}}\sum_{k=1}^{\infty}(\tau-|\xi|^{\alpha})^{k-1}\theta(\tau-|\xi|^{\alpha})\hat{u}(\xi,\tau)\right)^2d\xi\\
	&\leq&c\int_{\xi}\langle \xi \rangle^{2s}\int_{\tau}\langle \tau-|\xi|^{\alpha}\rangle^{2c}|\hat{u}(\xi,\tau)|^2d\tau d\xi.
\end{eqnarray*}
This completes the estimate for $w_1$. To treat $w_2$, we use Lemma \ref{sobolevh0} part (iv) (to remove the cuttof function $\psi(t)$) and change variables $\eta=|\xi|^{\alpha}$ to get
\begin{equation}\label{28101}
\|w_2\|_{C\big(\mathbb{R}_x;\,H^{\frac{2s-1+\alpha}{2\alpha}}(\mathbb{R}_t)\big)}\leq c\int_{\tau}\langle \tau\rangle^{\frac{2s-1+\alpha}{\alpha}}\left( \int_{\eta=0}^{+\infty}\langle \tau-\eta\rangle^{-1}\eta^{\frac{1-\alpha}{\alpha}}\big|\hat{w}(\eta^{\frac{1}{\alpha}},\tau)\big|d\eta\right)^2d\tau.
\end{equation}
By Cauchy-Schwarz the right had side of \eqref{28101} is bounded by 
\begin{equation*}
c\int_{\tau}\langle \tau\rangle^{\frac{2s-1+\alpha}{\alpha}}G(\tau)\int_{\eta=0}^{\infty}\langle \eta \rangle^{s}\langle \tau-\eta\rangle^{2c}|\eta|^{\frac{1-\alpha}{\alpha}}\big|\hat{w}(\pm\eta^{\frac{1}{\alpha}},\tau)\big|^2d\eta d\tau,
\end{equation*}
where $$G(\tau)=\int_{\eta}\langle \tau-\eta\rangle^{-2-2c}\langle \eta \rangle^{-s}|\eta|^{\frac{1-\alpha}{\alpha}} d\eta.$$ 

Then, we need to prove $G(\tau)\leq c \langle\tau\rangle^{\frac{1-2s-\alpha}{\alpha}}$. To do this, we estimate we estimate separately on the cases $|\eta|\leq1$, $2|\eta| \leq |\tau|$ and $|\tau|>2\eta$.

For $|\eta|<1$ we have that
$
G(\tau)\leq \int_{|\eta|\leq1}|\eta|^{\frac{1-\alpha}{\alpha}}d\eta,
$
that  is finite for any positive $\alpha$.

Now assume $2\leq2|\eta|\leq |\tau|$. In this case we use $\langle \tau-\eta \rangle \sim \langle \tau \rangle$ to obtain
\begin{equation}\label{aux1}
\begin{split}
G(\tau)&\leq  \langle \tau\rangle^{-2-2c} \int_{|\tau|>\eta} \langle \eta \rangle^{-s+\frac{1-\alpha}{\alpha}}d\tau\leq \langle \tau\rangle^{-2-2c}\langle\tau\rangle^{-s+\frac{1-\alpha}{\alpha}+1},
\end{split}
\end{equation}
when we have used that $s>-\frac{1}{\alpha}$. Then note that the right hand side of \eqref{aux1} is controlled by c $\langle\tau\rangle^{\frac{1-2s-\alpha}{\alpha}}$ if $(2-\alpha )s<\alpha+2c\alpha-1$. 
This completes the estimate for $w_2$.

Finally, we estimate $w_3$. To do this we write $w_3=\psi(t)e^{it(-\Delta)^{\frac{\alpha}{2}}}\phi(x)$, where $\hat{\phi}(\xi)=\int\frac{1-\theta(\tau-|\xi|^{\alpha})}{\tau-|\xi|^{\alpha}}\hat{w}(\xi,\tau)d\tau$. Using Lemma \ref{grupo} (b) and the Cauchy-Schwarz inequality, we obtain
\begin{eqnarray*}
	\|w_3\|_{C(\mathbb{R}_x;H^{\frac{2s-1+\alpha}{\alpha}}(\mathbb{R}_t))}&=&c\|\psi(t)e^{it(-\Delta)^{\frac{\alpha}{2}}}\phi(x)\|_{C(\mathbb{R}_x;H^{\frac{2s-1+\alpha}{2\alpha}}(\mathbb{R}_t))}\leq c\|\phi\|_{H^s(\mathbb{R})}\\
	%&=&c\int_{\xi}\langle \xi \rangle^{2s}\left(\int_{\tau}\frac{1-\theta(\tau-|\xi|^{\alpha})}{\langle\tau-|\xi|^{\alpha}\rangle}|\hat{w}(\xi,\tau)|d\tau\right)^2d\xi\\
	&\leq&c\int_{\xi}\langle \xi \rangle^{2s}\left(\int_{\tau}|\hat{w}(\xi,\tau)|^2\langle\tau-|\xi|^{\alpha}\rangle^{2c}d\tau \int \frac{d\tau}{\langle\tau-|\xi|^{\alpha}\rangle^{2-2c}}\right) d\xi.
\end{eqnarray*}Since $c>-\frac{1}{2}$, we have $\int \frac{1}{\langle\tau-|\xi|^{\alpha}\rangle^{2-2c}}d\tau\leq c$. This completes the proof of Lemma \ref{duhamel}.

\section{Proof of Lemma \ref{Lemma4}}\label{proof1}
First, from  Lemma \ref{Lemma3} we have
$$
\sup_{\xi}\int_{A}\; \dfrac{\langle\xi\rangle^{2s+2a}\langle\xi_1\rangle^{-2s}\langle\xi_2\rangle^{-2s}\langle\xi-\xi_1+\xi_2\rangle^{-2s}}{\left\langle \dfrac{|\xi_1-\xi||\xi_1-\xi_2|}{(|\xi_1-\xi_2|+|\xi_1-\xi|+|\xi|)^{2-\alpha}}\right\rangle^{1-6\epsilon}}d\xi_1 d\xi_2.
$$

On the other hand, since $\left\{|\xi_1-\xi|<1\, or\, |\xi_1-\xi_2|<1\right\}$ we have
$$
\langle \xi_1\rangle\langle\xi-\xi_1+\xi_2\rangle\backsim\langle\xi\rangle\langle\xi_2\rangle
$$
Therefore, we have
\begin{equation}
\sup_{\xi}\int_{A}\;  \dfrac{\langle\xi\rangle^{2a}\langle\xi_2\rangle^{-4s}}{\left\langle \dfrac{|\xi_1-\xi||\xi_1-\xi_2|}{(|\xi_1-\xi_2|+|\xi_1-\xi|+|\xi|)^{2-\alpha}}\right\rangle^{1-6\epsilon}}d\xi_1 d\xi_2.\label{eq1Lemma4}
\end{equation}
We break the integral into two cases $\{|\xi_1|>1\}$ and $\{|\xi_1|\leq1\}$.

\medskip
\textbf{Case I :} $\{|\xi_1|>1\}$, then from \eqref{eq1Lemma4}, we have
\begin{equation}
\sup_{\xi}\int_{A}\;  \dfrac{\langle\xi\rangle^{2a}\langle\xi_2\rangle^{-4s}{(|\xi_1-\xi_2|+|\xi_1-\xi|+|\xi|)^{(2-\alpha)(1-6\epsilon)}}}
{\left\langle|\xi_1-\xi||\xi_1-\xi_2|\right\rangle^{1-6\epsilon}}d\xi_1 d\xi_2.\label{eq2Lemma4}
\end{equation}
Now we divide into two case $\{|\xi_2|<|\xi|\}$ and $\{|\xi_2|\geq|\xi| \}$.

\medskip
\textbf{Case I.1:} $\{|\xi_2|<|\xi|\}$, first observe that
$$
|\xi_1-\xi_2|+|\xi_1-\xi|+|\xi|\lesssim \langle\xi\rangle,
$$
thus from \eqref{eq2Lemma4}, we get
$$
\sup_{\xi}\int_{A}\; \dfrac{\langle\xi\rangle^{2a+(2-\alpha)(1-6\epsilon)}\langle\xi_2\rangle^{-4s}}
{\left\langle|\xi_1-\xi||\xi_1-\xi_2|\right\rangle^{1-6\epsilon}}d\xi_1 d\xi_2.
$$
we use the substitution $x=(\xi_1-\xi)(\xi_1-\xi_2)$ in the $\xi_1$ integral. Therefore, the integral above is bounded by
$$
\sup_{\xi}\int_{A}\;  \dfrac{\langle\xi\rangle^{2a+(2-\alpha)(1-6\epsilon)}\langle\xi_2\rangle^{-4s}}
{\left\langle x\right\rangle^{1-6\epsilon}\sqrt{|4x+(\xi-\xi_2)^2|}}dx\,d\xi_2.
$$
Using Lemma \ref{Lemma2} and then Lemma \ref{Lemma1} again, we bound the supremum of the integral above by
$$
\sup_{\xi}\int_{A}\;  \dfrac{\langle\xi\rangle^{2a+(2-\alpha)(1-6\epsilon)}}
{\langle \xi_2-\xi\rangle^{1-12\epsilon}\langle\xi_2\rangle^{4s}}dx\,d\xi_2\lesssim
\begin{cases}
\langle\xi\rangle^{2a+1-\alpha+6\alpha\epsilon}&s\geq1/4,
\\[5pt]
\langle\xi\rangle^{2a-4s+2-\alpha+6\alpha\epsilon}&s<1/4.
\end{cases}
$$
Which is finite for $a<\min\{\frac{\alpha-1}{2},\frac{4s+\alpha-2}{2}\}$.

\medskip
\textbf{Case I.2:} $\{|\xi_2|\geq|\xi| \}$, first observe that
$$
|\xi_1-\xi_2|+|\xi_1-\xi|+|\xi|\lesssim \langle\xi_2\rangle,
$$
thus from \eqref{eq2Lemma4}, we get
$$
\sup_{\xi}\int_{A}\;  \dfrac{\langle\xi\rangle^{2a}\langle\xi_2\rangle^{-4s+(2-\alpha)(1-6\epsilon)}}
{\left\langle|\xi_1-\xi||\xi_1-\xi_2|\right\rangle^{1-6\epsilon}};d\xi_1 d\xi_2.
$$
Then the estimate follows as the case I.1, consequently the integral above is bounded by
$$
\sup_{\xi}\int_{A}\;  \dfrac{\langle\xi\rangle^{2a}}
{\langle \xi_2-\xi\rangle^{1-12\epsilon}\langle\xi_2\rangle^{4s-(2-\alpha)(1-6\epsilon)}}dx\,d\xi_2\lesssim
\begin{cases}
\langle\xi\rangle^{2a-4s+2-\alpha+6\alpha\epsilon},&s<\frac{3-\alpha}{2},
\\[5pt]
\langle\xi\rangle^{2a-1+12\epsilon},&s\geq\frac{3-\alpha}{2}.
\end{cases}
$$
Finally, the case $\{|\xi_1|\leq1\}$ follows as the case I. Therefore the term \eqref{priceqlemma4} is bounded.

\section{Proof of Lemma \ref{Lemma5}}\label{proof2}

	First from  Lemma \ref{Lemma3} we obtain that \eqref{priceqlemma5} is bounded by
	\begin{equation}
	\sup_{\xi}\int_{B}\; \dfrac{\langle\xi\rangle^{2s+2a}\langle\xi_1\rangle^{-2s}\langle\xi_2\rangle^{-2s}\langle\xi-\xi_1+\xi_2\rangle^{-2s}}{\max\left\lbrace 1,\frac{|\xi_1-\xi||\xi_1-\xi_2|}{(|\xi_1-\xi|+|\xi_1-\xi_2|+|\xi|)^{2-\alpha}}\right\rbrace^{1-6\epsilon}}d\xi_1 d\xi_2.\label{eq1lemma5}
	\end{equation}
	
	Now we divide the integral into two case, the first set contains the term with $ |\xi_1-\xi||\xi_1-\xi_2|\ll(|\xi_1-\xi|+|\xi_1-\xi_2|+|\xi|)^{2-\alpha}$ and the second set contains the remaining terms.
	
	\medskip
	\textbf{Case I:} $|\xi_1-\xi||\xi_1-\xi_2|\ll(|\xi_1-\xi|+|\xi_1-\xi_2|+|\xi|)^{2-\alpha}$. First, observe that  \eqref{eq1lemma5} is bounded by 
	\begin{equation}
	\sup_{\xi}\int_{B\cap D}\; \dfrac{\langle\xi\rangle^{2s+2a}}{\langle\xi_1\rangle^{2s}\langle\xi_2\rangle^{2s}\langle\xi-\xi_1+\xi_2\rangle^{2s}}d\xi_1 d\xi_2,\label{eq2lemma5}
	\end{equation}
	where $D:=\left\lbrace |\xi_1-\xi||\xi_1-\xi_2|\ll(|\xi_1-\xi|+|\xi_1-\xi_2|+|\xi|)^{2-\alpha}\right\rbrace$. 
	
	\medskip
	On the other hand, since $1<\alpha<2$, $|\xi_1-\xi|\geq1$ and $|\xi_1-\xi_2|\geq1$ we have
	$$
	|\xi_1-\xi||\xi_1-\xi_2|\ll(|\xi_1-\xi|+|\xi_1-\xi_2|+|\xi|)^{2-\alpha}\lesssim|\xi_1-\xi||\xi_1-\xi_2|+|\xi|^{2-\alpha} ,
	$$
	consequently $|\xi_1-\xi||\xi_1-\xi_2|\lesssim |\xi|^{2-\alpha}$, thus \eqref{eq2lemma5} is bounded by
	\begin{equation}
	\sup_{\xi}\int_{B\cap E}\; \dfrac{\langle\xi\rangle^{2s+2a}}{\langle\xi_1\rangle^{2s}\langle\xi_2\rangle^{2s}\langle\xi-\xi_1+\xi_2\rangle^{2s}}d\xi_1 d\xi_2,\nonumber
	\end{equation}
	where $E:=\left\lbrace|\xi_1-\xi||\xi_1-\xi_2|\lesssim |\xi|^{2-\alpha}\right\rbrace$. 
	
	\medskip
	On the other hand, observe that for all $M\in\R^+$, we have
	\begin{equation}
	\sup_{|\xi|\leq M}\int_{B\cap E}\; \dfrac{\langle\xi\rangle^{2s+2a}}{\langle\xi_1\rangle^{2s}\langle\xi_2\rangle^{2s}\langle\xi-\xi_1+\xi_2\rangle^{2s}}d\xi_1 d\xi_2,\nonumber
	\end{equation}
	is bounded. In particular we choose $M\in\R^+$ big enough, then it is sufficient to prove that
	\begin{equation}
	\sup_{|\xi|>M}\int_{B\cap E}\; \dfrac{\langle\xi\rangle^{2s+2a}}{\langle\xi_1\rangle^{2s}\langle\xi_2\rangle^{2s}\langle\xi-\xi_1+\xi_2\rangle^{2s}}d\xi_1 d\xi_2,\label{eq3lemma5}
	\end{equation}
	is finite. 
	
	\medskip
	To prove \eqref{eq3lemma5}, first observe that, the condition on the integral together with $|\xi|>M$  implies,  that $|\xi_1-\xi|\ll|\xi| $ and $|\xi_1-\xi_2|\ll|\xi| $. Therefore \eqref{eq3lemma5} is bounded by
	\begin{equation}
	\sup_{|\xi|>M}\int_{B\cap E}\; \dfrac{\langle\xi\rangle^{2s+2a}}{\langle\xi\rangle^{2s}\langle\xi\rangle^{2s}\langle\xi\rangle^{2s}}d\xi_1 d\xi_2\lesssim\sup_{|\xi|>M}\langle\xi\rangle^{2a-4s}\int_{B\cap E}\; \dfrac{\langle\xi\rangle^{(2-\alpha)(1+\epsilon)}}{\langle\xi_1-\xi\rangle^{1+\epsilon}\langle\xi_1-\xi_2\rangle^{1+\epsilon}}d\xi_1 d\xi_2,\nonumber
	\end{equation}
	in the last bound we used $|\xi_1-\xi||\xi_1-\xi_2|\lesssim |\xi|^{2-\alpha}$. Then using  Lemma \ref{Lemma1} we obtain
	\begin{equation}
	\sup_{|\xi|>M}\langle\xi\rangle^{2a-4s}\int_{B\cap E}\; \dfrac{\langle\xi\rangle^{(2-\alpha)(1+\epsilon)}}{\langle\xi_1-\xi\rangle^{1+\epsilon}\langle\xi_1-\xi_2\rangle^{1+\epsilon}}d\xi_1 d\xi_2\lesssim\sup_{|\xi|>M}\langle\xi\rangle^{2a-4s+(2-\alpha)(1+\epsilon)},\nonumber
	\end{equation}
	which is finite for $a<\frac{\alpha-2+4s}{2}$
	
	\medskip
	\textbf{Case II:} The set is the complement of $|\xi_1-\xi||\xi_1-\xi_2|\ll(|\xi_1-\xi|+|\xi_1-\xi_2|+|\xi|)^{2-\alpha}$. First, observe that \eqref{eq1lemma5} is bounded by
	\begin{equation}
	\sup_{\xi}\int_{B\cap F}\; \dfrac{\langle\xi\rangle^{2s+2a}{(|\xi_1-\xi|+|\xi_1-\xi_2|+|\xi|)^{(2-\alpha)(1-6\epsilon)}}}{\langle\xi_1\rangle^{2s}\langle\xi_2\rangle^{2s}\langle\xi-\xi_1+\xi_2\rangle^{2s}\left\lbrace|\xi_1-\xi||\xi_1-\xi_2|\right\rbrace^{1-6\epsilon} }\;d\xi_1 d\xi_2,\label{eq4lemma5}
	\end{equation}
	where $F:=\left\lbrace|\xi_1-\xi||\xi_1-\xi_2|\ll(|\xi_1-\xi|+|\xi_1-\xi_2|+|\xi|)^{2-\alpha}\right\rbrace^{c}$.
	
	On the other hand observe that $F\subseteq \left\lbrace|\xi_1-\xi||\xi_1-\xi_2|\gtrsim|\xi|^{2-\alpha} \right\rbrace$, consequently \eqref{eq4lemma5} is bounded by
	\begin{equation}
	\sup_{\xi}\int_{B\cap G}\; \dfrac{\langle\xi\rangle^{2s+2a}{(|\xi_1-\xi|+|\xi_1-\xi_2|+|\xi|)^{(2-\alpha)(1-6\epsilon)}}}{\langle\xi_1\rangle^{2s}\langle\xi_2\rangle^{2s}\left\lbrace|\xi_1-\xi||\xi_1-\xi_2|\right\rbrace^{1-6\epsilon}}\;d\xi_1 d\xi_2,\label{eq5lemma5}
	\end{equation}
	where $G:=\left\lbrace|\xi_1-\xi||\xi_1-\xi_2|\gtrsim|\xi|^{2-\alpha} \right\rbrace$. Then 
	by symmetry, it is enough to consider $|\xi_1-\xi_2|\geq|\xi_1-\xi|$.
	
	\medskip
	On the other hand, we define $H:=B\cap G\cap \left\lbrace|\xi_1-\xi_2|\geq|\xi_1-\xi|\right\rbrace$ therefore from \eqref{eq5lemma5}, it is enough to study
	\begin{equation}
	\sup_{\xi}\int_{H}\; \dfrac{\langle\xi\rangle^{2s+2a}{(|\xi_1-\xi_2|+|\xi|)^{(2-\alpha)(1-6\epsilon)}}}{\langle\xi_1\rangle^{2s}\langle\xi_2\rangle^{2s}\langle\xi-\xi_1+\xi_2\rangle^{2s}\left\lbrace|\xi_1-\xi||\xi_1-\xi_2|\right\rbrace^{1-6\epsilon} }\;d\xi_1 d\xi_2,\label{eq6lemma5}
	\end{equation}
	
	Since $|\xi_1-\xi|\geq1$, $|\xi_1-\xi_2|\geq1$ we have $|\xi_1-\xi||\xi_1-\xi_2|\backsim\langle\xi_1-\xi\rangle\langle\xi_1-\xi_2\rangle$, thus \eqref{eq6lemma5} is equivalent to
	\begin{equation}
	\sup_{\xi}\int_{H}\; \dfrac{\langle\xi\rangle^{2s+2a}{(|\xi_1-\xi_2|+|\xi|)^{(2-\alpha)(1-6\epsilon)}}}{\langle\xi_1\rangle^{2s}\langle\xi_2\rangle^{2s}\langle\xi-\xi_1+\xi_2\rangle^{2s}\left\lbrace\langle\xi_1-\xi\rangle\langle\xi_1-\xi_2\rangle\right\rbrace^{1-6\epsilon} }\;d\xi_1 d\xi_2,\label{eq7lemma5}
	\end{equation}
	
	Now to estimate \eqref{eq7lemma5}, we break the integral into  three  region
	$|\xi_1-\xi_2|\approx|\xi|$, $|\xi_1-\xi_2|\ll|\xi|$ and $|\xi|\ll|\xi_1-\xi_2|$.
	
	\medskip
	\textbf{Region 1:} $|\xi_1-\xi_2|\approx|\xi|$,
	\begin{equation}
	\sup_{\xi}\int_{H_1}\; \dfrac{\langle\xi\rangle^{2s+2a}{(|\xi_1-\xi_2|+|\xi|)^{(2-\alpha)(1-6\epsilon)}}}{\langle\xi_1\rangle^{2s}\langle\xi_2\rangle^{2s}\langle\xi-\xi_1+\xi_2\rangle^{2s}\left\lbrace\langle\xi_1-\xi\rangle\langle\xi_1-\xi_2\rangle\right\rbrace^{1-6\epsilon} }\;d\xi_1 d\xi_2,\label{eq8lemma5}
	\end{equation}
	where $H_1:=H\cap \left\lbrace|\xi_1-\xi_2|\approx|\xi|\right\rbrace$. Then \eqref{eq8lemma5} is equivalent to
	\begin{equation}
	\sup_{\xi}\int_{H_1}\; \dfrac{\langle\xi\rangle^{2s+2a+(2-\alpha)(1-6\epsilon)-(1-6\epsilon)}}{\langle\xi_1\rangle^{2s}\langle\xi_2\rangle^{2s}\langle\xi-\xi_1+\xi_2\rangle^{2s}\langle\xi_1-\xi\rangle^{1-6\epsilon}}\;d\xi_1 d\xi_2,\label{eq9lemma5}
	\end{equation}
	
	Now, we have to consider the cases $ \frac{2-\alpha}{4}<s\leq\frac{1}{4}$ and $ \frac{1}{4}<s<\frac{1}{2}$ separately. 
	
	\medskip
	\textbf{Case 1}: $ \frac{2-\alpha}{4}<s\leq\frac{1}{4}$. 
	
	First observe that \eqref{eq9lemma5} is bounded by
	\begin{equation}
	\sup_{\xi}\int_{\xi_1}\; \dfrac{\langle\xi\rangle^{2s+2a-(\alpha-1)(1-6\epsilon)}}{\langle\xi_1\rangle^{2s}\langle\xi_1-\xi\rangle^{1-6\epsilon}}\;\left\lbrace\int_{|\xi_1-\xi_2|\approx|\xi|}\dfrac{1}{\langle\xi_2\rangle^{2s}\langle\xi-\xi_1+\xi_2\rangle^{2s}} d\xi_2\right\rbrace\;d\xi_1.\label{eq10lemma5}
	\end{equation}
	
	On the other hand, using Lemma \ref{Lema0} we obtain that \eqref{eq10lemma5} is bounded by
	\begin{equation}
	\sup_{\xi}\int_{\xi_1}\; \dfrac{\langle\xi\rangle^{2s+2a-(\alpha-1)(1-6\epsilon)}\langle\xi\rangle^{1-4s}}{\langle\xi_1\rangle^{2s}\langle\xi_1-\xi\rangle^{1-6\epsilon}}\;d\xi_1=\sup_{\xi}\int_{\xi_1}\; \dfrac{\langle\xi\rangle^{-2s+2a-(\alpha-1)(1-6\epsilon)+1}}{\langle\xi_1\rangle^{2s}\langle\xi_1-\xi\rangle^{1-6\epsilon}}\;d\xi_1,\label{eq11lemma5}
	\end{equation}
	
	Finally applying Lemma \ref{Lemma1} we obtain that \eqref{eq11lemma5} is bounded by
	\begin{equation}
	\sup_{\xi}\int_{\xi_1}\; \dfrac{\langle\xi\rangle^{-2s+2a-(\alpha-1)(1-6\epsilon)+1}}{\langle\xi_1\rangle^{2s}\langle\xi_1-\xi\rangle^{1-6\epsilon}}\;d\xi_1\lesssim\sup_{\xi}\; \langle\xi\rangle^{-2s+2a-(\alpha-1)(1-6\epsilon)+1-2s+6\epsilon},
	\end{equation}
	
	which is finite for $a<\frac{4s+\alpha-2}{2}$.
	
	\medskip
	\textbf{Case 2}:  $ \frac{1}{4}<s<\frac{1}{2}$ 
	
	\medskip
	Here we use Lemma \ref{Lemma1} in the variable $\xi_2$, then \eqref{eq9lemma5} is bounded by
	\begin{equation}
	\begin{aligned}
	\sup_{\xi}\int_{\xi_1}\; \dfrac{\langle\xi\rangle^{2s+2a+(2-\alpha)(1-6\epsilon)-(1-6\epsilon)}}{\langle\xi_1\rangle^{2s}\langle\xi-\xi_1\rangle^{4s-1}\langle\xi_1-\xi\rangle^{1-6\epsilon}}\;d\xi_1& 
	=\sup_{\xi}\int_{\xi_1}\; \dfrac{\langle\xi\rangle^{2s+2a-(\alpha-1)(1-6\epsilon)}}{\langle\xi_1\rangle^{2s}\langle\xi-\xi_1\rangle^{4s-6\epsilon}}\;d\xi_1
	\\[5pt]
	&\lesssim\sup_{\xi}\langle\xi\rangle^{2s+2a-(\alpha-1)(1-6\epsilon)}\langle\xi\rangle^{-2s}
	\end{aligned}\nonumber
	\end{equation}
	where the last inequality we use again  Lemma \ref{Lemma1} in the variable $\xi_1$. Therefore it is finite for $a<\frac{\alpha-1}{2}$.
	
	\medskip
	\textbf{Region 2:} $|\xi_1-\xi_2|\ll|\xi|$.
	\begin{equation}
	\sup_{\xi}\int_{H_2}\; \dfrac{\langle\xi\rangle^{2s+2a}{(|\xi_1-\xi_2|+|\xi|)^{(2-\alpha)(1-6\epsilon)}}}{\langle\xi_1\rangle^{2s}\langle\xi_2\rangle^{2s}\langle\xi-\xi_1+\xi_2\rangle^{2s}\left\lbrace\langle\xi_1-\xi\rangle\langle\xi_1-\xi_2\rangle\right\rbrace^{1-6\epsilon} }\;d\xi_1 d\xi_2,\label{eq12lemma5}
	\end{equation}
	where $H_2:=H\cap \left\lbrace|\xi_1-\xi_2|\ll|\xi|\right\rbrace$. Then \eqref{eq12lemma5} is equivalent to
	\begin{equation}
	\sup_{\xi}\int_{H_2}\; \dfrac{\langle\xi\rangle^{2s+2a+(2-\alpha)(1-6\epsilon)}}{\langle\xi_1\rangle^{2s}\langle\xi_2\rangle^{2s}\langle\xi-\xi_1+\xi_2\rangle^{2s}\langle\xi_1-\xi\rangle^{1-6\epsilon}\langle\xi_1-\xi_2\rangle^{1-6\epsilon}}\;d\xi_1 d\xi_2,\label{eq13lemma5}
	\end{equation}
	On the other hand, since $|\xi_1-\xi_2|\ll|\xi|$, we have
	$$
	|\xi|\lesssim |\xi-\xi_1+\xi_2|
	,\;\;
	|\xi|\lesssim |\xi_2|,\;\;|\xi|\lesssim|\xi_1|.
	$$
	Thus \eqref{eq13lemma5} is bounded by
	\begin{equation}
	\sup_{\xi}\int_{H_2}\; \dfrac{\langle\xi\rangle^{2a-4s+(2-\alpha)(1-6\epsilon)}}{\langle\xi_1-\xi\rangle^{1-6\epsilon}\langle\xi_1-\xi_2\rangle^{1-6\epsilon}}\;d\xi_1\,d\xi_2\lesssim\sup_{\xi}\int_{H_2}\; \dfrac{\langle\xi\rangle^{2a-4s+(2-\alpha)(1-6\epsilon)+14\epsilon}}{\langle\xi_1-\xi\rangle^{1+\epsilon}\langle\xi_1-\xi_2\rangle^{1+\epsilon}}\;d\xi_1\,d\xi_2.\nonumber
	\end{equation}
	
	In the last estimative is due to $|\xi_1-\xi_2|\ll|\xi|$ and $|\xi_1-\xi|\ll|\xi|$. Then using  Lemma \ref{Lemma1} we obtain a new limitation by
	$$
	\sup_{\xi}\langle\xi\rangle^{2a-4s+2-\alpha+14\epsilon-6(2-\alpha)\epsilon}
	$$
	which is finite for $a<\frac{4s+\alpha-2}{2}$.
	
	\medskip
	\textbf{Region 3:} $|\xi|\ll|\xi_1-\xi_2|$.
	\begin{equation}
	\sup_{\xi}\int_{H_3}\; \dfrac{\langle\xi\rangle^{2s+2a}{(|\xi_1-\xi_2|+|\xi|)^{(2-\alpha)(1-6\epsilon)}}}{\langle\xi_1\rangle^{2s}\langle\xi_2\rangle^{2s}\langle\xi-\xi_1+\xi_2\rangle^{2s}\left\lbrace\langle\xi_1-\xi\rangle\langle\xi_1-\xi_2\rangle\right\rbrace^{1-6\epsilon} }\;d\xi_1 d\xi_2,\label{eq14lemma5}
	\end{equation}
	where $H_3:=H\cap \left\lbrace|\xi|\ll|\xi_1-\xi_2|\right\rbrace$. Then \eqref{eq14lemma5} is equivalent to
	\begin{equation}
	\sup_{\xi}\int_{H_3}\; \dfrac{\langle\xi\rangle^{2s+2a}\langle \xi_1-\xi_2\rangle^{(2-\alpha)(1-6\epsilon)}}{\langle\xi_1\rangle^{2s}\langle\xi_2\rangle^{2s}\langle\xi-\xi_1+\xi_2\rangle^{2s}\langle\xi_1-\xi\rangle^{1-6\epsilon}\langle\xi_1-\xi_2\rangle^{1-6\epsilon}}\;d\xi_1 d\xi_2,\label{eq15lemma5}
	\end{equation}
	
	On the other hand, since $|\xi|\ll|\xi_1-\xi_2|$, we have $|\xi|\lesssim|\xi-\xi_1+\xi_2|$. Thus \eqref{eq15lemma5} is bounded by
	\begin{equation}
	\begin{aligned}
	\sup_{\xi}\int_{H_3}\; \dfrac{\langle\xi\rangle^{2s+2a}\langle \xi_1-\xi_2\rangle^{(1-\alpha)(1-6\epsilon)-2s}}{\langle\xi_1\rangle^{2s}\langle\xi_2\rangle^{2s}\langle\xi_1-\xi\rangle^{1-6\epsilon}}&\;d\xi_1 d\xi_2
	\\[7pt]
	=\sup_{\xi}\int_{H_3}\; &\dfrac{\langle\xi\rangle^{2s+2a}\langle \xi_1-\xi_2\rangle^{2-\alpha-4s+6\alpha\epsilon}}{\langle\xi_1\rangle^{2s}\langle\xi_2\rangle^{2s}\langle\xi_1-\xi_2\rangle^{1-2s+6\epsilon}\langle\xi_1-\xi\rangle^{1-6\epsilon}}\;d\xi_1 d\xi_2,
	\end{aligned}
	\nonumber
	\end{equation}
	
	since $\frac{2-\alpha}{4}<s$ and $|\xi|\ll|\xi_1-\xi_2|$ we bounded the above by
	\begin{equation}
	\sup_{\xi}\int_{H_3}\; \dfrac{\langle\xi\rangle^{2a+2-\alpha-2s+6\alpha\epsilon}}{\langle\xi_1\rangle^{2s}\langle\xi_2\rangle^{2s}\langle\xi_1-\xi_2\rangle^{1-2s+6\epsilon}\langle\xi_1-\xi\rangle^{1-6\epsilon}}\;d\xi_1 d\xi_2,\label{eq16lemma5}
	\end{equation}
	Finally, we use  Lemma \ref{Lemma1} in the variable $\xi_2$ and taking into count that $s<\frac{1}{2}$, we bounded  \eqref{eq16lemma5} by
	\begin{equation}
	\sup_{\xi}\int_{\xi_1}\; \dfrac{\langle\xi\rangle^{2a+2-\alpha-2s+6\alpha\epsilon}}{\langle\xi_1\rangle^{2s+6\epsilon}\langle\xi_1-\xi\rangle^{1-6\epsilon}}\;d\xi_1\lesssim
	\sup_{\xi}\;\langle\xi\rangle^{2a+2-\alpha-4s+6\alpha\epsilon}
	\end{equation}
	which is finite for $a<\frac{4s+\alpha-2}{2}$.

%%%%%%%%%%%%%%%%%%%%%%

\end{document}